\documentclass[11pt,reqno]{amsart}

\usepackage[T1]{fontenc}
\usepackage{lmodern}
\usepackage{mathrsfs}
\usepackage{dsfont}
\usepackage{enumerate}
\usepackage[a4paper,margin=1in]{geometry}
\usepackage[colorlinks=true,linkcolor=blue,citecolor=red,urlcolor=blue]{hyperref}

\allowdisplaybreaks

\theoremstyle{plain}
\newtheorem{theorem}{Theorem}[section]
\newtheorem{lemma}[theorem]{Lemma}
\newtheorem{proposition}[theorem]{Proposition}
\newtheorem{corollary}[theorem]{Corollary}
\theoremstyle{definition}
\newtheorem{question}{Question}

\newcommand{\RR}{\mathbb{R}}
\newcommand{\ZZ}{\mathbb{Z}}
\newcommand{\NN}{\mathbb{N}}
\newcommand{\EE}{\mathbb{E}}

\newcommand{\PP}{\mathbb{P}}
\newcommand{\FF}{\mathcal{F}}
\newcommand{\cH}{\mathcal{H}}
\newcommand{\F}{\mathscr{F}}
\newcommand{\1}{\mathds{1}}
\newcommand{\Osc}{\operatorname{Osc}}

\begin{document}

\author[Shuo Qin]{Shuo Qin}
\address[Shuo Qin]{Beijing Institute of Mathematical Sciences and Applications, and Yau Mathematical Sciences Center, Tsinghua University}
\email{qinshuo@bimsa.cn}

\title{Cover times and ranges of elephant random walks}
\date{}

\begin{abstract}
We study cover times on discrete tori and ranges on $\ZZ^d$ for elephant random walks with memory parameter $p\in[0,1)$. In dimension one there is a phase transition at $p=3/4$. Away from criticality we determine the exact first-order asymptotics of the mean cover time, while at $p=3/4$ the mean is of order $L^2/\sqrt{\log L}$, a factor $\sqrt{\log L}$ larger than the natural fluctuation scale $L^2/\log L$. We identify the limiting distribution under each of the three natural fluctuation normalizations. In dimensions $d\geq2$, for every fixed $p<1$, the cover time has the same order as for simple random walk, in expectation and with high probability. The leading constants also agree when $p<(2d+1)/(4d)$, and at equality when $d\geq3$. For the range on $\ZZ^d$, we prove $L^1$ convergence to the simple random walk asymptotics when $d=2$ and $p<5/8$, and for every $p<1$ when $d\geq3$. Finally, we give a cover-time upper bound for generalized step-reinforced random walks on finite groups in terms of a conditional $L^\infty$ mixing profile.
\end{abstract}

\maketitle

\section{Introduction}
\label{secintro}

The elephant random walk (ERW) on $\ZZ$ was introduced by Sch\"utz and Trimper \cite{schutz2004elephants} as a simple non-Markovian random walk with long memory. Its extension to $\ZZ^d$ was studied by Bercu and Laulin \cite{MR3962977}. The walk starts at the origin. Its first increment is chosen uniformly from the $2d$ nearest-neighbor directions. At each subsequent time, the elephant recalls one of its previous increments uniformly at random. With probability $p\in[0,1]$, it repeats that increment, and with probability $1-p$ it chooses uniformly from the other $2d-1$ directions.

Write $\widetilde S$ for this walk on $\ZZ^d$. For an integer $L\geq2$, the ERW on the discrete torus
\[
    \ZZ_L^d:=(\ZZ/L\ZZ)^d
\]
is its canonical projection $S_n=\widetilde S_n\pmod L$. We study its cover time
\[
    \tau_{\rm cov}:=\inf\{n\geq0:\{S_0,S_1,\ldots,S_n\}=\ZZ_L^d\}.
\]
The case $p=1$ is degenerate: all increments equal the first one, so $\tau_{\rm cov}=L-1$ for $d=1$ and the walk never covers $\ZZ_L^d$ for $d\geq2$. We therefore assume $p<1$ throughout. It is often more convenient to use the reinforcement parameter 
\begin{equation}
    \label{defalpha}
    \alpha:=\frac{2dp-1}{2d-1}.
\end{equation}
In particular, $\alpha=2p-1$ in dimension one.

The cover time behaves quite differently in dimension one and in higher dimensions.
In dimension one, both the mean cover time (Theorem \ref{thmcover}) and its natural fluctuation scale (Proposition \ref{prop1dlimit}) depend on the memory parameter.

\begin{theorem}
\label{thmcover}
Let $S$ be an elephant random walk on $\ZZ_L$ with memory parameter $p\in[0,1)$. There exist constants $C_-(p),C_+(p)\in(0,\infty)$ such that, as $L\to\infty$,
\[
\EE\tau_{\rm cov}\sim\begin{cases}
C_-(p)L^2,&p<3/4,\\
C_+(p)L^{5-4p},&p>3/4,
\end{cases}
\]
whereas
\[
\EE\tau_{\rm cov}=\Theta\left(\frac{L^2}{\sqrt{\log L}}\right),\qquad p=3/4.
\]
\end{theorem}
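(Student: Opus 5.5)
In dimension one the torus $\ZZ_L$ is covered exactly when the range of the lifted walk $\widetilde S$ on $\ZZ$ reaches $L$. So I would write
\[
\tau_{\rm cov}=\inf\Bigl\{n:\max_{k\le n}\widetilde S_k-\min_{k\le n}\widetilde S_k\ge L-1\Bigr\},
\]
and reduce the whole question to how fast the range of a one-dimensional ERW grows. I would use the standard martingale representation. Put $a_n=\Gamma(n)\Gamma(1+\alpha)/\Gamma(n+\alpha)\asymp n^{-\alpha}$. Then $M_n=a_n\widetilde S_n$ is a martingale whose increments are bounded by $a_n$. The three regimes come from $\sum_n a_n^2$:
\begin{itemize}
\item for $\alpha<1/2$ ($p<3/4$) it diverges like $n^{1-2\alpha}$;
\item for $\alpha=1/2$ it is logarithmic;
\item for $\alpha>1/2$ it converges.
\end{itemize}
From this I get the three functional limits:
\begin{itemize}
\item for $p<3/4$, $\widetilde S_{\lfloor nt\rfloor}/\sqrt n$ converges to a Gaussian process $W_t=t^{\alpha}B_{t^{1-2\alpha}}/\sqrt{1-2\alpha}$;
\item for $p=3/4$, $\widetilde S_{\lfloor nt\rfloor}/\sqrt{n\log n}\Rightarrow \sqrt t\,N$ with a single Gaussian $N$ (critical scaling);
\item for $p>3/4$, $\widetilde S_n/n^{\alpha}\to \Lambda$ a.s. and in $L^2$, with $\Lambda$ nondegenerate and $\PP(\Lambda=0)=0$.
\end{itemize}

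\textbf{Case $p<3/4$.} Let $R_n$ denote the range of $\widetilde S$ up to time $n$. By Brownian scaling, $\PP(\tau_{\rm cov}>sL^2)=\PP(R_{sL^2}<L)\to \PP(\Osc_{[0,s]}W<1)$, where $\Osc$ is the oscillation of the limiting process. This gives $\tau_{\rm cov}/L^2\Rightarrow T:=\inf\{s:\Osc_{[0,s]}W\ge1\}$. Convergence of means needs uniform integrability of $\tau_{\rm cov}/L^2$, which I would obtain from an exponential tail $\PP(\tau_{\rm cov}>kL^2)\le e^{-ck}$. For that I would split time into blocks $[jL^2,(j+1)L^2]$. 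Conditionally on the past, the martingale increment accumulated in $[t,2t]$ has variance of order $t$, uniformly in $t\ge L^2$, because $\sum_{t}^{2t}a_n^2/a_{2t}^2\asymp t$. A conditional Paley--Zygmund / Kolmogorov-type inequality then shows that each dyadic block has probability at least $c>0$ of producing a displacement larger than $L$. There are about $\log k$ dyadic blocks before time $kL^2$, which is too few for an exponential bound. So I would instead use blocks of length $L^2$ after time $t\ge L^2$; the conditional variance of each such block is still at least $cL^2$ uniformly. This yields a geometric tail, and $C_-(p)=\EE T$.

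\textbf{Case $p>3/4$.} Here $\widetilde S_n\approx \Lambda n^{\alpha}$ with a fluctuation of order $\sqrt n=o(n^\alpha)$, so the range grows like $|\Lambda| n^{\alpha}$. Covering therefore happens at $n\approx (L/|\Lambda|)^{1/\alpha}$, giving $\tau_{\rm cov}L^{-1/\alpha}\to|\Lambda|^{-1/\alpha}$. The mean is then $\EE|\Lambda|^{-1/\alpha}\,L^{1/\alpha}$, but $1/\alpha=(1)/(2p-1)$ does not match $5-4p$. This tells me the mean is dominated by the rare event that $|\Lambda|$ is small. The exponent $5-4p=2+(3-4p)\cdot(-1)$ suggests the mechanism: with probability of order $L^{-(2\alpha-1)/\alpha\cdot\ldots}$, the walk stays diffusive up to time $\sim L^2$, and when that happens covering costs $L^2$. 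I would therefore compute
\[
\EE\tau_{\rm cov}=\int\PP(\tau_{\rm cov}>s)\,ds,
\]
using the local behaviour of the density of $\Lambda$ near $0$. The density is positive and continuous at $0$. The event $\{|\Lambda|<\varepsilon\}$ corresponds to the process not having drifted, and its contribution scales so that the integral concentrates at $s\asymp L^2$ with weight $L^{3-4p}$. Concretely, I would condition on $\mathcal F_m$ and use $\Lambda-a_m\widetilde S_m\approx$ Gaussian with variance $\sum_{n>m}a_n^2\asymp m^{1-2\alpha}$. This gives $\PP(\tau_{\rm cov}>s)\approx \PP(|\Lambda|\lesssim L s^{-\alpha})$ for $s\ll L^2$, and a diffusive regime for $s\gtrsim L^2$. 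Integrating yields $L^2\cdot L^{1-2\alpha}\cdot$const$=C_+L^{3-2\alpha}=C_+L^{5-4p}$; the integral is dominated by $s\asymp L^2$. To obtain the exact constant I would show $\PP(\tau_{\rm cov}>sL^2)\sim L^{1-2\alpha}g(s)$ with an explicit scaling function $g$ built from a Gaussian process with zero terminal drift, and then justify dominated convergence.

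\textbf{Case $p=3/4$.} With the critical scaling $\sqrt{n\log n}\,|N|$, covering occurs around $n\asymp L^2/(N^2\log L)$. The mean involves $\EE N^{-2}$, which is infinite. Truncating at the time $L^2$, where the diffusive regime takes over, gives $\EE\min(N^{-2}/\log L,1)\asymp 1/\sqrt{\log L}$. This produces $\Theta(L^2/\sqrt{\log L})$.

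\textbf{Main obstacle.} The hard step is the $p>3/4$ constant, and similarly the $\Theta$ bounds at $p=3/4$. Both need sharp small-ball estimates for $\Lambda$, uniformly conditional on $\mathcal F_m$, together with a coupling to the Gaussian scaling process. I would handle these through a conditional local CLT for the martingale tail $\sum_{n>m}(M_{n}-M_{n-1})$.
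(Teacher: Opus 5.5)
Your treatment of $p<3/4$ is sound and follows the paper's structure: a functional CLT plus continuous mapping gives the law of $\tau_{\rm cov}/L^2$, and an exponential tail gives uniform integrability. The paper cites the exit-time tail of Andr\'e--Zuazn\'abar for that tail. Your block argument is a valid self-contained substitute once made precise. Given $\FF_m$, $\widetilde S_{m+n}$ is an $\FF_m$-measurable shift plus $(M_{m+n}-M_m)/a_{m+n}$. That term has conditional variance at least $cn$, and by Burkholder its conditional fourth moment is at most a constant times the squared variance, so Paley--Zygmund gives $|\widetilde S_{m+n}|\ge L$ with conditional probability at least $c$ when $n=CL^2$. (At $p=0$ your $\Gamma(1+\alpha)$ normalization is undefined, so the martingale must be started at time $2$.)

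The genuine gaps are at $p=3/4$ and $p>3/4$, which is where the theorem's content lies. At $p=3/4$ the relevant events have probability of order $(\log L)^{-1/2}\to0$, so nothing follows from the weak limit $\sqrt t\,N$, and your truncation $\EE\min(N^{-2}/\log L,1)$ remains a heuristic.
\begin{itemize}
\item The upper bound needs a uniform local estimate $\sup_y\PP(\widetilde S_n=y)\le C/\sqrt{n\log n}$. The paper gets it from log-concavity of the law of $\widetilde S_n$ given $X_1$, together with $\operatorname{Var}\widetilde S_n\sim n\log n$.
\item The lower bound needs $\PP(\sigma_m\ge m^2)\ge c/\sqrt{\log m}$, which is a statement about the whole path staying in $(-m,m)$. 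In the intrinsic clock $\log k$ of $M_k=a_k\widetilde S_k$, this is an exponentially shrinking tube $|M_k|\lesssim m/\sqrt k$. The paper embeds $M$ in Brownian motion (Fang), controls the embedding times $T_k-T_\ell\approx\log(k/\ell)$ conditionally, and proves a Brownian tube estimate of order $t^{-1/2}$ with $t\asymp\log m$.
\end{itemize}
The tool you name, a conditional local CLT for a martingale tail, is unavailable here: $\sum a_n^2$ diverges, so there is no terminal variable.

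For $p>3/4$ your order of magnitude is right, but the integral concentrates at $s\asymp L^2$. There the approximation $\PP(\tau_{\rm cov}>s)\approx\PP(|\Lambda|\lesssim Ls^{-\alpha})$ retains only its order, because the drift $\Lambda s^\alpha$ and the diffusive fluctuations are both of size $L$. The profile $g$ therefore requires the scaling limit of the whole path on $[0,sL^2]$ under the rare event $\{\Lambda=O(L^{1-2\alpha})\}$. A conditional CLT for $\Lambda-M_m$ given $\FF_m$ conditions in the wrong direction. Inverting it is a Doob transform by the conditional density of $\Lambda$ at $0$ given $\FF_k$. You would have to construct and control that density (existence, continuity, uniform integrability) for every $k\le sL^2$, including early times where the conditional law is far from Gaussian.

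The paper avoids this with a change of measure to the diffusive ERW with complementary parameter $\bar\alpha=1-\alpha$. One can check that $n^{\delta/2}e^{-\delta\widetilde S_n^2/(2n)}$, with $\delta=2\alpha-1$, is up to constants the Gaussian approximation of your $h$-transform. So this is the rigorous form of your heuristic.
\begin{itemize}
\item The paper shows that $\mathcal Z_n=n^{\delta/2}\widetilde{\mathscr R}_n e^{-\delta\widetilde S_n^2/(2n)}$, with $\widetilde{\mathscr R}_n=d\PP^{(\alpha)}/d\PP^{(\bar\alpha)}$ on $\FF_n$, converges almost surely and in $L^1(\PP^{(\bar\alpha)})$. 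The proof is a uniform $L^q$ bound that rests on the cancellation $\alpha+\bar\alpha=1$.
\item A stable functional CLT under $\PP^{(\bar\alpha)}$ then makes $\mathcal Z_\infty$ asymptotically independent of the rescaled path, which yields $\mathcal H^{\rm cov}_\alpha$.
\item Dominated convergence also needs $\sup_y\PP(\widetilde S_n=y)\le Cn^{-\alpha}$ to control $t\le1$, which your sketch does not supply. The geometric tail beyond $L^2$ does follow from your block argument.
\end{itemize}
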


For any function $f$ in the Skorokhod space $D([0,\infty))$ and $t\geq0$, write
\[
\Osc_t(f):=\sup_{0\leq s\leq t}f(s)-\inf_{0\leq s\leq t}f(s),
\]
and, for $\lambda>0$, set
\begin{equation}
\label{defPhi}
\Phi(\lambda,f):=\inf\{t\geq0:\Osc_t(f)\geq\lambda\}.
\end{equation}

\begin{proposition}
\label{prop1dlimit}
Let $S$ be as in Theorem~\ref{thmcover}, and let $(B_t)_{t\geq0}$ be a standard Brownian motion.
\begin{enumerate}[(i)]
\item If $p<3/4$, then as $L\to\infty$,
\[
\frac{\tau_{\rm cov}}{L^2}\Longrightarrow\Phi(1,\widehat B),
\]
where the noise-reinforced Brownian motion $\widehat B$ has the same law as
\[
\left(\frac{t^{2p-1}}{\sqrt{3-4p}}B_{t^{3-4p}}\right)_{t\geq0},
\]
with value $0$ at $t=0$ by continuous extension.
\item If $p=3/4$, then as $L\to\infty$,
\[
\frac{\tau_{\rm cov}\log L}{L^2}\Longrightarrow\frac{1}{2B_1^2}.
\]
\item If $p>3/4$, let
\[
Y:=\lim_{n\to\infty}\frac{\widetilde S_n}{n^{2p-1}},
\]
which exists and is non-zero almost surely. If the walks for different $L$ are obtained by projecting the same lifted walk, then
\[
\lim_{L\to\infty}\frac{\tau_{\rm cov}}{L^{1/(2p-1)}}=\frac{1}{|Y|^{1/(2p-1)}}
\qquad\text{almost surely}.
\]
\end{enumerate}
\end{proposition}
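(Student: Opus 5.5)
The starting point is that in dimension one the cover time of $\ZZ_L$ is a functional of the lifted walk alone: $\tau_{\rm cov}$ is the first $n$ with $\max_{k\le n}\widetilde S_k-\min_{k\le n}\widetilde S_k\geq L-1$. In the notation of \eqref{defPhi}, and treating $\widetilde S$ as a step function,
\[
\tau_{\rm cov}=\Phi(L-1,\widetilde S).
\]
So each part reduces to a functional limit theorem for $\widetilde S$ under the appropriate scaling, combined with continuity of the hitting functional $\Phi$. I will use the known functional limit theorems for the one-dimensional ERW (Baur--Bertoin, Coletti--Gava--Sch\"utz, Bercu). These follow from the martingale $M_n=a_n\widetilde S_n$ with $a_n=\Gamma(n)/\Gamma(n+\alpha)\asymp n^{-\alpha}$ and the martingale functional CLT.

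\emph{Part (i).} For $p<3/4$, i.e.\ $\alpha<1/2$, the invariance principle gives
\[
\bigl(\widetilde S_{\lfloor L^2 t\rfloor}/L\bigr)_{t\ge0}\Longrightarrow \widehat B
\]
in $D([0,\infty))$. The map $f\mapsto \Phi(1,f)$ is continuous at every continuous $f$ for which $t\mapsto\Osc_t(f)$ is strictly increasing at its first passage of level $1$, i.e.\ $\Osc$ exceeds $1$ immediately after $\Phi(1,f)$. This holds a.s.\ for $\widehat B$, because $\widehat B$ is a deterministic time change and rescaling of Brownian motion on $(0,\infty)$, and Brownian motion instantly exceeds its running max and min. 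It also helps that $\Osc_t$ is monotone in $t$: the events $\{\Phi\le t\}$ and $\{\Osc_t\ge 1\}$ coincide, so it is enough to have convergence of $\Osc_t$ at fixed $t$. The replacement of $L-1$ by $L$ and the discrete rounding are negligible. This yields $\tau_{\rm cov}/L^2\Rightarrow\Phi(1,\widehat B)$.

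\emph{Part (ii).} At $\alpha=1/2$ one has $\mathrm{Var}(\widetilde S_n)\sim n\log n$. The known functional result is
\[
\left(\frac{\widetilde S_{\lfloor n^t\rfloor}}{\sqrt{n^t\log n}}\right)_{t}\Longrightarrow (W_t)_t,
\]
the time change making the limit essentially $W_t=B_t/\sqrt t\cdot$const. The more useful fact is that at this scale the walk looks ballistic in $\sqrt{\cdot}$: on the relevant window, $\widetilde S_n\approx \sqrt{n\log n}\,G$ with a single Gaussian $G$ that is frozen on multiplicative windows $n\in[m,Km]$. The reason is that the martingale $M_n$ has quadratic variation $\sim\log n$ accumulated over all scales, so increments over $[m,Km]$ contribute only $\log K/\log n\to0$.

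Concretely, put $n=sL^2/\log L$. Then
\[
\widetilde S_n\approx \sqrt{n\log n}\,G_n\approx L\sqrt{2s}\,G,
\]
using $\log n\approx2\log L$ and $G\sim N(0,1/2)$ once the constants are tracked; to be checked is that $\mathrm{Var}(\widetilde S_n)\sim n\log n$ exactly at $\alpha=1/2$. The path up to time $n$ is then $\sqrt{k\log n}\,G$ for $k\le n$, so the oscillation is attained essentially at one end and equals $L\sqrt{2s}|G|$, up to the path on the early scales, which is $o(L)$. Setting $L\sqrt{2s}\,|G|=L$ gives $s=1/(2G^2)$. Matching the limit in the statement, $1/(2B_1^2)$, requires normalising so that $\widetilde S_n/\sqrt{n\log n}\to N(0,1)$ in law, which is the known CLT at criticality. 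The key step is therefore a lemma: for fixed $K$,
\[
\sup_{n/K\le k\le n}\left|\frac{\widetilde S_k}{\sqrt{k\log k}}-\frac{\widetilde S_n}{\sqrt{n\log n}}\right|\to0
\]
in probability, and $\max_{k\le n/K}|\widetilde S_k|\le \epsilon\sqrt{n\log n}$ with high probability. I would prove both via Doob's inequality applied to $M$ on $[n/K,n]$, whose quadratic variation increment is $O(\log K)$ against a total of order $\log n$. This lemma is the main obstacle: one has to verify that the relative fluctuation over a window is $o(1)$ while also controlling the early-time oscillation.

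\emph{Part (iii).} For $\alpha>1/2$, $M_n$ is $L^2$-bounded, so $\widetilde S_n/n^{\alpha}\to Y$ a.s., with $Y\ne0$ a.s.\ (known result, Bercu / Baur--Bertoin). On the event $Y\ne0$, $\widetilde S_n=Yn^\alpha(1+o(1))$. Hence the running range is $\max_{k\le n}|\widetilde S_k|+O(1)$ on one side and $o(n^\alpha)$ on the other, i.e.\ $\Osc_n\sim|Y|n^\alpha$. Inverting the relation $\Osc_{\tau}\approx L$ gives $\tau_{\rm cov}\sim (L/|Y|)^{1/\alpha}$ almost surely. The monotonicity of $\Osc$ justifies the inversion of the asymptotic equivalence.
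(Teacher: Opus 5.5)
Your proposal is correct. For (i) and (iii) it is essentially the paper's argument. The paper also writes $\tau_{\rm cov}=\Phi(1-1/L,\cdot)$ of the rescaled lifted path and concludes by continuity of $\Phi$. In (i) it uses the Baur--Bertoin invariance principle; in (iii) it uses the almost sure locally uniform convergence $\widetilde S_{\lfloor nt\rfloor}/n^{2p-1}\to t^{2p-1}Y$. Your alternative finishes are equally valid: in (i) the identity $\{\tau_{\rm cov}>n\}=\{\Osc_n(\widetilde S)<L-1\}$, and in (iii) direct inversion using $\Osc_{\tau_{\rm cov}}=L-1$, since the range grows by at most one per step.

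The genuine difference is (ii). The paper cites the critical functional limit theorem of Majumdar and Maulik, $(\widetilde S_{\lfloor nt\rfloor}/\sqrt{n\log n})_{t\ge0}\Rightarrow(\sqrt t\,B_1)_{t\ge0}$ in $D([0,\infty))$. It takes $n(L)=\lfloor L^2/\log L\rfloor$, so that $\sqrt{n(L)\log n(L)}/L\to\sqrt2$, and applies the continuous mapping theorem. You instead derive the needed consequence by hand from $M_n=a_n\widetilde S_n$. Doob's inequality on $[n/K,n]$, where $\sum a_k^2$ grows by order $\log K$ against $\log n$, freezes $\widetilde S_k/\sqrt{k\log k}$ on the window. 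Combined with the one-dimensional CLT $\widetilde S_n/\sqrt{n\log n}\Rightarrow N(0,1)$ and monotonicity of $\Osc$, this gives $\PP(\tau_{\rm cov}\log L/L^2>s)\to\PP(\sqrt{2s}\,|B_1|<1)$. The paper's route is shorter given the citation. Yours is self-contained and shows why the limit path is a single Gaussian frozen across scales, namely $\sqrt t\,B_1$.

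A few slips to fix when writing it out:
\begin{itemize}
\item With your normalization $G\sim N(0,1)$, not $N(0,1/2)$; you correct this yourself at the end.
\item The log-time limit of $\widetilde S_{\lfloor n^t\rfloor}/\sqrt{n^t\log n}$ is a standard Brownian motion, not $B_t/\sqrt t$, though you never use it.
\item The early-time bound does not come from Doob on $[n/K,n]$. Use Doob on $[1,n/K]$ together with $1/a_k\le C\sqrt k$ and $\EE M_n^2\le C\log n$. This gives $\PP(\max_{k\le n/K}|\widetilde S_k|>\epsilon\sqrt{n\log n})\le C/(K\epsilon^2)$, so $K$ must be chosen large depending on $\epsilon$ before letting $n\to\infty$, not held fixed.
\item In (i), $\widehat B_t=t^{2p-1}B_{t^{3-4p}}/\sqrt{3-4p}$ has a time-dependent spatial factor. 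So immediate crossing after the first-passage time, which is a stopping time, needs the strong Markov property plus the law of the iterated logarithm; the factor contributes only an $O(s)$ drift. This is what the paper means by a diffusion with nonzero Brownian coefficient away from $0$.
\end{itemize}
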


For $p<3/4$, we will show in (\ref{subcritmeanconstant}) that the constant $C_-(p)$ in Theorem~\ref{thmcover} is
\begin{equation}
\label{subcritmeanconstantintro}
C_-(p)=\EE\Phi(1,\widehat B).
\end{equation}
An integral expression for $C_+(p)$ is given in Proposition~\ref{thmsuperprofiles}.
At and above the critical parameter, the limiting random variables in Proposition~\ref{prop1dlimit} have infinite mean.
In the superdiffusive case, this follows from the positivity of the density of $Y$ at the origin \cite[Theorem 1.3]{MR5079601}.
The fluctuation limits therefore do not determine the first-order asymptotics of the mean cover time.
Its larger order comes from rare paths that remain inside an interval of diameter comparable to $L$ for times of order $L^2$.
We estimate the probabilities of such paths in Section~\ref{secexitd1}.

For $d\geq2$, the cover time has the same order as for simple random walk throughout $p\in[0,1)$. The leading constant is unchanged in the diffusive regime, and also at criticality when $d\geq3$. For fixed $d\geq2$, put
\begin{equation}
    \label{defHdLtLstar}
    H(d,L):=\begin{cases}
L^2\log^2L,&d=2,\\
L^d\log L,&d\geq3,
\end{cases}
\qquad
t_L^\star:=\begin{cases}
\displaystyle\frac4\pi L^2\log^2L,&d=2,\\[1mm]
\displaystyle g_d(0)L^d\log(L^d),&d\geq3,
\end{cases}
\end{equation}
where $g_d(0)$ is the Green function at the origin for simple random walk on $\ZZ^d$, $d\geq3$.

\begin{theorem}
\label{thmhighd}
Let $S$ be an elephant random walk on $\ZZ_L^d$ with fixed $d\geq2$ and $p\in[0,1)$.
\begin{enumerate}[(i)]
\item If $\alpha<1/2$, or if $d\geq3$ and $\alpha=1/2$, then
\begin{equation}
\label{sharpcoverLone}
\lim_{L\to\infty}\frac{\tau_{\rm cov}}{t_L^\star}=1\qquad\text{in }L^1.
\end{equation}
\item For every $p\in[0,1)$, $\EE\tau_{\rm cov}=\Theta(H(d,L))$. Moreover, there exist positive constants $\kappa(p,d)$, $\widetilde\kappa(p,d)$ and $\delta(p,d)$ such that
\begin{equation}
\label{tauconcend23}
\sup_{L\geq2}L^{\delta(p,d)}\PP\left(\tau_{\rm cov}\leq\kappa(p,d)H(d,L)\text{ or }\tau_{\rm cov}\geq\widetilde\kappa(p,d)H(d,L)\right)<\infty.
\end{equation}
\end{enumerate}
\end{theorem}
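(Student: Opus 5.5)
The plan is to compare the elephant random walk in $d\geq2$ with simple random walk through its step-reinforced structure. Given the "innovation" times, i.e.\ the times at which a fresh uniform direction is drawn rather than a recalled increment being repeated or negated, the walk is a sum of increments in which each fresh direction is reused a random number of times. We can realize the ERW as follows. At time $n$, with probability $1-\beta$ (with $\beta$ related to $\alpha$ by \eqref{defalpha}, essentially $\beta=\alpha$ after symmetrization of the "other directions" choice) draw a fresh uniform direction. Otherwise copy a uniformly chosen past increment. Up to the usual rewriting of the $1-p$ branch, this gives a generalized step-reinforced random walk. Conditional on the genealogy (a random recursive forest), $S_n$ is a sum over clusters of i.i.d.\ uniform directions, each weighted by its current cluster size. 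The cover-time machinery for $d\ge 2$ is then the standard one: a Matthews-type upper bound via hitting-time estimates, and a lower bound via a second-moment count of late points, as in Dembo--Peres--Rosen--Zeitouni for $d=2$ and Aldous/Lawler for $d\geq3$. Both require only (a) local CLT/Green function estimates for the walk started at an arbitrary time $m$ and run for time of order $L^2$ or beyond, and (b) mixing on the torus after time $\gg L^2$.

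The key steps, in order, are these.

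\textbf{Step 1 (conditional heat kernel).} Show that for $m\geq 1$ and $n-m\ge L^{2}$, the law of $S_n$ given $\F_m$ is within a small $L^\infty$ ratio of uniform on $\ZZ_L^d$, except on a $\F_m$-event of polynomially small probability. This is the conditional $L^\infty$ mixing profile mentioned in the abstract. In the diffusive regime $\alpha<1/2$ the cluster sizes of increments after time $m$ have second moment $\sum_k s_k^2$ of order $n$, so the conditional covariance is $\asymp n$. For $\alpha\ge1/2$ it is still $\gtrsim n$, but the largest clusters carry more weight.

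\textbf{Step 2 (Green function / hitting estimates).} Establish that the expected number of visits to $x$ during a window of length $t$ started at an arbitrary time $m\gg1$ matches simple random walk's $g_d(0)$, respectively $\frac{2}{\pi}\log L$-type asymptotics in $d=2$, up to $1+o(1)$ when $\alpha<1/2$ (or $\alpha=1/2$, $d\ge3$), and up to constants for all $p<1$. This uses the local CLT on short scales: for $\alpha<1/2$, the increments after a large time $m$ behave locally like an i.i.d.\ walk, because recalling a recent increment has probability $O(1/m)$.

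\textbf{Step 3 (upper bound).} Combine Steps 1--2 via Matthews' argument: $\PP(x\text{ not hit by } t)\le e^{-t/(t^\star_L/\log(L^d))(1-o(1))}$ by concatenating mixing blocks. A union bound gives $\tau_{\rm cov}\le(1+\epsilon)t_L^\star$ with probability $1-L^{-\delta}$, and an exponential tail above this gives $L^1$ control.

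\textbf{Step 4 (lower bound).} Use a second-moment method on well-separated points, using the same Green asymptotics and approximate independence of hitting events from Step 1. This shows that at time $(1-\epsilon)t_L^\star$ many points are unvisited, with probability $1-L^{-\delta}$. For (ii) only constants are needed, so cruder versions suffice for all $p<1$.

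The main obstacle is Step 2 at sharp-constant level. The reinforcement correlates local excursions, and one must show that this does not change the Green function at the origin. I would first bound the covariance contribution of recalled increments by $\sum_{k}(k/n)^{2\alpha-1}$-type sums. These converge to a negligible correction exactly when $\alpha<1/2$, or are logarithmic at $\alpha=1/2$, which is harmless only when $d\geq3$, since $g_d$ is then dominated by short times. This is precisely the threshold in (i). In $d=2$ at $\alpha=1/2$ the logarithmic correction competes with $\log^2 L$, explaining its exclusion.
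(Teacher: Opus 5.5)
Your Steps 2--3 for $d\ge3$ are essentially the paper's Proposition~\ref{propsharphighdupper}: late blocks of length $L^2\log L$, near-uniform direction frequencies, coupling of the first $R$ steps with simple random walk, and the forest heat-kernel bound for longer returns. The exponential upper tail that gives uniform integrability is Proposition~\ref{prophighduppertail}.

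The genuine gap is Step~4. In $d=2$ a second moment over well-separated points cannot give the constant $4/\pi$, even for simple random walk. Hitting events on the planar torus are log-correlated: for two points at distance $L^{1-\beta}$ one has $\EE_\pi\tau_{\{x,y\}}\approx\frac{1+\beta}{2}\EE_\pi\tau_x$. So at times of order $t_L^\star$ the ratio $\PP(x,y\text{ unvisited})/\PP(x\text{ unvisited})^2$ grows like a positive power of $L$. A Matthews-type bound over such a net gives at most about $\frac1\pi L^2\log^2L$. The sharp planar lower bound of Dembo, Peres, Rosen and Zeitouni uses a multiscale excursion count that relies on the strong Markov property at every scale, and nothing in your Steps 1--2 replaces it for the elephant walk.

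In $d\ge3$, the second moment needs the conditional per-block hitting probabilities to be accurate up to relative error $o(1/\log N)$, uniformly over the past. Mixing plus Green-function asymptotics do not give this for a non-Markovian walk. Your explanation of the threshold is also off. In $d\ge3$ the Green comparison only involves returns within boundedly many steps at late times, where $\Delta_m/m\to0$ for every $\alpha<1$, so Steps 1--2 hold for every $\alpha\in[0,1)$. If Step~4 followed from them as you claim, you would get the sharp constant for all such $\alpha$ in $d\ge3$. The paper leaves that open (Question~\ref{queshighdcoverlimit}); there, the threshold $\alpha=1/2$ comes from the lower bound, not from the Green function.

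The idea you are missing is relative entropy. By \eqref{eqonestepentropy} and the chain rule, $D(P_{0,n}\Vert Q_n)\le C\sum_{k<n}\EE\|\Delta_k\|^2/k^2$, which is $O(\log n)$, $O(\log^2 n)$ or $O(n^{2\alpha-1})$ (Lemma~\ref{lemDPal0}(i)). Combined with \eqref{relaentroine} and the super-polynomially small SRW lower-deviation bounds \eqref{LDPcovSRWtorilt}, this gives $\PP(\tau_{\rm cov}\le\gamma t_L^\star)\le CL^{-c}$ for every $\gamma<1$ when $\alpha\le1/2$, and for some fixed $\gamma>0$ for every $\alpha<1$. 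That is the lower half of (i) and of (ii), including the lower-tail part of \eqref{tauconcend23}, with no correlation analysis of the elephant walk at all.

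The same device replaces your sharp planar Step~2. As stated, that step would need conditional return probabilities $(1+o(1))/(\pi j)$ at a stopping time for all $j\le L^2$. Covariance bounds do not give pointwise return probabilities, and the forest bound \eqref{eqERWtoriheatk} only gives $C/j$. Instead, for $\alpha<1/2$, Lemma~\ref{lemDPal0}(ii) bounds the entropy of the increments on $[\varepsilon t_L^\star/2,(1+\varepsilon)t_L^\star]$ by a constant, so \eqref{LDPcovSRWtoriut} transfers directly. At $\alpha=1/2$ this window bound grows like $\log L$, which is why $d=2$, $\alpha=1/2$ is excluded; that exclusion comes from the upper bound.

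Finally, your copy-or-fresh representation with $\beta\in[0,1]$ is unavailable for $\alpha<0$, since negative reinforcement is not a mixture of copying and fresh draws. The paper handles $\alpha\le0$ in Proposition~\ref{prophighduppertail} with the conditional entropy bound of Lemma~\ref{lemDPal0}(iii).
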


We also study the number of distinct sites visited by the walk before projection.
The one-dimensional distributional limits are given in Corollary~\ref{cor1drangelimit}.
Let
\[
\mathcal R_n:=\{\widetilde S_0,\ldots,\widetilde S_n\},
\quad \text{ and } \quad
\gamma_d:=\frac1{g_d(0)},\ \text{ for } d\geq 3.
\]
The constant $\gamma_d$ is the probability that simple random walk on $\ZZ^d$, started at the origin, never returns there after time zero. The following theorem gives the same range asymptotics as for simple random walk.

\begin{theorem}
\label{thmrange}
Let $\widetilde S$ be an elephant random walk on $\ZZ^d$ with fixed memory parameter $p\in[0,1)$.
\begin{enumerate}[(i)]
\item If $d=2$ and $p<5/8$, then
\[
\lim_{n\to\infty}\frac{\log n}{n}|\mathcal R_n|=\pi
\qquad\text{in }L^1.
\]
\item If $d\geq3$, then
\[
\lim_{n\to\infty}\frac{|\mathcal R_n|}{n}=\gamma_d
\qquad\text{in }L^1.
\]
\end{enumerate}
\end{theorem}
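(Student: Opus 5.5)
Since $|\mathcal R_n|/n\le1$ in $d\ge3$, and $|\mathcal R_n|\log n/n$ will be shown uniformly integrable in $d=2$ (via a second-moment bound), $L^1$ convergence reduces to convergence of the first moment plus concentration. I would prove convergence in probability by controlling the variance. The plan is to write $|\mathcal R_n|=\sum_{k=0}^n \1\{\widetilde S_j\ne\widetilde S_k \text{ for all } k<j\le n\}$ (last-exit decomposition). I would then compare each summand with the corresponding non-return probability of simple random walk over a window of length $m=m(n)$ with $m\to\infty$ but $m=o(n)$, or $m=n^{1-\epsilon}$.

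The key structural input is that after a large time $k$, the increments of the ERW over a window $[k,k+m]$ with $m\ll k$ are nearly i.i.d. Conditionally on $\mathcal F_k$, each new increment is drawn from the empirical distribution of past increments, tilted by $\alpha$. This distribution equals $(1-\alpha)\cdot\text{uniform}+\alpha\cdot(\text{empirical frequencies})$. The empirical frequencies are uniform up to an error of order $k^{\alpha-1}$ (if $\alpha>1/2$), $k^{-1/2}\sqrt{\log k}$ (if $\alpha=1/2$), or $k^{-1/2}$ (if $\alpha<1/2$), and they barely move during the window. So the window increments form an i.i.d. walk with drift $\mu_k$ of size $|\mu_k|\lesssim k^{\max(\alpha-1,-1/2)+o(1)}$, coupled with error $O(m^2/k)$ in total variation.

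In $d\ge3$, the probability of no return within $m$ steps for a walk with small drift $\mu$ and near-uniform steps tends to $\gamma_d$ as $m\to\infty$ and $\mu\to0$, by transience and continuity of the Green function. Returns after time $m$ are controlled by a local CLT bound $\PP(\widetilde S_{k+j}=\widetilde S_k)\lesssim j^{-d/2}$, whose sum over $j>m$ vanishes. This bound itself would come from the same window coupling applied on dyadic scales. It yields $\EE|\mathcal R_n|/n\to\gamma_d$ for every $p<1$.

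For the variance, I would show that the last-exit indicators at times $k$ and $k'$ with $|k-k'|\gg m$ are asymptotically independent: the window increments are conditionally nearly independent given the slowly varying empirical frequencies, so $\mathrm{Var}(|\mathcal R_n|)=o(n^2)$.

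In $d=2$, the recurrence makes the non-return probability up to time $n$ of order $\pi/\log n$, and this requires the window to have length $n^{1-o(1)}$, not $o(n)$. Here the drift matters: over a window of length $m$ the drift displacement $m|\mu_k|$ must be small compared with the diffusive scale $\sqrt m$. The drift displacement is $m k^{\alpha-1}$ against $\sqrt m$, which forces $m\ll k^{2-2\alpha}$. Taking $m=n^{1-o(1)}$ then requires $2-2\alpha>1$, i.e.\ $\alpha<1/2$, which for $d=2$ means $p<5/8$. This matches the hypothesis of part (i). Under it, I would use the two-dimensional escape estimate $\PP(\text{no return by } m)\sim\pi/\log m$ for a small-drift i.i.d. walk, with $\log m\sim\log n$. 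The late returns, occurring between $m$ and $n$, contribute $o(1/\log n)$ because $\sum_{m<j\le n}j^{-1}=o(\log n)$ when $m=n^{1-\epsilon}$, after letting $\epsilon\to0$. The second moment would be handled by Jain–Pruitt style decorrelation, as for simple random walk. Uniform integrability follows from a bound $\EE|\mathcal R_n|^2\lesssim (n/\log n)^2$.

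The main obstacle is making the coupling with a drifted i.i.d. walk quantitative enough in $d=2$. Logarithmic accuracy is needed there, and the self-interaction between the window increments and the empirical frequencies is a problem for windows of length comparable to $n^{1-\epsilon}$, where the total-variation error $m^2/k$ fails. Instead of total variation, I would use the martingale representation of the ERW: $\widetilde S_n=a_n M_n$ with $M_n$ a martingale whose conditional covariance is $I/d+O(\text{drift})$. I would then apply a local limit theorem to martingales with almost deterministic quadratic variation to get $\PP(\widetilde S_{k+j}=x\mid\mathcal F_k)$ uniformly. The escape probabilities would then follow by the standard last-exit/Green function identity.
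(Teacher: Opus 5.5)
Your plan has a genuine gap in each dimension, and both sit at the quantitative comparison step.

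\textbf{The case $d\geq3$.} Your skeleton is essentially Lemma~\ref{lemrangetransient}: last-exit decomposition, truncation at a fixed lag $K$, a conditional step law tending to uniform because $\Delta_n/n\to0$ almost surely, and decorrelation of the truncated indicators. The decorrelation there is in fact exact, since the centred indicators $Y_i^{(K)}-\EE(Y_i^{(K)}\mid\FF_i)$ are orthogonal at lags $\geq K$.

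What fails is your source for the summable bound $\sup_i\PP(\widetilde S_{i+k}=\widetilde S_i)\leq Ck^{-d/2}$. The window pieces are adapted, not independent, so a frozen-law coupling can only use the last window before the target time. Conditioning on $\FF_t$ at its start $t$ discards the smoothing from earlier windows, and freezing over length $w$ costs $w^2/t$ in total variation. The resulting bound is of order $w^{-d/2}+w^2/t$, which is at best about $t^{-d/(d+4)}$. That is not summable in the lag for any $d$, and dyadic iteration does not change it.

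The paper proves Lemma~\ref{lemrangeheatkernel} from a different idea:
\begin{enumerate}[(a)]
\item realize the walk on the full recursive parent tree with independent marks;
\item observe that the future leaves carry directions that, given their parents' directions, are i.i.d.\ with law $\nu_s$ and enter no other increment;
\item show, via a fresh comparison tree and McDiarmid's inequality, that there are at least $k/3$ such leaves except with probability $e^{-k/18}$, uniformly in $\FF_m$;
\item apply Esseen's concentration bound to $\nu_s^{*h}$ with $h\geq k/(6d)$.
\end{enumerate}
Your fallback, a martingale local limit theorem uniform over the conditioning, is not available off the shelf. Ellipticity of the conditional steps alone gives no heat-kernel bound for an adapted non-Markov walk, which could be steered back to its starting point.

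\textbf{The case $d=2$.} You need windows of length $n^{1-o(1)}$ at times of order $n$, where, as you note, the $m^2/k$ error is useless. What you put in its place is not supplied: a sharp conditional local limit theorem, a Green-function identity (which for this non-Markov walk needs two-sided control of expected returns after stopping times), and Jain--Pruitt second-moment bounds, all with the exact constant $\pi$. Also, a union bound over late returns gives about $(\varepsilon/\pi)\log n$, not $o(1/\log n)$.

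The missing idea is to compare with the \emph{uniform} i.i.d.\ law by relative entropy, not with a frozen drifted walk in total variation. By the chain rule and \eqref{eqonestepentropy}, the per-step deviations enter squared, so for $\alpha<1/2$
\[
D(P_{m,\ell}\Vert Q_\ell)\leq 2d\alpha^2\sum_{k=m}^{m+\ell-1}\frac{\EE\|\Delta_k\|^2}{k^2}\leq C\log\Bigl(1+\frac{\ell}{m}\Bigr).
\]
This stays bounded even when $\ell\asymp m$, and it is exactly where $p<5/8$ enters. The rest then goes as follows:
\begin{enumerate}[(a)]
\item Bounded entropy on $[\varepsilon n,n]$, together with \eqref{relaentroine}, transfers the Dvoretzky--Erd\H{o}s convergence in probability and gives $\PP((\log n)|\mathcal R_n|/n<\pi-\eta)\to0$.
\item Pinsker's inequality on blocks of length $n/(\log n)^6$, started after time $n/(\log n)^2$, gives total variation $O((\log n)^{-2})$ per block. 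Subadditivity of the range then yields $\limsup_n(\log n/n)\EE|\mathcal R_n|\leq\pi$.
\item With $Z_n:=(\log n)|\mathcal R_n|/n\geq0$, a lower bound in probability and an upper bound on the mean already give $L^1$ convergence, through $\EE|Z_n-\pi|=\EE Z_n-\pi+2\EE(\pi-Z_n)_+$.
\end{enumerate}
So no variance estimate, uniform integrability or local limit theorem is needed.
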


In dimensions $d\geq3$, the limiting rate of discovery of new sites is therefore independent of $p$, including in the critical and superdiffusive regimes. 

The higher-dimensional proofs compare the increments on suitable time intervals with those of simple random walk.
Relative entropy estimates allow us to transfer its cover-time and range asymptotics in the diffusive regime.
Bounds on return probabilities and on the upper tail of the cover time control the expectations.
For ranges in dimensions at least three, summable return bounds also allow us to treat the critical and superdiffusive regimes.

Section~\ref{secnotation} collects the preliminary estimates.
Sections~\ref{secexitd1} and \ref{secHighD} prove the cover-time results, and Section~\ref{secrange} treats ranges.
Section~\ref{secfinitegroup} extends the conditional hitting argument to obtain cover-time bounds for generalized step-reinforced random walks on finite groups.

\section{Notation and preliminary estimates}
\label{secnotation}

Throughout, $\NN:=\{0,1,2,\ldots\}$ and $[n]:=\{1,2,\ldots,n\}$ for $n\geq1$. Let $e_1,\ldots,e_d$ be the coordinate vectors of $\ZZ^d$. We retain the direction labels $X_n\in\{\pm e_1,\ldots,\pm e_d\}$ before projecting onto the torus, and write
\[
\widetilde S_n:=\sum_{j=1}^n X_j,\qquad S_n:=\widetilde S_n\pmod L,
\qquad \widetilde S_0=S_0=0.
\]
 Let $u_n$ be uniform on $[n-1]$, independently for $n\geq2$. The first direction $X_1$ is uniform. At time $n\geq2$, the walk repeats $X_{u_n}$ with probability $p$ and otherwise chooses uniformly from the other $2d-1$ directions. We use the filtration
\[
\FF_n:=\sigma(X_1,\ldots,X_n),\qquad n\geq1,
\]
and take $\FF_0$ to be trivial. For $y\in\ZZ_L^d$, let
\[
\tau_y:=\inf\{n\geq0:S_n=y\},\qquad
\tau_{\rm cov}=\max_{y\in\ZZ_L^d}\tau_y.
\]
The symbols $C(a_1,\ldots,a_k)$ and $c(a_1,\ldots,a_k)$ denote positive constants depending only on the displayed parameters. Their values may change from line to line. We use $\Longrightarrow$ for convergence in distribution.

For each direction $e\in\{\pm e_1,\ldots,\pm e_d\}$, define
\begin{equation}
\label{defNni}
N_n(e):=\sum_{j=1}^n\1_{\{X_j=e\}},\qquad
\Delta_n(e):=N_n(e)-\frac{n}{2d}.
\end{equation}
Let $\Delta_n$ be the $2d$-dimensional vector $(\Delta_n(e))$ with Euclidean norm $\|\Delta_n\|$. By the definition of the walk,
\begin{equation}
\label{PXn1}
\begin{aligned}
\PP(X_{n+1}=e\mid\FF_n)
&=p\frac{N_n(e)}n+\frac{1-p}{2d-1}\left(1-\frac{N_n(e)}n\right)\\
&=\frac1{2d}+\frac{\alpha\Delta_n(e)}n,
\end{aligned}
\end{equation}

When it is useful to display the parameter, we write $\PP^{(\alpha)}$ for the law of the direction sequence and the resulting walk. Thus $\PP^{(0)}$ denotes simple random walk, corresponding to $p=1/(2d)$ rather than $p=0$. 

We shall use the following estimates from \cite[Lemma 2.5]{peres2026transition}. Set
\[
V_\alpha(n):=\begin{cases}
n,&\alpha<1/2,\\
n\log(n+1),&\alpha=1/2,\\
n^{2\alpha},&\alpha>1/2.
\end{cases}
\]
\begin{lemma}
\label{empiconvunifprop}
For fixed $p\in[0,1)$ and $d\geq1$, there exist $c,C>0$ such that, for every $n\geq1$ and $v\geq0$,
\begin{equation}
\label{directioncounttail}
\PP(\|\Delta_n\|\geq v)\leq C\exp\left(-\frac{cv^2}{V_\alpha(n)}\right),
\end{equation}
and
\begin{equation}
\label{directioncountmoment}
\EE\|\Delta_n\|^2\leq C V_\alpha(n).
\end{equation}
\end{lemma}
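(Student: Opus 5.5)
The plan is to treat $\Delta_n$ as a $2d$-dimensional martingale-type recursion and prove a sub-Gaussian bound by an exponential supermartingale argument, with the variance proxy $V_\alpha(n)$ coming from the linear drift $\alpha\Delta_n/n$ in \eqref{PXn1}. Write $\xi_{n+1}(e):=\1_{\{X_{n+1}=e\}}-\PP(X_{n+1}=e\mid\FF_n)$. By \eqref{PXn1},
\[
\Delta_{n+1}=\Big(1+\frac{\alpha}{n}\Big)\Delta_n+\xi_{n+1},
\]
where $\xi_{n+1}$ is a bounded martingale difference: $\|\xi_{n+1}\|\le 2$ and $\sum_e\xi_{n+1}(e)=0$. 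Put $a_n:=\prod_{j=1}^{n-1}(1+\alpha/j)$, so that $a_n\asymp n^{\alpha}$ for $\alpha>-1$; note $\alpha\ge -1/(2d-1)>-1$ since $p\ge0$. Then $M_n:=\Delta_n/a_n$ is a vector martingale with increments $\xi_{n+1}/a_{n+1}$. (If $\alpha=0$ there is nothing to prove beyond Hoeffding's inequality, and negative $\alpha$ is handled by the same formula.)

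First, the second-moment bound \eqref{directioncountmoment}. By orthogonality of martingale increments,
\[
\EE\|\Delta_n\|^2=a_n^2\sum_{k\le n}\EE\|\xi_k\|^2/a_k^2\le C n^{2\alpha}\sum_{k\le n}k^{-2\alpha}.
\]
The sum is $\asymp n^{1-2\alpha}$ for $\alpha<1/2$, $\asymp\log n$ for $\alpha=1/2$, and bounded for $\alpha>1/2$. These three cases give exactly $V_\alpha(n)$.

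Second, the tail bound \eqref{directioncounttail}. Apply the vector Azuma–Hoeffding inequality to $M_n$: for each fixed unit vector $u$, the process $\langle u,M_n\rangle$ has increments bounded by $2/a_{k}$. Hence
\[
\PP(|\langle u,M_n\rangle|\ge s)\le 2\exp\Big(-\frac{s^2}{8\sum_{k\le n}a_k^{-2}}\Big).
\]
Taking $u$ over the $2d$ coordinate directions and using $\|\Delta_n\|\le\sqrt{2d}\max_e|\Delta_n(e)|$, we get
\[
\PP(\|\Delta_n\|\ge v)\le 4d\exp\Big(-\frac{c v^2}{a_n^2\sum_{k\le n}a_k^{-2}}\Big).
\]
The denominator is exactly the quantity computed above, namely $\asymp V_\alpha(n)$, with constants depending only on $p$ and $d$. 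The constant $C$ absorbs the factor $4d$. For $v$ below $\sqrt{V_\alpha(n)}$ the bound is trivial once $C\ge e^{c}$, so no separate small-$v$ case is needed.

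The only step needing care is the product estimate $a_n\asymp n^{\alpha}$ uniformly in $n\ge1$. It follows from $\log(1+\alpha/j)=\alpha/j+O(j^{-2})$, which gives $\log a_n=\alpha\log n+O(1)$, with all factors positive because $\alpha>-1$. After that, comparing $\sum_{k\le n}k^{-2\alpha}$ with $n^{1-2\alpha}$, $\log(n+1)$, or a constant in the three regimes is routine, and it is the only place where the trichotomy in $V_\alpha$ appears. I expect no serious obstacle: the bounded-increment martingale structure from \eqref{PXn1} makes both estimates direct.
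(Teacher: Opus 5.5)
The paper gives no proof of this lemma; it imports it from \cite[Lemma 2.5]{peres2026transition}. Your argument is therefore a self-contained substitute rather than a reconstruction, and its route is sound. You unroll the linear recursion $\Delta_{n+1}=(1+\alpha/n)\Delta_n+\xi_{n+1}$ coming from \eqref{PXn1}, use orthogonality of the bounded martingale differences for the second moment, and apply Azuma--Hoeffding coordinatewise for the tail. One quantity, $a_n^2\sum_{k\leq n}a_k^{-2}\asymp n^{2\alpha}\sum_{k\leq n}k^{-2\alpha}$, serves both as the order of $\EE\|\Delta_n\|^2$ and as the sub-Gaussian variance proxy. This shows exactly where the trichotomy in $V_\alpha$ comes from. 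It is also consistent with the one-step identity \eqref{eqconditionalcountmoment} that the paper uses later.

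There is one slip. You assert $\alpha\geq-1/(2d-1)>-1$, but the lemma is stated for $d\geq1$, and for $d=1$, $p=0$ one has $\alpha=-1$. Then the factor $1+\alpha/1$ vanishes, so $a_n=0$ for all $n\geq2$ and $M_n=\Delta_n/a_n$ is undefined. Indeed $X_2=-X_1$ deterministically in that case. The repair is to avoid dividing. Set $\xi_1:=\Delta_1$, which is centered because $X_1$ is uniform. Then
\[
\Delta_n=\sum_{k=1}^n w_{n,k}\,\xi_k,\qquad w_{n,k}:=\prod_{j=k}^{n-1}\Bigl(1+\frac{\alpha}{j}\Bigr)\geq0 .
\]
For fixed $n$, the process $k\mapsto\sum_{i\leq k}w_{n,i}\xi_i$ is a martingale whose increments have norm at most $\sqrt2\,w_{n,k}$. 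Orthogonality and Azuma therefore apply with $\sum_k w_{n,k}^2$ in place of $a_n^2\sum_k a_k^{-2}$. For $\alpha>-1$ one has $w_{n,k}=a_n/a_k$, and your estimates are unchanged. For $\alpha=-1$ and $n\geq2$, one has $w_{n,1}=0$ and $w_{n,k}=(k-1)/(n-1)$ for $k\geq2$, so $\sum_k w_{n,k}^2\asymp n=V_\alpha(n)$. Equivalently, you could start the product defining $a_n$ at $j=2$. With this adjustment your proof is complete for the full stated range $p\in[0,1)$, $d\geq1$.
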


The cited lemma applies to the $2d$ direction labels, independently of their projection onto the torus.

When $\alpha\geq0$, \eqref{PXn1} gives the usual step-reinforced construction of elephant random walk. Independently of the variables $(u_n)$, take i.i.d. Bernoulli variables $(\xi_n)_{n\geq2}$ with parameter $\alpha$ and i.i.d. uniform directions $(g_n)_{n\geq1}$. Set $X_1=g_1$ and, for $n\geq2$, set $X_n=X_{u_n}$ if $\xi_n=1$ and $X_n=g_n$ otherwise. The associated forest $\F_n$ has an edge from $j$ to $u_j$ exactly when $\xi_j=1$. Write $\mathscr I_n$ for its isolated vertices.

We recall the conditional forest estimates from \cite[Lemma 2.2 and Proposition 2.3]{peres2026transition}. For a finite $\FF$-stopping time $\tau$ and $n\geq1$, put
\[
\mathscr I(\tau,n):=\mathscr I_{\tau+n}\cap\{\tau+1,\ldots,\tau+n\}.
\]
Condition on $\FF_\tau$, the forest up to time $\tau+n$, and all labels $g_j$ outside $\mathscr I(\tau,n)$ up to that time. Denote this information by $\cH_{\tau,n}$. There is a $\cH_{\tau,n}$-measurable remainder $R_{\tau,n}$ such that, on the torus,
\begin{equation}
\label{eqconditionalforestdecomp}
S_{\tau+n}-S_\tau
=\sum_{j\in\mathscr I(\tau,n)}g_j+R_{\tau,n}.
\end{equation}
Conditionally on $\cH_{\tau,n}$, the labels in this sum remain independent uniform directions. None of them occurs in the observed past or in another component of the forest. Moreover,
\begin{equation}
\label{eqestisover}
\PP\left(\left.|\mathscr I(\tau,n)|\leq\frac{(1-\alpha)n}{8}\,\right|\FF_\tau\right)
\leq2\exp\left(-\frac{(1-\alpha)n}{18}\right).
\end{equation}

We finish with an elementary observation that will be used in Sections~\ref{secHighD} and \ref{secfinitegroup}.
\begin{lemma}
\label{lemblockcover}
Let $Z$ be adapted to a filtration $(\mathcal A_m)_{m\geq0}$ and take values in a set $V$ with $N\geq2$ elements. Write $\tau_y^Z:=\inf\{m\geq0:Z_m=y\}$ and $\tau_{\rm cov}^Z:=\max_{y\in V}\tau_y^Z$. Suppose that an integer $K\geq1$ and $\rho\in(0,1]$ satisfy
\[
\PP\bigl(y\in\{Z_m,\ldots,Z_{m+K}\}\mid\mathcal A_m\bigr)\geq\rho
\]
almost surely for every deterministic $m\geq0$ and $y\in V$. Then, for every integer $j\geq0$,
\[
\PP(\tau_y^Z>jK)\leq(1-\rho)^j,\qquad
\PP(\tau_{\rm cov}^Z>jK)\leq\min\{1,N(1-\rho)^j\}.
\]
In particular, for an absolute constant $C$,
\[
\EE\tau_y^Z\leq\frac K\rho,\qquad
\EE\tau_{\rm cov}^Z\leq\frac{CK\log N}{\rho},
\]
and, for every $r\geq0$,
\[
\PP\left(\tau_{\rm cov}^Z>\frac{CK}{\rho}(\log N+r)\right)\leq e^{-r}.
\]
\end{lemma}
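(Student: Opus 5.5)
The proof runs the walk in consecutive blocks of length $K$ and applies the hypothesis at each block start via the tower property. For the first bound, fix $y$ and set $E_i:=\{y\notin\{Z_0,\ldots,Z_{iK}\}\}$, which lies in $\mathcal A_{iK}$. Since $\{Z_{iK},\ldots,Z_{(i+1)K}\}\subset\{Z_0,\ldots,Z_{(i+1)K}\}$,
\[
\PP(E_{i+1})=\EE\bigl[\1_{E_i}\PP(y\notin\{Z_0,\ldots,Z_{(i+1)K}\}\mid\mathcal A_{iK})\bigr]
\leq\EE\bigl[\1_{E_i}\PP(y\notin\{Z_{iK},\ldots,Z_{(i+1)K}\}\mid\mathcal A_{iK})\bigr]
\leq(1-\rho)\PP(E_i).
\]
Induction from $\PP(E_0)\leq1$ gives $\PP(\tau_y^Z>jK)=\PP(E_j)\leq(1-\rho)^j$. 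The cover-time bound then follows from $\{\tau^Z_{\rm cov}>jK\}=\bigcup_y\{\tau^Z_y>jK\}$, a union bound over the $N$ points, and the trivial bound $1$.

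For the expectations, use $\EE T=\sum_{n\geq0}\PP(T>n)\leq K\sum_{j\geq0}\PP(T>jK)$, grouping the $n$ into blocks $jK\leq n<(j+1)K$. For $\tau_y^Z$ this gives $K\sum_j(1-\rho)^j=K/\rho$. For $\tau^Z_{\rm cov}$, split the sum at $j_0:=\lceil\log N/\rho\rceil$. Using $(1-\rho)^j\leq e^{-\rho j}$, we have $N(1-\rho)^{j_0}\leq1$, so
\[
\sum_{j\geq0}\min\{1,N(1-\rho)^j\}\leq j_0+\sum_{i\geq0}(1-\rho)^i\leq\frac{\log N}{\rho}+1+\frac1\rho.
\]
Since $N\geq2$ gives $\log N\geq\log2$ and $\rho\leq1$, the right side is at most $C\log N/\rho$. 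Hence $\EE\tau^Z_{\rm cov}\leq CK\log N/\rho$.

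For the tail bound, take $j:=\lceil(\log N+r)/\rho\rceil$. Then $N(1-\rho)^j\leq Ne^{-\log N-r}=e^{-r}$, so $\PP(\tau^Z_{\rm cov}>jK)\leq e^{-r}$. It remains to check that $jK\leq\frac{CK}{\rho}(\log N+r)$. This holds because $j\leq(\log N+r)/\rho+1$ and $1\leq(\log N)/(\rho\log 2)$, again using $N\geq2$ and $\rho\leq1$. Enlarging the absolute constant $C$ if necessary covers both the expectation bound and the tail bound with the same $C$.

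There is no real obstacle. The only care needed is to condition at the deterministic times $iK$, which is exactly where the hypothesis applies, and to use $N\geq2$ to absorb the additive constants into $\log N$.
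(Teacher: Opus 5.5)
Your proof is correct and follows the same route as the paper. You iterate the block-avoidance bound at the deterministic times $iK$, take a union bound over $y$, group the tail sum into blocks of length $K$, and choose $j=\lceil(\log N+r)/\rho\rceil$ for the tail. The only difference is cosmetic: you bound $\sum_j\min\{1,N(1-\rho)^j\}$ by splitting at $j_0=\lceil\log N/\rho\rceil$, whereas the paper compares it with $\int_0^\infty\min\{1,Ne^{-\rho t}\}\,dt$, and both give $O(\log N/\rho)$.
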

\begin{proof}
On $\{\tau_y^Z>jK\}$, the conditional probability of avoiding $y$ for one more block is at most $1-\rho$. Iteration gives the first tail bound, and a union bound over $y$ gives the second. To sum the latter bound, use $(1-\rho)^j\leq e^{-\rho j}$ and
\begin{equation}
    \label{eqsummin1Nerho}
    \sum_{j\geq0}\min\{1,Ne^{-\rho j}\}
\leq1+\int_0^\infty\min\{1,Ne^{-\rho t}\}\,dt
=1+\frac{\log N+1}{\rho}.
\end{equation}
The expectation bounds follow by grouping the tail sum into blocks of length $K$. Taking $j=\lceil(\log N+r)/\rho\rceil$ proves the last assertion, with an absolute constant absorbing the rounding.
\end{proof}

\section{Cover times in dimension one}
\label{secexitd1}

In this section $d=1$, recall that $\alpha=2p-1$ and $\widetilde S$ denotes the lift defined in Section~\ref{secnotation}. The filtration $\FF_n=\sigma(X_1,\ldots,X_n)$ also equals $\sigma(\widetilde S_0,\ldots,\widetilde S_n)$. Every nearest-neighbor path visits all the integers between its running minimum and maximum. Hence
\begin{equation}
\label{coverrangeidentity}
\tau_{\rm cov}=\inf\left\{k\geq1:\max_{0\leq j\leq k}\widetilde S_j-\min_{0\leq j\leq k}\widetilde S_j\geq L-1\right\}.
\end{equation}
For a positive integer $m$, let
\begin{equation}
\label{defexittime}
\sigma_m:=\inf\{n\geq0:\widetilde S_n\notin(-m,m)\}
=\inf\{n\geq0:|\widetilde S_n|=m\}
\end{equation}
be the exit time from $(-m,m)$. The range identity implies
\begin{equation}
\label{equcovexit1D}
\sigma_{\lfloor L/2\rfloor}\leq\tau_{\rm cov}\leq\sigma_L.
\end{equation}
Andr\'e and Zuazn\'abar \cite{Andre2026} proved that, for $p<3/4$, $\EE\sigma_m\sim C(p)m^2$ for some positive constant $C(p)$. The endpoint $p=0$, although omitted from the statements of \cite[Proposition 1, Theorem 1]{Andre2026}, is covered by the same proof. We shall use their exponential tail estimate below and obtain the exact asymptotic constant for the cover time directly from the functional limit theorem.

For $p\geq3/4$, the corresponding exit-time result is as follows. In the superdiffusive case the constant is identified in \eqref{superexitmean}.

\begin{proposition}
\label{propcover1d}
Let $\widetilde S$ be an elephant random walk on $\ZZ$ with memory parameter $p\in[3/4,1)$, and let $\sigma_m$ be as in \eqref{defexittime}. The following estimates hold as $m\to\infty$.
\begin{enumerate}[(i)]
\item If $p=3/4$, then
\[
\EE\sigma_m=\Theta\left(\frac{m^2}{\sqrt{\log m}}\right).
\]
\item If $p>3/4$, then there exists $C_{\rm exit}(p)\in(0,\infty)$ such that
\[
\EE\sigma_m\sim C_{\rm exit}(p)m^{5-4p}.
\]
\end{enumerate}
\end{proposition}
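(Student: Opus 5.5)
The plan is to work with the tail sum $\EE\sigma_m=\sum_{n\geq0}\PP(\sigma_m>n)$ and the martingale structure of the one-dimensional walk. Put $a_n:=\Gamma(n+\alpha)/(\Gamma(n)\Gamma(1+\alpha))\asymp n^\alpha$. By \eqref{PXn1}, $M_n:=\widetilde S_n/a_n$ is a martingale, and its increments have conditional variance of order $a_n^{-2}$. Heuristically, confinement in $(-m,m)$ up to time $n$ costs two things. First, a ``drift'' factor: the variable $M$ must be small. Second, a diffusive factor, which is harmless for $n\lesssim m^2$ and exponentially small in $n/m^2$ beyond that. So the mean should be dominated by times $n\lesssim m^2$, weighted by the drift factor.

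For $p>3/4$ ($\alpha>1/2$) I will write $\widetilde S_n=a_nY-a_n(Y-M_n)$. The second term is a backward martingale tail with variance $\asymp a_n^2\sum_{k>n}a_k^{-2}\asymp n$, so it has diffusive size.

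At times $n=tm^2$ and with $Z:=m^{2\alpha-1}Y$, this suggests that $\widetilde S_{\lfloor tm^2\rfloor}/m\approx t^\alpha Z+W_t$ for a centered Gaussian process $W$ arising from the tail martingale. The target is the rescaled limit
\[
m^{2\alpha-1}\PP(\sigma_m>tm^2)\longrightarrow f(t):=\rho_Y(0)\int_{\RR}\PP\Bigl(\sup_{s\leq t}|s^\alpha z+W_s|<1\Bigr)\,dz .
\]
Here $\rho_Y(0)>0$ is the density of $Y$ at $0$ from \cite[Theorem 1.3]{MR5079601}. The limit $f$ should satisfy $f(t)\asymp t^{-\alpha}$ as $t\downarrow0$, which is integrable since $\alpha<1$, and should decay exponentially as $t\to\infty$. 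Granting this, $\EE\sigma_m\sim m^{2}\cdot m^{1-2\alpha}\int_0^\infty f=m^{5-4p}\int_0^\infty f$, and so $C_{\rm exit}(p)=\int_0^\infty f(t)\,dt$.

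Dominated convergence needs three uniform bounds.
\begin{enumerate}[(a)]
\item An anticoncentration bound $\PP(|\widetilde S_n|<m)\leq C\,m\,n^{-\alpha}$ for $m^{1/\alpha}\lesssim n\lesssim m^2$.
\item An exponential bound $\PP(\sigma_m>jm^2)\leq C m^{1-2\alpha}e^{-cj}$.
\item The trivial bound for $n\leq m^{1/\alpha}$, which contributes only $O(m^{1/\alpha})=o(m^{3-2\alpha})$.
\end{enumerate}
For (b), I will use Lemma~\ref{lemblockcover}-style iteration over blocks of length $m^2$. Conditionally on $\FF_\tau$, the decomposition \eqref{eqconditionalforestdecomp} together with \eqref{eqestisover} provides at least $(1-\alpha)m^2/8$ fresh independent signs on each block. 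Hence the block exits with probability at least $c>0$ whatever the past, so each block fails to exit with conditional probability at most $1-c$. The first block supplies the factor $m^{1-2\alpha}$ via (a).

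For (a) I would use the same forest decomposition. Conditionally on $\cH_{0,n}$, $\widetilde S_n$ equals a remainder plus a sum of $\gtrsim n$ independent fair signs. The signs give local smoothing at scale $\sqrt n$. It then remains to show that the remainder, whose scale is $n^\alpha$, has bounded density at scale $n^\alpha$; for that I plan to reuse the argument behind positivity and boundedness of the density of $Y$.

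At $p=3/4$ ($\alpha=1/2$), $a_n\asymp\sqrt n$ and $\operatorname{Var}M_n\asymp\log n$, so $\widetilde S_n/\sqrt{n\log n}$ is asymptotically Gaussian. For the upper bound I will show $\PP(\sigma_m>n)\leq\PP(|\widetilde S_n|<m)\leq Cm/\sqrt{n\log n}$ for $n\leq m^2$, again by the forest smoothing argument. Summing this over $n\leq m^2$ gives $O(m^2/\sqrt{\log m})$, and the block argument above handles $n>m^2$. For the lower bound I will show $\PP(\sigma_m>m^2/2)\geq c/\sqrt{\log m}$. The idea is to require $|M_{\lfloor m^2/K\rfloor}|\leq 1/\sqrt{\log m}$, which has probability $\asymp1/\sqrt{\log m}$ by a local CLT for $M$. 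On top of that, the fluctuation of $\widetilde S$ over the window $[m^2/K,m^2]$ must stay below $m/2$. Its variance is at most $a_n^2(\log m^2-\log(m^2/K))\asymp n\log K$, so it has probability bounded below for a fixed large $K$, uniformly in $m$. A similar argument over the initial segment keeps $\max_{k\leq m^2/K}|\widetilde S_k|<m$ with conditional probability bounded below.

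The main obstacle is the local-limit input. In the superdiffusive case this means the joint convergence of $(m^{2\alpha-1}Y,\ \widetilde S_{\lfloor\cdot\, m^2\rfloor}/m)$ on the event where $Y$ is of size $m^{1-2\alpha}$, that is, a local limit theorem for $Y$ at $0$ jointly with an asymptotically independent diffusive tail path. I would first try conditioning on $\FF_{\lfloor\varepsilon m^2\rfloor}$. Then $Y=M_{\varepsilon m^2}+(Y-M_{\varepsilon m^2})$, where the first term is approximately independent of the diffusive tail path. The second term has conditional variance $\asymp(\varepsilon m^2)^{1-2\alpha}$, which is of the same order $m^{1-2\alpha}$ as the window for $Y$, so $Y$ is not determined by $\FF_{\varepsilon m^2}$ at that precision. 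Instead, I would invoke the local smoothness of the law of $M_{\varepsilon m^2}$, namely the bounded and continuous density of $Y$, to pass from a smoothed version to the point evaluation $\rho_Y(0)$, and finally let $\varepsilon\to0$.
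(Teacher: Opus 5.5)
\textbf{Superdiffusive case (ii).} Your overall structure for (ii) matches the paper's (Proposition~\ref{proptailUI1d}, Corollary~\ref{corprofiletomean}): a tail sum, a bound $Ct^{-\alpha}$ for small $t$, geometric decay over blocks of length $m^2$, and dominated convergence. Your block bound (b) via \eqref{eqconditionalforestdecomp} and \eqref{eqestisover} is a valid substitute for the coupling of Lemma~\ref{tailm2up1derwlem}.

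However, the step that carries the whole content of (ii) is not proved. That step is the existence of the limit of $m^{2\alpha-1}\PP(\sigma_m>tm^2)$. You flag it as the main obstacle and give only a sketch, and the sketch gets the relevant scale wrong. You compare $Y-M_{\varepsilon m^2}$ with a $Y$-window of size $m^{1-2\alpha}$. Since the one-dimensional walk is a time-inhomogeneous Markov chain, what is actually needed at $n=\lfloor\varepsilon m^2\rfloor$ is the law of $\widetilde S_n$ on windows of size $\asymp m$, i.e.\ of $M_n$ on windows of size $\asymp\varepsilon^{-\alpha}m^{1-2\alpha}$. At that scale Azuma's inequality gives $\PP(|Y-M_n|>\eta\varepsilon^{-\alpha}m^{1-2\alpha}\mid\FF_n)\leq2e^{-c\eta^2/\varepsilon}$, so the continuous density of $Y$ at $0$ can be transferred.

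Even with that fix, three ingredients are still missing from the proposal:
\begin{enumerate}[(1)]
\item a functional limit theorem for the walk restarted at time $\varepsilon m^2$, uniform in the starting point;
\item control of paths that leave $(-m,m)$ before time $\varepsilon m^2$ and return;
\item the interchange of $\varepsilon\to0$ with $m\to\infty$.
\end{enumerate}
The paper avoids local limit theory altogether. It changes measure to the diffusive walk with parameter $\bar\alpha=1-\alpha$, under which confinement is typical. It then proves that the normalized likelihood ratio $\mathcal Z_n$ of \eqref{defZnlikelihood} is bounded in $L^q$ for some $q>1$ (Proposition~\ref{propnormalizedlikelihood}). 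Finally, a stable functional CLT makes $\mathcal Z_\infty$ asymptotically independent of the rescaled path.

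Your input (a) is likewise an unproved claim: that the forest remainder has bounded density at scale $n^\alpha$ down to resolution $\sqrt n$. The paper instead gets $\sup_y\PP(\widetilde S_n=y)\leq Cn^{-\alpha}$ from the master equation by a one-line induction (Lemma~\ref{anticoncenERWlara12}).

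\textbf{Critical case (i), lower bound.} The sketch fails as written, for three reasons.
\begin{enumerate}[(1)]
\item By Lemma~\ref{lemcriticalpointbound}, the event $|M_{\lfloor m^2/K\rfloor}|\leq1/\sqrt{\log m}$ has probability $O(1/\log m)$, not of order $1/\sqrt{\log m}$. You need $|M_{\lfloor m^2/K\rfloor}|\leq c$ instead, and even that requires a local estimate at spatial scale $\sqrt n$, not just the CLT.
\item On $[m^2/K,m^2]$, an upper bound $\asymp m^2\log K$ on the variance cannot give a lower bound on the probability that the fluctuation stays below $m/2$; for large $K$ that variance is much larger than $m^2/4$. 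You need a small-ball estimate, e.g.\ from a conditional functional CLT.
\item Most seriously, the initial segment is not ``similar''. On $[0,m^2/K]$ the unconditional fluctuations are of order $m\sqrt{(\log m)/K}\gg m$. What you need is that, \emph{conditionally on the later event} $M_{\lfloor m^2/K\rfloor}=O(1)$, the whole earlier path stays in $(-m,m)$. In logarithmic time this is a Brownian path pinned near $0$ staying in a tube that widens exponentially backward, and no second-moment argument delivers it.
\end{enumerate}
The paper handles point (3) with Fang's embedding, the time-change control of Lemma~\ref{lemconditionalprop42}, and the tube bound of Lemma~\ref{BMlemtube} (a bridge decomposition plus Anderson's inequality). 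It also runs forward from $\ell=\lceil\log^8 m\rceil$, where $M_\ell$ is typical. The cost $1/\sqrt{\log m}$ then appears as a forward Brownian tube probability, and no conditioning on a future endpoint arises.

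\textbf{Critical case (i), upper bound.} Your bound $\PP(|\widetilde S_n|<m)\leq Cm/\sqrt{n\log n}$ again rests on an unproved smoothing claim. The paper gets the point bound from log-concavity of the law of $(\widetilde S_n+n)/2$ given $X_1=1$, together with $\operatorname{Var}\widetilde S_n\sim n\log n$ (Lemma~\ref{lemcriticalpointbound}).
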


\subsection{Tail bounds and moment transfer}
\label{sectailtransfer1d}

By coupling $\PP^{(\alpha)}$ ($\alpha \geq 0$) and $\PP^{(0)}$, we can prove the following result. 

\begin{lemma}
\label{tailm2up1derwlem}
  There exists a constant $\rho \in (0,1)$ such that for any $ \alpha \in [0,1)$, and any $m, k \geq 1$,
    \[
  \PP^{(\alpha)}(\sigma_m > m^2 + k m^2) \leq \rho^k \PP^{(\alpha)}(\sigma_m > m^2).
    \]
    In particular, 
    \[
\sum_{n > m^2} \PP^{(\alpha)}(\sigma_m >n) \leq \frac{m^2}{1-\rho} \PP^{(\alpha)}(\sigma_m > m^2).
    \]
\end{lemma}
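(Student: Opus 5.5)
The plan is to prove a uniform one-block estimate: there is $\rho\in(0,1)$, independent of $\alpha\in[0,1)$ and $m$, such that for every deterministic $n\geq m^2$,
\[
\PP^{(\alpha)}\bigl(\sigma_m>n+m^2\mid\FF_n\bigr)\leq\rho\qquad\text{on }\{\sigma_m>n\}.
\]
Applying this with $n=m^2+(k-1)m^2$, conditioning on $\FF_n$ and iterating $k$ times gives $\PP^{(\alpha)}(\sigma_m>m^2+km^2)\leq\rho^k\PP^{(\alpha)}(\sigma_m>m^2)$. The ``in particular'' claim then follows by grouping $n>m^2$ into blocks $(m^2+km^2,\,m^2+(k+1)m^2]$ of length $m^2$. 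On each block the tail is at most $\rho^k\PP(\sigma_m>m^2)$, and summing the geometric series over $k\geq0$ gives the factor $m^2/(1-\rho)$.

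For the one-block estimate I would use the step-reinforced construction ($\alpha\geq0$) together with the conditional forest decomposition \eqref{eqconditionalforestdecomp} at the deterministic time $\tau=n$ over $m^2$ further steps. Conditionally on $\cH_{n,m^2}$, the increment $\widetilde S_{n+m^2}-\widetilde S_n$ lifted to $\ZZ$ equals $\sum_{j\in\mathscr I}g_j+R$, where $R$ is $\cH$-measurable and the $g_j$, $j\in\mathscr I:=\mathscr I(n,m^2)$, are i.i.d.\ uniform $\pm1$. The set $\mathscr I$ is an $\cH$-measurable random set.

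The difficulty is that $\alpha$ can be close to $1$, so \eqref{eqestisover} only guarantees $|\mathscr I|\gtrsim(1-\alpha)m^2$, and the resulting constant would not be uniform in $\alpha$. I would first check whether the intended argument is really uniform. If not, the fallback is to compare directly with the endpoint. With probability $p^{m^2}$-type estimates this fails, so instead I would argue differently. Given the past at time $n\geq m^2\geq 1$ with $\sigma_m>n$, consider the event that the next $2m$ steps all move in the direction of the majority of past increments, or in a fixed direction. Its probability is not uniformly bounded below either.

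So the main route I would commit to is the following. The walk stays inside $(-m,m)$ only if, for every $t\in[n,n+m^2]$, $|\widetilde S_t|<m$. The conditional probability that a given block of isolated labels, summed as a simple random walk of length $|\mathscr I|$, stays within a window of width $2m$ is at most $\PP(\max_{i\leq|\mathscr I|}|\text{SRW}_i-x|<2m)$. This is bounded by $\rho_0<1$ once $|\mathscr I|\geq c m^2$. To see that the fixed-window constraint along the partial sums of isolated labels is implied, I would use the same decomposition at the intermediate times, with the remainder frozen.

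When $|\mathscr I|$ is small because $\alpha$ is near $1$, the reinforced structure helps instead. Most increments copy earlier ones, so the conditional drift $\alpha\Delta_t/t$ together with $\Delta_n$ of size $n^\alpha$ pushes the walk out. I expect this uniform-in-$\alpha$ control to be the main obstacle. My first attempt would be the coupling of $\PP^{(\alpha)}$ with $\PP^{(0)}$ hinted at before the lemma: run the walk so that the non-copied steps $g_j$ form a simple random walk, and show that the copied steps can only increase the oscillation over the block. This is plausible in $d=1$ because copies of an increment sequence conditioned on the symmetric labels are symmetric, which gives the bound $\rho$ via a reflection/symmetrization argument applied to each conditional law.
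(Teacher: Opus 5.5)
Your reduction is correct and matches the paper: a one-block bound $\PP^{(\alpha)}(\sigma_m>n+m^2\mid\FF_n)\le\rho$ on $\{\sigma_m>n\}$, with $\rho$ independent of $\alpha\in[0,1)$ and $m$, iterated by conditioning and then summed over blocks of length $m^2$, gives both assertions. The gap is that you never prove this one-block bound uniformly in $\alpha$.

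\begin{itemize}
\item The forest decomposition only supplies $|\mathscr I(n,m^2)|\gtrsim(1-\alpha)m^2$ free labels. The resulting bound on the probability of staying in the window therefore degenerates as $\alpha\to1$, as you note yourself.
\item The drift heuristic for $\alpha$ near $1$ is never turned into an estimate.
\item Your final coupling idea rests on the claim that copied steps can only increase the oscillation. This is false pathwise: a copied increment can point toward the origin.
\item The appeal to a ``reflection/symmetrization of each conditional law'' does not by itself give any bound below $1$.
\end{itemize}

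The missing idea is a monotone coupling of absolute values, which exploits the outward push you were after. In $d=1$ the walk is a time-inhomogeneous Markov chain. By \eqref{condtranStild}, whenever $\widetilde S_t\neq0$,
\[
\PP^{(\alpha)}\bigl(|\widetilde S_{t+1}|=|\widetilde S_t|+1\mid\FF_t\bigr)=\frac12+\frac{\alpha|\widetilde S_t|}{2t}\geq\frac12,
\]
while from $0$ the absolute value moves to $1$.

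Given $\FF_n$, let $\widehat S$ be a simple random walk started at $\widetilde S_n$, and drive $|\widetilde S|$ and $|\widehat S|$ with the same uniform variables.
\begin{itemize}
\item The two absolute values have the same parity. So if they differ, they differ by at least $2$, and one step cannot reverse their order.
\item If they are equal, the inequality above preserves the order.
\end{itemize}
Hence $|\widehat S_t|\leq|\widetilde S_t|$ for all $t\geq n$, and on $\{\sigma_m>n\}$,
\[
\PP^{(\alpha)}(\sigma_m>n+m^2\mid\FF_n)\leq\rho:=\sup_{m\geq2}\,\sup_{|x|<m}\PP^{(0)}_x(\sigma_m>m^2).
\]

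To see $\rho<1$, take $x\geq0$ (the case $x<0$ follows by symmetry). Exiting by time $m^2$ from $x$ is at least as likely as the event $\widehat S_{m^2}\geq m$ for a walk started at $0$. That probability is positive for each $m\geq2$ and has a positive limit by the central limit theorem.

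This coupling makes uniformity in $\alpha$ automatic and works at every deterministic time $n$, so no forest decomposition or separate treatment of $\alpha$ near $1$ is needed.
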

\begin{proof}
The coupling in \cite[Proposition 2.5]{MR4908097}, stated there for $j=0$, also works after any deterministic time $j$. Indeed, when $|\widetilde S_n|=r>0$, the conditional probability of increasing the absolute value is
\[
\frac12+\frac{\alpha r}{2n}\geq\frac12.
\]
At zero, the absolute value increases to $1$. Couple the two absolute-value processes using the same uniform variables. Their parities agree, so unequal values differ by at least $2$. When they are equal, the preceding probability comparison preserves their order. Thus, given $\FF_j$, we can couple the future walk with a simple random walk $\widehat S$ started at $\widehat S_j=\widetilde S_j$ so that $|\widehat S_n|\leq|\widetilde S_n|$ for all $n\geq j$. Therefore,
\[
\PP^{(\alpha)}( \sigma_m > j+m^2 \mid \FF_j) \1_{\{\sigma_m > j\}} \leq \rho \1_{\{\sigma_m > j\}},
\]
where
\[
\rho:=\sup_{m\geq2}\sup_{|x|<m}\PP_x^{(0)}(\sigma_m>m^2)<1.
\]
To see that $\rho<1$, for $x\geq0$ the probability of exiting by time $m^2$ is at least $\PP_0^{(0)}(\widehat S_{m^2}\geq m)$; for $x<0$, use symmetry. By the central limit theorem, these probabilities have a positive limit as $m\to\infty$.  The first assertion then follows from iteration. Thus,
\[
\sum_{n > m^2} \PP^{(\alpha)}(\sigma_m >n) \leq \sum_{k=0}^{\infty} m^2 \PP^{(\alpha)}(\sigma_m >m^2 +k m^2 ) \leq m^2 \PP^{(\alpha)}(\sigma_m >m^2 ) \sum_{k=0}^{\infty}  \rho^k, 
\]
which proves the second inequality.
\end{proof}

When $\alpha=1/2$, \cite[Theorem 3]{MR4377583} implies that there exists a positive constant $C$ such that for all $n > 1$ and all $y \in \ZZ$ with $|y|\leq \sqrt{n} (\log n)^{1/3} (=o(\sqrt{n \log n}))$, one has

\begin{lemma}\label{lemcriticalpointbound}
Let $\widetilde S$ be a one-dimensional ERW with memory parameter
$p=3/4$. There exists a constant $C>0$ such that, for every $n\ge2$,
\begin{equation}\label{eqcriticalpointbound}
    \sup_{y\in\mathbb Z}\mathbb P(\widetilde S_n=y)
    \le \frac{C}{\sqrt{n\log n}}.
\end{equation}
\end{lemma}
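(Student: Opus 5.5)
We want $\sup_y \PP(\widetilde S_n=y)\le C/\sqrt{n\log n}$ at $p=3/4$ (so $\alpha=1/2$). The text just before the statement says that \cite[Theorem 3]{MR4377583} gives a local limit theorem: for $|y|\le\sqrt n(\log n)^{1/3}$,
\[
\PP(\widetilde S_n=y)\le \frac{C}{\sqrt{n\log n}}.
\]
This comes from a Gaussian local limit at scale $\sqrt{n\log n}$, with the parity constraint absorbed into the constant. So the plan is to split $y$ into a central range, where the local limit theorem applies directly, and a large-deviation range, where a tail bound is so small that it beats $1/\sqrt{n\log n}$.

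\textbf{Central range.} For $|y|\le\sqrt n(\log n)^{1/3}$, the cited local estimate gives the bound with a uniform constant. If the cited theorem is an asymptotic with a uniform error term, take $n\ge n_0$ large. Small $n\le n_0$ are handled trivially, since $\PP\le1\le C/\sqrt{n\log n}$ once $C$ is enlarged.

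\textbf{Large $|y|$.} We use Lemma~\ref{empiconvunifprop} with $d=1$. There $\widetilde S_n=N_n(e_1)-N_n(-e_1)=\Delta_n(e_1)-\Delta_n(-e_1)$, so $|\widetilde S_n|\le\sqrt2\,\|\Delta_n\|$. Since $V_{1/2}(n)=n\log(n+1)$, \eqref{directioncounttail} gives
\[
\PP(\widetilde S_n=y)\le \PP\bigl(|\widetilde S_n|\ge |y|\bigr)\le C\exp\Bigl(-\frac{c|y|^2}{2n\log(n+1)}\Bigr).
\]
This is not small when $|y|=\sqrt n(\log n)^{1/3}$, because the exponent is only of order $(\log n)^{-1/3}$. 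This mismatch is the main obstacle: there is a gap between $\sqrt n(\log n)^{1/3}$ and, say, $A\sqrt n\log n$.

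\textbf{Closing the gap.} The cleanest fix is to check whether \cite[Theorem 3]{MR4377583} actually holds uniformly for all $|y|=O(\sqrt{n}\log n)$, or gives a Gaussian-type bound $\frac{C}{\sqrt{n\log n}}e^{-cy^2/(n\log n)}$. In that case the whole central range is covered.

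If it does not, I would use a monotonicity argument. By symmetry and a conditioning argument, the mass function of $\widetilde S_n$ restricted to one parity class should be unimodal, or at least satisfy $\PP(\widetilde S_n=y)\le C'\PP(\widetilde S_n=y')$ for $|y'|\le|y|$ of the same parity. Proving this would go through the coupling of absolute values from Lemma~\ref{tailm2up1derwlem}, which is reflection-type. Alternatively, I would average: for $|y|\ge\sqrt n(\log n)^{1/3}$, condition on $\FF_{n-k}$ with $k=\lfloor n/2\rfloor$. In the last $k$ steps, the forest decomposition \eqref{eqconditionalforestdecomp} provides at least $ck$ fresh independent uniform $\pm1$ labels. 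A standard local CLT for the sum of those labels then gives the conditional bound $\PP(\widetilde S_n=y\mid\cH)\le C/\sqrt{n}$. This loses the $\sqrt{\log n}$ factor, so it is insufficient on its own.

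Therefore I expect to rely on a refined reading of the cited local limit theorem (its uniform Gaussian bound) to handle $|y|\le A\sqrt n\log n$. The exponential tail above then covers $|y|>A\sqrt n\log n$, where the bound is $Ce^{-cA^2\log n}\le n^{-1}$ for $A$ large.
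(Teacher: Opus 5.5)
There is a genuine gap, and it is exactly what you call the main obstacle. At $p=3/4$ the natural scale is $\sqrt{n\log n}$, while $\sqrt n(\log n)^{1/3}=o(\sqrt{n\log n})$. So the cited local estimate covers only a vanishing neighbourhood of the origin. The range $\sqrt n(\log n)^{1/3}<|y|\lesssim\sqrt n\log n$, which your Gaussian tail bound cannot reach, carries essentially all of the probability mass. Hoping for ``a refined reading'' of \cite[Theorem 3]{MR4377583} is not an argument, and none of your fallbacks closes the gap:
\begin{enumerate}[(a)]
\item The unconditional law of $\widetilde S_n$ is not unimodal on its parity class. Already $\PP(\widetilde S_2=\pm2)=3/8>\PP(\widetilde S_2=0)=1/4$.
\item The weaker comparison $\PP(\widetilde S_n=y)\le C'\PP(\widetilde S_n=y')$ is left unproved. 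The coupling of Lemma~\ref{tailm2up1derwlem} only compares $|\widetilde S_n|$ stochastically with the absolute value of a simple random walk, which says nothing about monotonicity of point masses.
\item The fresh-label averaging loses the factor $\sqrt{\log n}$, as you observe yourself.
\end{enumerate}

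The missing idea is a structural property that makes the split into ranges unnecessary.
\begin{enumerate}[(i)]
\item Condition on $X_1=1$. The law given $X_1=-1$ is the reflection, so averaging the two conditional bounds suffices.
\item Under this conditioning, $Z_n=(\widetilde S_n+n)/2$ is log-concave on $\{1,\ldots,n\}$ by \cite[Proposition~2.2]{MR4926011}. This is the correct form of the unimodality you were after.
\item Let $a$ be the largest point mass. Monotonicity of the ratios $q_{j+1}/q_j$ gives $Q_{j+1}\le(1-a)Q_j$ for the tail sums beyond the mode. Hence $\PP_+(|Z_n-m|\ge k)\le2(1-a)^k$ and $\operatorname{Var}_+(Z_n)\le4/a^2$.
\item By Sch\"utz--Trimper, $\operatorname{Var}_+(\widetilde S_n)\sim n\log n$, since the conditional mean is only $O(\sqrt n)$. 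This forces $a\le C/\sqrt{n\log n}$ uniformly in $y$, with no local limit theorem at all.
\end{enumerate}
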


\begin{proof}
Write $\mathbb P_+$ and $\mathbb E_+$ for probability and expectation conditional on $X_1=1$. Note that the conditional law given $X_1=-1$ is the reflection of the law
given $X_1=1$. By averaging these two conditional laws, we see that it suffices to show that
\begin{equation}
    \label{eqripointbdplus}
      \sup_{y\in\mathbb Z}\mathbb P_+(\widetilde S_n=y)
    \le\frac{C}{\sqrt{n\log n}}.
\end{equation}
By
\cite[Proposition~2.2]{MR4926011}, the distribution of
\[
    Z_n:=\frac{\widetilde S_n+n}{2}
\]
under $\mathbb P_+$ is log-concave on its support $\{1,\ldots,n\}$. We first give an elementary bound for its largest point mass. Fix $n\ge2$, and write
\[
    q_j:=\mathbb P_+(Z_n=j),\quad j \in [n]; \qquad
    a:=\max_j q_j=q_m,\qquad
    Q_j:=\sum_{r\ge j}q_r.
\]
Log-concavity means that $q_{j+1}/q_j$ is non-increasing on the
support. Hence, for $j\ge m$ with $q_j>0$,
\[
    \frac{Q_j}{q_j}
    =\sum_{r\ge0}\frac{q_{j+r}}{q_j} = \sum_{r\ge0}\frac{q_{j+r}}{q_{j+r-1}}\cdot\dots\cdot\frac{q_{j+1}}{q_j}
    \le \sum_{r\ge0}\frac{q_{m+r}}{q_m}
    =\frac{Q_m}{a}
    \le \frac1a.
\]
It follows that $Q_{j+1}=Q_j-q_j\le(1-a)Q_j$.
Iteration, together with the same argument for the left tail, gives
\[
    \mathbb P_+(|Z_n-m|\ge k)\le2(1-a)^k,
    \qquad k\ge1.
\]
Consequently,
\[
\begin{aligned}
     \operatorname{Var}_+(Z_n)
    &\leq \mathbb E_+(Z_n-m)^2  =\sum_{k\ge1}(2k-1)\mathbb P_+(|Z_n-m|\ge k)  \\
    &\leq 2\sum_{k\ge1}(2k-1)(1-a)^k
    =\frac{2(1-a)(2-a)}{a^2} \le\frac4{a^2}.
\end{aligned}
\]

By symmetry and \cite{schutz2004elephants}, as $n \to \infty$,
\[
    \operatorname{Var}_+(Z_n)
    =\frac14\operatorname{Var}_+(\widetilde S_n) = \frac14\operatorname{Var}(\widetilde S_n)
    \sim\frac14 n\log n.
\]
Combining this with the preceding bound for $a$, and increasing
$C$ to cover the finitely many small values of $n$, proves \eqref{eqripointbdplus}.
\end{proof}

For $\alpha > 1/2$, we shall use the following estimate on the transition probabilities.

\begin{lemma}
\label{anticoncenERWlara12}
Let $\widetilde S$ be an ERW with reinforcement parameter $\alpha\in[0,1)$. There exists $C(\alpha)>0$ such that, for all $n\geq1$,
\[
\sup_{y\in\ZZ}\PP^{(\alpha)}(\widetilde S_n=y)\leq C(\alpha)n^{-\alpha}.
\]
\end{lemma}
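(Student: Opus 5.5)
The plan is to reuse the log-concavity argument from the proof of Lemma~\ref{lemcriticalpointbound}, replacing the asymptotic variance input from \cite{schutz2004elephants} by a uniform lower bound $\operatorname{Var}_+(\widetilde S_n)\geq c(\alpha)n^{2\alpha}$ obtained from a martingale.

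\emph{Reduction to a variance bound.} As there, write $\PP_+,\EE_+$ for conditioning on $X_1=1$. Since the law given $X_1=-1$ is the reflection of the law given $X_1=1$, it suffices to bound $\sup_y\PP_+(\widetilde S_n=y)$. Assume that \cite[Proposition~2.2]{MR4926011} gives log-concavity of $Z_n=(\widetilde S_n+n)/2$ under $\PP_+$ for every $p\in[1/2,1)$, that is, for $\alpha\in[0,1)$ and not only at $p=3/4$. Then the elementary argument in the proof of Lemma~\ref{lemcriticalpointbound} applies verbatim and gives
\[
\max_j\PP_+(Z_n=j)\leq\frac{2}{\sqrt{\operatorname{Var}_+(Z_n)}}=\frac{4}{\sqrt{\operatorname{Var}_+(\widetilde S_n)}}.
\]
It therefore suffices to show $\operatorname{Var}_+(\widetilde S_n)\geq c(\alpha)n^{2\alpha}$ for all $n\geq2$. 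The case $n=1$ is trivial.

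\emph{Variance bound.} We cannot simply use $\operatorname{Var}(\widetilde S_n)$: we have $\operatorname{Var}=\operatorname{Var}_++(\EE_+\widetilde S_n)^2$, and for $\alpha>1/2$ both terms are of order $n^{2\alpha}$. Instead use the standard martingale. By \eqref{PXn1} with $d=1$,
\[
\EE(X_{n+1}\mid\FF_n)=\frac{\alpha\widetilde S_n}{n},
\]
so $\EE(\widetilde S_{n+1}\mid\FF_n)=(1+\alpha/n)\widetilde S_n$. Put
\[
a_1=1,\qquad a_n=\prod_{k=1}^{n-1}\Bigl(1+\frac{\alpha}{k}\Bigr)=\frac{\Gamma(n+\alpha)}{\Gamma(1+\alpha)\Gamma(n)}\asymp n^{\alpha}.
\]
Then $M_n:=\widetilde S_n/a_n$ is an $(\FF_n)$-martingale under $\PP_+$, with $M_1=1$ deterministic. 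By orthogonality of increments,
\[
\operatorname{Var}_+(M_n)=\sum_{k=1}^{n-1}\EE_+\bigl[(M_{k+1}-M_k)^2\bigr]\geq\EE_+\bigl[(M_2-M_1)^2\bigr].
\]
Since $M_2-M_1=(X_2-\EE(X_2\mid\FF_1))/a_2$ and $\operatorname{Var}(X_2\mid\FF_1)=1-\alpha^2$, this last term equals $(1-\alpha^2)/a_2^2=(1-\alpha^2)/(1+\alpha)^2>0$. Hence $\operatorname{Var}_+(\widetilde S_n)=a_n^2\operatorname{Var}_+(M_n)\geq c(\alpha)n^{2\alpha}$, which with the reduction above gives the lemma. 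For $\alpha<1/2$ the bound is weaker than what is true (order $n^{-1/2}$), but it is exactly what is stated.

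\emph{Main obstacle.} The one step that is not routine is confirming that the log-concavity result of \cite{MR4926011} covers all $p\in[1/2,1)$ rather than only $p=3/4$. If it does not, I would fall back on the forest construction of Section~\ref{secnotation}. Conditionally on the forest $\F_n$, $\widetilde S_n$ is a deterministic term plus $\sum_i\varepsilon_i c_i$, where the $\varepsilon_i$ are independent signs and the $c_i$ are the sizes of the components not containing vertex $1$. A Kolmogorov--Rogozin type concentration inequality then bounds the point masses by $C\bigl(\#\{i:c_i\geq1\}\bigr)^{-1/2}$, and more sharply in terms of the large components. One would show that, with overwhelming probability, there are $\gtrsim n^{2\alpha-1}\cdot$(suitable count) roots whose components have size of order $(n/j)^{\alpha}$, giving $n^{-\alpha}$. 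This route requires delicate size estimates for the components of the forest, which is why I would try the log-concavity route first.
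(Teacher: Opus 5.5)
There is a genuine gap. Your variance computation is correct, including the point that one needs $\operatorname{Var}_+$ rather than $\operatorname{Var}$. The step it feeds into fails, however. The log-concavity of $Z_n$ under $\PP_+$ that you assume for all $p\in[1/2,1)$ is false in part of the superdiffusive range, and that is precisely where the lemma is used (Proposition~\ref{proptailUI1d}, $p>3/4$). Already at $n=3$ one has $\PP_+(Z_3=1)=(1-p)/2$, $\PP_+(Z_3=2)=(1-p)(p+\tfrac12)$ and $\PP_+(Z_3=3)=p^2$, whence
\[
\PP_+(Z_3=2)^2-\PP_+(Z_3=1)\PP_+(Z_3=3)=-\tfrac14(1-p)(1+p)(4p^2-2p-1)<0\qquad\text{for }p>\tfrac{1+\sqrt5}{4}.
\]
For $p=0.9$ the law is $(0.05,0.14,0.81)$, and log-concavity fails again at $n=4$. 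So the log-concavity result quoted in the proof of Lemma~\ref{lemcriticalpointbound} cannot be the blanket statement you need. You also give no argument that log-concavity holds for all large $n$, which is what absorbing small $n$ into $C(\alpha)$ would require. Without log-concavity or a substitute, a variance lower bound says nothing about the largest atom. The forest fallback does not close the gap either. Counting components only yields $O(n^{-1/2})$, and upgrading this to $n^{-\alpha}$ would need a multiscale anti-concentration argument for the component sizes, which you do not supply.

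The missing idea is that in $d=1$ the walk is a time-inhomogeneous Markov chain whose transitions contract the supremum of the point masses. By \eqref{condtranStild}, for $|y|\le n-1$,
\[
\PP(\widetilde S_{n+1}=y)=\tfrac12\Bigl(1+\tfrac{\alpha(y-1)}{n}\Bigr)\PP(\widetilde S_n=y-1)+\tfrac12\Bigl(1-\tfrac{\alpha(y+1)}{n}\Bigr)\PP(\widetilde S_n=y+1),
\]
and the two coefficients are nonnegative with sum $1-\alpha/n$. The extreme sites $\pm(n+1)$ have mass $p^n/2$. Hence $m_n:=\sup_y\PP(\widetilde S_n=y)$ satisfies $m_{n+1}\le\max\{(1-\alpha/n)m_n,\,p^n/2\}$. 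Induction using $1-\alpha/n\le(1+1/n)^{-\alpha}$ then gives $m_n\le C(\alpha)n^{-\alpha}$. This is the paper's proof, and it needs neither a variance estimate nor log-concavity.
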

\begin{proof}
Write $m_n:=\sup_y\PP(\widetilde S_n=y)$. Since $\widetilde S_n=2\Delta_n(e_1)$, \eqref{PXn1} gives
\begin{equation}
\label{condtranStild}
\PP(\widetilde S_{n+1}=y\pm1\mid\widetilde S_n=y)
=\frac12\left(1\pm\frac{\alpha y}{n}\right)
\end{equation}
for every reachable $y$ at time $n$. At an interior site $|y|\leq n-1$ of the support at time $n+1$,
\[
\begin{aligned}
\PP(\widetilde S_{n+1}=y)
={}&\frac12\left(1+\frac{\alpha(y-1)}n\right)\PP(\widetilde S_n=y-1)\\
&+\frac12\left(1-\frac{\alpha(y+1)}n\right)\PP(\widetilde S_n=y+1)
\leq\left(1-\frac\alpha n\right)m_n.
\end{aligned}
\]
Here we used that both coefficients are nonnegative. The two boundary probabilities are $\PP(\widetilde S_{n+1}=\pm(n+1))=p^n/2$, where $p=(1+\alpha)/2<1$ is the memory parameter. Consequently,
\[
m_{n+1}\leq\max\left\{\left(1-\frac\alpha n\right)m_n,\frac{p^n}{2}\right\}.
\]
Choose $C(\alpha)\geq\sup_{n\geq1}n^\alpha p^{n-1}/2$, which is finite since $p<1$. One can then prove $m_n\leq C(\alpha)n^{-\alpha}$ by induction, using the inequality $(1+1/n)^{-\alpha}\geq 1-\alpha/n$.
\end{proof}

For a fixed memory parameter $p\in[0,1)$, define
\begin{equation}
\label{defbLp}
b_p(L):=\begin{cases}
1,&p<3/4,\\
\sqrt{\log L},&p=3/4,\\
L^{4p-3},&p>3/4,
\end{cases}
\end{equation}
and, for $t\geq0$,
\begin{equation}
\label{deftailprofiles}
F_{L,p}^{\rm cov}(t):=b_p(L)\PP(\tau_{\rm cov}>\lfloor L^2t\rfloor),\qquad
F_{L,p}^{\rm exit}(t):=b_p(L)\PP(\sigma_L>\lfloor L^2t\rfloor).
\end{equation}
The next proposition gives an integrable bound for these tail functions, allowing us to pass from their limits to exact mean asymptotics.

\begin{proposition}
\label{proptailUI1d}
For every fixed $p\in[0,1)$, there exists a nonnegative function $h_p\in L^1(0,\infty)$ such that
\[
F_{L,p}^{\bullet}(t)\leq h_p(t),
\qquad L\geq2,\quad t>0,\quad
\bullet\in\{\mathrm{cov},\mathrm{exit}\}.
\]
In particular, both families in \eqref{deftailprofiles} are uniformly integrable with respect to Lebesgue measure on $(0,\infty)$.
\end{proposition}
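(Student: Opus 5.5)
Since $\tau_{\rm cov}\leq\sigma_L$ by \eqref{equcovexit1D}, we have $F^{\rm cov}_{L,p}\leq F^{\rm exit}_{L,p}$ pointwise, so it suffices to dominate $F^{\rm exit}_{L,p}$. The plan is to split at $t=1$. On $(0,1]$ we use a crude bound coming from the point-mass estimates (Lemmas~\ref{lemcriticalpointbound} and \ref{anticoncenERWlara12}). On $[1,\infty)$ we obtain geometric decay from Lemma~\ref{tailm2up1derwlem}, and there we need only that $b_p(L)\PP(\sigma_L>L^2)$ is bounded uniformly in $L$, which is the $t=1$ case of the first part.

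\emph{Small times.} Let $n=\lfloor L^2t\rfloor\geq1$. On $\{\sigma_L>n\}$ we have $|\widetilde S_n|<L$, so
\[
\PP(\sigma_L>n)\leq(2L-1)\sup_y\PP(\widetilde S_n=y).
\]
\begin{itemize}
\item For $p<3/4$, $b_p\equiv1$ and we simply take $h_p(t)=1$ on $(0,1]$.
\item For $p>3/4$, i.e.\ $\alpha=2p-1>1/2$, Lemma~\ref{anticoncenERWlara12} gives
\[
F^{\rm exit}_{L,p}(t)\leq CL^{2\alpha-1}\cdot L\,(L^2t/2)^{-\alpha}=C't^{-\alpha},
\]
using $n\geq L^2t/2$ when $L^2t\geq1$. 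When $L^2t<1$, the trivial bound $F\leq b_p(L)=L^{2\alpha-1}\leq L^{2\alpha}<t^{-\alpha}$ holds. Since $\alpha<1$, $t^{-\alpha}$ is integrable near $0$.
\item For $p=3/4$, Lemma~\ref{lemcriticalpointbound} gives
\[
F^{\rm exit}_{L,p}(t)\leq C\sqrt{\log L}\;\frac{L}{\sqrt{L^2t\,\log(L^2t)}}
\]
when $L^2t\geq2$, say. If $t\geq1/L$, then $\log(L^2t)\geq\log L$ and the bound is $Ct^{-1/2}$. If $t<1/L$, we use the trivial bound $F\leq\sqrt{\log L}\leq\sqrt{\log(1/t)}$.
\end{itemize}
In every case we get, on $(0,1]$,
\[
F^{\rm exit}_{L,p}(t)\leq C\,t^{-\max(\alpha,1/2)}\bigl(1+\sqrt{\log(1/t)}\bigr),
\]
which is integrable on $(0,1]$. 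Finitely many small $L$, and the boundary cases where $n\leq1$, are absorbed into the constant.

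\emph{Large times.} Taking $t=1$ in the previous step gives
\[
M:=\sup_L b_p(L)\PP(\sigma_L>L^2)<\infty.
\]
For $t\geq2$, put $k=\lfloor t\rfloor-1\geq1$, so that $\lfloor L^2t\rfloor\geq L^2+kL^2$. Lemma~\ref{tailm2up1derwlem} (with $m=L$) then gives
\[
F^{\rm exit}_{L,p}(t)\leq\rho^{k}M\leq M\rho^{t-2}.
\]
On $[1,2]$, monotonicity in $t$ gives $F\leq M$. Hence we take
\[
h_p(t)=C\,t^{-\max(\alpha,1/2)}\bigl(1+\sqrt{\log(1/t)}\bigr)\1_{(0,1]}(t)+M\rho^{t-2}\1_{(1,\infty)}(t),
\]
which lies in $L^1(0,\infty)$. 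Uniform integrability of both families follows immediately from domination by a fixed integrable function.

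The only delicate point is the critical case near $t=0$. There the $\log(L^2t)$ in the point-mass bound degenerates, so the split at $t=1/L$ and the trade of $\sqrt{\log L}$ for $\sqrt{\log(1/t)}$ are needed.
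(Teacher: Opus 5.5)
\textbf{Gap for $p\in[0,1/2)$.} Your large-time step fails in this range. Lemma~\ref{tailm2up1derwlem} is stated and proved only for $\alpha=2p-1\in[0,1)$. Its coupling uses that, at $|\widetilde S_n|=r>0$, the probability of moving away from the origin is $\frac12+\frac{\alpha r}{2n}\geq\frac12$, so that $|\widetilde S|$ dominates the absolute value of a simple random walk. When $p<1/2$ we have $\alpha<0$. The walk is then pulled back towards $0$, and the domination goes the wrong way. So for $p<1/2$ your argument gives only $F^{\rm exit}_{L,p}\leq 1$ on $[1,\infty)$, which is not integrable. Nothing in your proposal produces decay in $t$ there.

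\textbf{How the paper handles it, and how to repair your route.} For every $p<3/4$ the paper uses the exponential tail $\PP(\sigma_L>n)\leq C(p)e^{-c(p)n/L^2}$ from \cite[Theorem 1]{Andre2026}, noting that its proof also covers $p<1/2$, including $p=0$. This dominates $F^{\rm exit}_{L,p}$ on all of $(0,\infty)$ at once. You can cite that estimate for $p<1/2$, or repair your block argument yourself by starting the blocks only at time $KL^2$ with $K$ large:
\begin{itemize}
\item For $n\geq KL^2$ and $|\widetilde S_n|<L$, the inward bias is at most $|\alpha|L/(2n)\leq 1/(2KL)$.
\item The same parity coupling then compares $|\widetilde S|$, over a block of $L^2$ steps, with a reflected walk of drift $-1/(KL)$ per step.
\item That walk reaches $L$ within $L^2$ steps with probability bounded below uniformly in $L$.
\item This gives $\PP(\sigma_L>(K+k)L^2)\leq\rho_K^k$, hence geometric decay of $F^{\rm exit}_{L,p}(t)$ for $t\geq K$, with the trivial bound $1$ before that.
\end{itemize}

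\textbf{The other regimes are fine.} For $p\in[1/2,3/4)$ your route is valid and somewhat more self-contained than the paper's: $M\leq 1$ trivially, and Lemma~\ref{tailm2up1derwlem} replaces the external exponential estimate. The superdiffusive case matches the paper. In the critical case you split at $t=1/L$ and use the bound $\sqrt{\log(1/t)}$; the paper splits at $(\log L)^{-2/3}$ and uses $t^{-3/4}$. Both work.
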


\begin{proof}
Since $\tau_{\rm cov}\leq\sigma_L$, it is enough to consider $F_{L,p}^{\rm exit}$. We give a common integrable upper bound. If $p<3/4$, the exponential estimate of \cite[Theorem 1]{Andre2026} gives
\[
\PP(\sigma_L>n)\leq C(p)\exp\left(-\frac{c(p)n}{L^2}\right).
\]
The same argument covers $p=0$ since the bound in Section 3.1(ii) of that paper uses only $p< 1/2$. This gives the required bound in the subcritical regime.

Suppose $p=3/4$ and put $u_L:=(\log L)^{-2/3}$. If $u_L\leq t\leq1$ and $n=\lfloor L^2t\rfloor$, then, for all large $L$, we have $L\leq\sqrt n(\log n)^{1/3}$. Thus \eqref{eqcriticalpointbound} and a union bound over $y \in [-L,L]$ yield
\[
F_{L,p}^{\rm exit}(t)
\leq\sqrt{\log L}\,\PP(|\widetilde S_n|<L)
\leq Ct^{-1/2}\leq Ct^{-3/4}.
\]
For $0<t<u_L$, the trivial bound gives $F_{L,p}^{\rm exit}(t)\leq\sqrt{\log L}\leq t^{-3/4}$. Consequently, after absorbing finitely many small $L$ into the constant, one has
\[
F_{L,3/4}^{\rm exit}(t)\leq Ct^{-3/4},\qquad0<t\leq1.
\]

For $p>3/4$, write $\alpha=2p-1$ and $\delta=2\alpha-1$. Lemma~\ref{anticoncenERWlara12} gives, for $L^{-2}\leq t\leq1$,
\[
F_{L,p}^{\rm exit}(t)
\leq L^\delta\PP(|\widetilde S_{\lfloor L^2t\rfloor}|<L)
\leq C(p)t^{-\alpha}.
\]
For $0<t<L^{-2}$,
\[
F_{L,p}^{\rm exit}(t)\leq L^\delta\leq t^{-\delta/2}\leq t^{-\alpha}.
\]
Hence the same integrable bound $C(p)t^{-\alpha}$ holds throughout $(0,1]$.

In the critical and superdiffusive regimes, the preceding estimates at $t=1$ show that
\[
b_p(L)\PP(\sigma_L>L^2)\leq C(p).
\]
Monotonicity bounds $F_{L,p}^{\rm exit}$ uniformly on $[1,2]$, and Lemma~\ref{tailm2up1derwlem} gives
\[
F_{L,p}^{\rm exit}(t)\leq C(p)\rho^{\lfloor t\rfloor-1},\qquad t\geq2.
\]
Together with the bounds near zero, this gives the required integrable upper bound.
\end{proof}

\begin{corollary}
\label{corprofiletomean}
Fix $p\in[0,1)$ and $\bullet\in\{\mathrm{cov},\mathrm{exit}\}$. If
\[
\lim_{L\to\infty}F_{L,p}^{\bullet}(t)=F_p^{\bullet}(t)
\]
for Lebesgue-a.e. $t>0$, then $F_p^{\bullet}\in L^1(0,\infty)$ and
\begin{equation}
\label{profiletomean}
\lim_{L\to\infty}\frac{b_p(L)}{L^2}\EE\tau_L^{\bullet}
=\int_0^\infty F_p^{\bullet}(t)\,dt,
\end{equation}
where $\tau_L^{\rm cov}:=\tau_{\rm cov}$ and $\tau_L^{\rm exit}:=\sigma_L$.
\end{corollary}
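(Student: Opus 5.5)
The corollary follows from dominated convergence applied to the tail profiles, combined with the layer-cake formula for the mean of a nonnegative integer random variable. I will carry this out in three steps.

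First, write the mean as an integral of the profile. For $\tau=\tau_L^\bullet$, which takes values in $\NN$, we have $\EE\tau=\sum_{n\geq0}\PP(\tau>n)$. Substituting $n=\lfloor L^2t\rfloor$, each integer $n$ corresponds to the interval $t\in[n/L^2,(n+1)/L^2)$, which has length $L^{-2}$. Hence
\[
\frac{b_p(L)}{L^2}\EE\tau_L^\bullet=\int_0^\infty b_p(L)\PP\bigl(\tau_L^\bullet>\lfloor L^2t\rfloor\bigr)\,dt=\int_0^\infty F_{L,p}^\bullet(t)\,dt.
\]
This identity is exact, so no rounding error needs to be controlled. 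Finiteness of $\EE\tau_L^\bullet$ for each fixed $L$ is part of the same computation: the integrand is bounded by $h_p$.

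Second, integrability of the limit. By Proposition~\ref{proptailUI1d}, $0\leq F_{L,p}^\bullet\leq h_p$ for all $L\geq2$ and $t>0$. Passing to the a.e. limit gives $0\leq F_p^\bullet\leq h_p$ almost everywhere. Since $F_p^\bullet$ is an a.e. limit of measurable functions, it is measurable, and therefore $F_p^\bullet\in L^1(0,\infty)$.

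Third, the dominated convergence theorem, with dominating function $h_p\in L^1$ and pointwise a.e. convergence, lets us pass to the limit inside the integral. This yields \eqref{profiletomean}. The limit along $L\to\infty$ through the integers is handled by applying the theorem along arbitrary sequences.

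There is no genuine obstacle here. The substantive input is the uniform domination from Proposition~\ref{proptailUI1d}, which is already established. The only point needing care is the layer-cake identity with floors, and it is exact as shown above.
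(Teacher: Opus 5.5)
Your proposal is correct and follows the paper's own proof: the exact tail-sum identity $\frac{b_p(L)}{L^2}\EE\tau_L^\bullet=\int_0^\infty F_{L,p}^\bullet(t)\,dt$, then dominated convergence with the integrable majorant $h_p$ from Proposition~\ref{proptailUI1d}. Your extra remarks, that the floor substitution is exact and that $F_p^\bullet\leq h_p$ almost everywhere gives $F_p^\bullet\in L^1(0,\infty)$, only make explicit what the paper leaves implicit.
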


\begin{proof}
By the tail-sum formula,
\[
\frac{b_p(L)}{L^2}\EE\tau_L^{\bullet}
=\int_0^\infty F_{L,p}^{\bullet}(t)\,dt.
\]
Proposition~\ref{proptailUI1d} allows us to apply the dominated convergence theorem.
\end{proof}

\subsection{The superdiffusive regime: a change of measure}
\label{secchange1d}

Fix $\alpha\in(1/2,1)$ and put
\[
\bar\alpha:=1-\alpha,\qquad \delta:=2\alpha-1=1-2\bar\alpha.
\]
Trajectories that remain in an interval of size $L$ up to time $L^2$ are rare under $\PP^{(\alpha)}$, but occur with positive limiting probability under the diffusive law $\PP^{(\bar\alpha)}$ (recall \cite[Theorem 1]{Andre2026}). We compare the two laws and normalize their likelihood ratio. Its limit will determine the constant in the original tail probability.

Recall that $X_n=\widetilde S_n-\widetilde S_{n-1}$. By \eqref{condtranStild}, the Radon--Nikodym derivative of the first $n$ increments under $\PP^{(\alpha)}$ with respect to $\PP^{(\bar\alpha)}$ is
\begin{equation}
\label{defRadonder}
\widetilde{\mathscr R}_n
:=\prod_{k=1}^{n-1}\frac{1+\alpha W_k}{1+\bar\alpha W_k},
\qquad
W_k:=\frac{X_{k+1}\widetilde S_k}{k},
\end{equation}
with $\widetilde{\mathscr R}_1:=1$. The complementary parameter $\bar\alpha=1-\alpha$ is distinguished by the cancellation in \eqref{exactlogZn} below.

\begin{proposition}
\label{propnormalizedlikelihood}
Under $\PP^{(\bar\alpha)}$, define
\begin{equation}
\label{defZnlikelihood}
\mathcal Z_n:=n^{\delta/2}\widetilde{\mathscr R}_n
\exp\left(-\frac{\delta\widetilde S_n^2}{2n}\right),\qquad n\geq1.
\end{equation}
Then $\mathcal Z_n$ converges almost surely and in $L^1(\PP^{(\bar\alpha)})$ to a random variable $\mathcal Z_\infty\in(0,\infty)$. Moreover, for some $q>1$,
\begin{equation}
\label{ZnLqbound}
\sup_{n\geq1}\EE^{(\bar\alpha)}\mathcal Z_n^q<\infty.
\end{equation}
In particular,
\begin{equation}
\label{defcalpha}
c_\alpha:=\EE^{(\bar\alpha)}\mathcal Z_\infty\in(0,\infty).
\end{equation}
\end{proposition}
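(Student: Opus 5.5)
The plan is to compute $\log\mathcal Z_n$ exactly and show that the choice $\bar\alpha=1-\alpha$ cancels every term that does not converge. What remains is a series that converges absolutely and has exponential moments. Write $\widetilde S$ for the walk under $\PP^{(\bar\alpha)}$. Since $\bar\alpha<1/2$, Lemma~\ref{empiconvunifprop} applied with $\widetilde S_k=2\Delta_k(e_1)$ gives $\PP^{(\bar\alpha)}(|\widetilde S_k|\ge v)\le Ce^{-cv^2/k}$. Note that $|W_k|\le1$, and $1+\bar\alpha W_k\ge1-\bar\alpha>0$. A second-order Taylor expansion with uniform remainder then gives
\[
\log\frac{1+\alpha W_k}{1+\bar\alpha W_k}
=(\alpha-\bar\alpha)W_k-\frac{\alpha^2-\bar\alpha^2}{2}W_k^2+\theta_k,\qquad |\theta_k|\le C|W_k|^3 ,
\]
and $\alpha-\bar\alpha=\alpha^2-\bar\alpha^2=\delta$ (because $\alpha+\bar\alpha=1$). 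This is the cancellation referred to in \eqref{exactlogZn}.

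Next I will rewrite the linear term using $\widetilde S_{k+1}^2=\widetilde S_k^2+2X_{k+1}\widetilde S_k+1$, which gives $W_k=(\widetilde S_{k+1}^2-\widetilde S_k^2-1)/(2k)$. Summation by parts yields
\[
\sum_{k=1}^{n-1}W_k=\frac{\widetilde S_n^2}{2(n-1)}-\frac{\widetilde S_1^2}{2}+\sum_{k=2}^{n-1}\frac{\widetilde S_k^2}{2k(k-1)}-\frac12\sum_{k=1}^{n-1}\frac1k .
\]
Multiply by $\delta$ and subtract $\frac\delta2\sum_k\widetilde S_k^2/k^2$. In the result:
\begin{itemize}
\item the harmonic sum cancels $n^{\delta/2}$ up to a convergent constant;
\item $\delta\widetilde S_n^2/(2(n-1))$ cancels the Gaussian factor in \eqref{defZnlikelihood} up to $O(\widetilde S_n^2/n^2)\le O(1)$, which tends to $0$ a.s.;
\item the quadratic terms combine to $\frac\delta2\sum_k\widetilde S_k^2\bigl(\frac1{k(k-1)}-\frac1{k^2}\bigr)=O\bigl(\sum_k\widetilde S_k^2/k^3\bigr)$.
\end{itemize}
Hence
\[
\log\mathcal Z_n=\text{const}+o(1)+\sum_{k<n}\bigl(\theta_k+O(\widetilde S_k^2/k^3)\bigr).
\]
By the sub-Gaussian tail and Borel--Cantelli, $|\widetilde S_k|\le k^{1/2}\log k$ eventually a.s. Both $\sum_k|\widetilde S_k|^3/k^3$ and $\sum_k\widetilde S_k^2/k^3$ therefore converge absolutely. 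So $\log\mathcal Z_n$ converges a.s. to a finite limit, and $\mathcal Z_\infty\in(0,\infty)$.

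The main obstacle is \eqref{ZnLqbound}, because $|\widetilde S_k|^3/k^3$ is not exponentially integrable term by term via the Gaussian tail. I will use $|\widetilde S_k|\le k$ and split at $|\widetilde S_k|=k^{3/4}$.
\begin{itemize}
\item On $\{|\widetilde S_k|\le k^{3/4}\}$ the summand is at most $C\widetilde S_k^2/k^{9/4}=Ck^{-5/4}\,\widetilde S_k^2/k$.
\item On the complement it is at most $C$.
\end{itemize}
Thus $\log\mathcal Z_n\le C+C\sum_k k^{-5/4}Y_k+CN$, where $Y_k=\widetilde S_k^2/k$ and $N:=\#\{k:|\widetilde S_k|>k^{3/4}\}$. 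The terms with $k\le k_0$ are bounded because $|W_k|\le1$, so the $Y$-sum may start at a large $k_0$. Let $A:=\sum_{k\ge k_0}k^{-5/4}$. Jensen with weights $k^{-5/4}/A$ gives $\EE\exp(\lambda\sum_{k\ge k_0} k^{-5/4}Y_k)\le\sup_k\EE e^{\lambda AY_k}$, which is finite once $\lambda A<c$; choosing $k_0$ large makes this possible for any fixed $\lambda$. For $N$ I use $N\le\sup\{k:|\widetilde S_k|>k^{3/4}\}$, so $\PP(N\ge K)\le\sum_{k\ge K}Ce^{-ck^{1/2}}$ and $e^{\lambda N}$ is integrable for every $\lambda$. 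Combining the two pieces by Cauchy--Schwarz gives $\sup_n\EE\mathcal Z_n^q<\infty$ for some (indeed every) $q>1$. Uniform integrability then upgrades the a.s. convergence to $L^1$. Finally, $c_\alpha=\EE\mathcal Z_\infty$ is finite by the $L^q$ bound and positive because $\mathcal Z_\infty>0$ a.s.
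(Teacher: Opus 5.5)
Your almost-sure convergence argument is correct and matches the paper's. You use the same cancellation $\alpha-\bar\alpha=\alpha^2-\bar\alpha^2=\delta$ and the same Abel summation leading to \eqref{exactlogZn}; Borel--Cantelli on the sub-Gaussian tail of Lemma~\ref{empiconvunifprop} replaces the law of the iterated logarithm, which is fine.

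The gap is in \eqref{ZnLqbound}: the claim that $e^{\lambda N}$ is integrable is false. Your bound $\PP(N\geq K)\leq\sum_{k\geq K}Ce^{-c\sqrt k}\asymp\sqrt K\,e^{-c\sqrt K}$ is only a stretched-exponential tail, which gives no exponential moment. Moreover, $N$ genuinely has such a tail. Forcing $X_1=\cdots=X_j=1$ costs only $e^{-cj}$ under $\PP^{(\bar\alpha)}$. Afterwards, with conditional probability at least $1/2$ (Doob's inequality for the martingale $\widetilde S_k/\prod_{i<k}(1+\bar\alpha/i)$), one has $\widetilde S_k\geq c\,j^{1-\bar\alpha}k^{\bar\alpha}>k^{3/4}$ for all $j\leq k\leq c\,j^{\gamma}$, where $\gamma=(1-\bar\alpha)/(3/4-\bar\alpha)>1$. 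Hence $\PP(N\geq K)\geq e^{-CK^{1/\gamma}}$, so $\EE e^{\lambda N}=\infty$ for every $\lambda>0$, and your Cauchy--Schwarz bound is infinite.

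This cannot be fixed by better bookkeeping, because your argument, if valid, would give \eqref{ZnLqbound} for every $q>1$, and that is false. On $E_j:=\{X_1=\cdots=X_j=1\}$ one has $\mathcal Z_j=j^{\delta/2}\bigl(\frac{1+\alpha}{1+\bar\alpha}\bigr)^{j-1}e^{-\delta j/2}$ and $\PP^{(\bar\alpha)}(E_j)=\frac12\bigl(\frac{1+\bar\alpha}{2}\bigr)^{j-1}$. So $\EE^{(\bar\alpha)}\mathcal Z_j^q$ grows geometrically once $\bigl(\frac{1+\alpha}{1+\bar\alpha}\bigr)^q e^{-q\delta/2}\,\frac{1+\bar\alpha}{2}>1$; for $\alpha=0.9$ and $q=5$ this factor is about $1.14$.

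The underlying problem is that the pathwise bound $\log\mathcal Z_n\leq C+C\sum_k|W_k|^3$ discards the signs of the Taylor remainders. On paths where $|W_k|$ is of order one for order $j$ steps, it charges $e^{qCj}$ against a probability $e^{-cj}$, and nothing in your argument compares these constants.

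The missing idea is the paper's one-step computation $\EE^{(\bar\alpha)}(\mathcal Z_{n+1}^q\mid\FF_n)=C_{n,q}\Psi_{q,r_n}(x_n)\,\mathcal Z_n^q$:
\begin{enumerate}
\item At $q=1$, Hoeffding's lemma gives $\Psi_{1,1}\leq1$, strictly for $x\neq0$, and $\log\Psi_{1,1}$ has a negative quartic leading term.
\item Since $\partial_x^2\log\Psi_{q,1}(0)=0$ for every $q$, continuity in $q$ on the compact range $|x|\leq1$ gives $\log\Psi_{q,1}(x)\leq-cx^4$ for some $q>1$.
\item Replacing $1$ by $r_n$ costs only $O(x^2/n)$.
\end{enumerate}
Hence $\EE\mathcal Z_{n+1}^q\leq e^{C/n^2}\EE\mathcal Z_n^q$, uniformly over all values of $x_n$, including the large-deviation values that break your bound.
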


\begin{proof}
Set $Q_k:=\widetilde S_k^2/k$ and
\[
r_\alpha(w):=\log\left(\frac{1+\alpha w}{1+\bar\alpha w}\right)-\delta w+\frac\delta2w^2.
\]
Since $\alpha+\bar\alpha=1$, Taylor's formula gives $|r_\alpha(w)|\leq C(\alpha)|w|^3$ for $w\in[-1,1]$. The identity
\[
\delta W_k-\frac\delta2W_k^2
=\frac\delta2\left(1+\frac1k\right)(Q_{k+1}-Q_k)-\frac\delta{2k}
\]
and Abel summation yield, for $n\geq2$,
\begin{equation}
\label{exactlogZn}
\begin{aligned}
\log\mathcal Z_n
={}&\frac\delta2\left[
\frac{Q_n-n}{n-1}
+\sum_{k=2}^{n-1}\frac{Q_k-1}{k(k-1)}
+\log n-\sum_{k=1}^{n-1}\frac1k
\right]
+\sum_{k=1}^{n-1}r_\alpha(W_k).
\end{aligned}
\end{equation}
Under $\PP^{(\bar\alpha)}$, the law of the iterated logarithm in \cite[Theorem 3.2]{MR3741953} gives
\[
|\widetilde S_n|=O(\sqrt{n\log\log n})\qquad\text{a.s.}
\]
Consequently, in \eqref{exactlogZn}, the two series
\[
\sum_{k=2}^{n-1}\frac{Q_k-1}{k(k-1)} \quad \text{and }\  \sum_{k=1}^{n-1}r_\alpha(W_k)
\]
 converge absolutely, while $(Q_n-n)/(n-1)\to-1$ and $\log n-\sum_{k<n}k^{-1}$ converges. Hence $\mathcal Z_n\to\mathcal Z_\infty\in(0,\infty)$ almost surely.

It remains to prove uniform integrability. Given $\FF_n$, write $x_n:=\widetilde S_n/n$ and $r_n:=n/(n+1)$. A direct calculation from \eqref{condtranStild} and \eqref{defZnlikelihood} gives
\begin{equation}
\label{Znqrecursion}
\frac{\EE^{(\bar\alpha)}(\mathcal Z_{n+1}^q\mid\FF_n)}{\mathcal Z_n^q}
=C_{n,q}\Psi_{q,r_n}(x_n),
\end{equation}
where
\[
C_{n,q}:=\left(1+\frac1n\right)^{q\delta/2}
\exp\left(-\frac{q\delta}{2(n+1)}\right)=\exp(O(n^{-2}))
\]
and
\[
\Psi_{q,r}(x):=\frac{e^{q\delta r x^2/2}}2
\sum_{\varepsilon=\pm1}
(1+\bar\alpha\varepsilon x)^{1-q}
(1+\alpha\varepsilon x)^q e^{-q\delta r\varepsilon x}, \quad (q,r,x) \in \RR \times \RR \times \left(-\frac{1}{\alpha},\frac{1}{\alpha}\right).
\]
For $q=r=1$,
\[
\Psi_{1,1}(x)=e^{\delta x^2/2}
\bigl(\cosh(\delta x)-\alpha x\sinh(\delta x)\bigr)\leq1.
\]
Indeed, if $\xi_x\in\{-1,1\}$ has law $\PP(\xi_x=\varepsilon)=(1+\alpha\varepsilon x)/2$, then the left-hand side is $e^{\delta x^2/2}\EE e^{-\delta x\xi_x}$. Since $\EE\xi_x=\alpha x$, Hoeffding's lemma gives
\[
\log\Psi_{1,1}(x)
\leq\left(\frac\delta2-\alpha\delta+\frac{\delta^2}{2}\right)x^2=0.
\]
The inequality is strict for $x\neq 0$ (see e.g. \cite[the proof of Lemma 2.2]{boucheron2013concentration}), and
\begin{equation}
\label{Psiquartic}
\log\Psi_{1,1}(x)
=-\frac{\delta^2(\delta^2+2\delta+3)}{24}x^4+O(x^6)
\qquad(x\to0).
\end{equation}
Observe that $\log \Psi_{q,r}(x)$ is smooth in $(q,r,x)$ on a neighborhood of $[0,q_0] \times [1/2,1] \times [-1,1]$ for any $q_0>1$. For each fixed $(q,r)$, it is even in $x$ and satisfies $\Psi_{q,r}(0)=1$. Moreover, since $\alpha+\bar\alpha=1$ and $\alpha-\bar\alpha=\delta$,
\[
\partial_x^2\log\Psi_{q,1}(0)
=q(\delta+\bar\alpha^2-\alpha^2)=0
\]
for every $q$. Thus the expansion of $\log\Psi_{q,1}$ starts at order $x^4$, and by \eqref{Psiquartic} its quartic coefficient is negative at $q=1$. Now consider $f_{q}(x):=(\log\Psi_{q,1}(x))/x^4$ for $(q,x) \in [0,q_0] \times [-1,1]$ where $f_q(0):=\lim_{x\to 0} f_q(x)$. Note that $f_1(x)$ is negative and bounded away from $0$ for $x \in [-1,1]$ by continuity in $x$ and compactness. Since $f_q(x)$ is continuous in $(q,x)$, we may choose $q>1$ sufficiently close to $1$ and $c>0$ such that
\begin{equation}
\label{logPsiq1bound}
\log\Psi_{q,1}(x)\leq-cx^4, \qquad |x|\leq 1.
\end{equation}

It remains to control the dependence on $r$. Write $w_\varepsilon:=(1+\bar\alpha\varepsilon x)^{1-q}(1+\alpha\varepsilon x)^q e^{-q\delta r\varepsilon x}$, so that $\Psi_{q,r}(x)=\frac{e^{q\delta rx^2/2}}2(w_++w_-)$. Differentiating in $r$ gives
\[
\partial_r\log\Psi_{q,r}(x)
=\frac{q\delta x^2}{2}-q\delta x\,\frac{w_+-w_-}{w_++w_-}.
\]
Now $w_+-w_-$ is odd in $x$, while $w_++w_-$ is even and bounded away from $0$ on $[-1,1]\times[1/2,1]$, hence, as $x \to 0$, the ratio $(w_+-w_-)/(w_++w_-)$ is $O(x)$, and $\partial_r\log\Psi_{q,r}(x)=O(x^2)$ uniformly in $r\in[1/2,1]$. As $r_n-1=-\frac1{n+1}$, the mean value theorem yields
\begin{equation}
\label{logPsirdiff}
|\log\Psi_{q,r_n}(x)-\log\Psi_{q,1}(x)|
\leq \frac{C}{n}x^2.
\end{equation}
Combining \eqref{logPsiq1bound} and \eqref{logPsirdiff}, and setting $y=x^2$,
\[
\log\Psi_{q,r_n}(x)
\leq-cx^4+\frac Cn x^2
=-cy^2+\frac Cn y
\leq \max_{y\geq0}\Bigl(-cy^2+\frac Cn y\Bigr)
=\frac{C^2}{4c\,n^2}
=:C_1n^{-2}.
\]
By \eqref{Znqrecursion} and $C_{n,q}=\exp(O(n^{-2}))$, this gives
\[
\EE^{(\bar\alpha)}(\mathcal Z_{n+1}^q\mid\FF_n)
\leq C_{n,q}e^{C_1n^{-2}}\mathcal Z_n^q
\leq e^{C_2/n^2}\mathcal Z_n^q.
\]
Iteration proves \eqref{ZnLqbound}. The almost-sure convergence is therefore also in $L^1$, completing the proof.
\end{proof}

Recall the noise-reinforced Brownian motion in Proposition~\ref{prop1dlimit}(i). Under the reference law $\PP^{(\bar\alpha)}$, the memory parameter is $p_{\rm ref}=(1+\bar\alpha)/2<3/4$, so $2p_{\rm ref}-1=1-\alpha$ and $3-4p_{\rm ref}=\delta$. We denote the corresponding limiting process by $G^{(\alpha)}$ and realize it as
\begin{equation}
\label{defGalphaintro}
G_t^{(\alpha)}:=\frac{t^{1-\alpha}}{\sqrt\delta}B_{t^\delta},\qquad t\geq0.
\end{equation}
Recall the positive constant $c_\alpha$ defined in \eqref{defcalpha}. Define
\begin{align}
\label{defHexit}
\mathcal H_\alpha^{\rm exit}(t)
&:=c_\alpha t^{-\delta/2}
\EE\left[
\exp\left(\frac{\delta(G_t^{(\alpha)})^2}{2t}\right)
\1_{\{\sup_{0\leq s\leq t}|G_s^{(\alpha)}|<1\}}
\right],\\
\label{defHcov}
\mathcal H_\alpha^{\rm cov}(t)
&:=c_\alpha t^{-\delta/2}
\EE\left[
\exp\left(\frac{\delta(G_t^{(\alpha)})^2}{2t}\right)
\1_{\{\Osc_t(G^{(\alpha)})<1\}}
\right].
\end{align}

The following Proposition \ref{thmsuperprofiles} implies Proposition \ref{propcover1d} (ii).

\begin{proposition}
\label{thmsuperprofiles}
For every $t>0$,
\begin{align}
\label{superexitprofile}
\lim_{m\to\infty}m^\delta\PP^{(\alpha)}(\sigma_m>\lfloor m^2t\rfloor)
&=\mathcal H_\alpha^{\rm exit}(t),\\
\label{supercoverprofile}
\lim_{L\to\infty}L^\delta\PP^{(\alpha)}(\tau_{\rm cov}>\lfloor L^2t\rfloor)
&=\mathcal H_\alpha^{\rm cov}(t).
\end{align}
Consequently,
\begin{align}
\label{superexitmean}
\lim_{m\to\infty}\frac{\EE^{(\alpha)}\sigma_m}{m^{2-\delta}}
&= C_{\rm exit}(p):=\int_0^\infty\mathcal H_\alpha^{\rm exit}(t)\,dt,\\
\label{supercovermean}
\lim_{L\to\infty}\frac{\EE^{(\alpha)}\tau_{\rm cov}}{L^{2-\delta}}
&= C_+(p):=\int_0^\infty\mathcal H_\alpha^{\rm cov}(t)\,dt.
\end{align}
The limits in \eqref{superexitmean} and \eqref{supercovermean} are positive and finite.
\end{proposition}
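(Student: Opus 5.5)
The plan is a change of measure to $\PP^{(\bar\alpha)}$ followed by a functional limit theorem there. Fix $t>0$, set $n=\lfloor m^2t\rfloor$, and let $A_n$ be the event $\{\sigma_m>n\}$ (respectively $\{\Osc_n(\widetilde S)<L-1\}$ for the cover time, by \eqref{coverrangeidentity}). Since $A_n\in\FF_n$ and $\widetilde{\mathscr R}_n$ is the likelihood ratio on $\FF_n$,
\[
m^\delta\PP^{(\alpha)}(A_n)=m^\delta\EE^{(\bar\alpha)}\bigl[\widetilde{\mathscr R}_n\1_{A_n}\bigr]
=\frac{m^\delta}{n^{\delta/2}}\,\EE^{(\bar\alpha)}\Bigl[\mathcal Z_n\exp\Bigl(\frac{\delta\widetilde S_n^2}{2n}\Bigr)\1_{A_n}\Bigr],
\]
and $m^\delta n^{-\delta/2}\to t^{-\delta/2}$. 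On $A_n$ we have $|\widetilde S_n|\le m$ (or $\le L$), so the exponential factor is bounded by $e^{\delta m^2/(2n)}\le C(t)$. It therefore suffices to prove
\[
\EE^{(\bar\alpha)}\bigl[\mathcal Z_n\, F_m\bigr]\to c_\alpha\,\EE\bigl[F(G^{(\alpha)})\bigr],
\]
where $F_m$ is the bounded functional $\exp(\delta \widetilde S_n^2/(2n))\1_{A_n}$ of the rescaled path $s\mapsto \widetilde S_{\lfloor m^2 s\rfloor}/m$.

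For this I would proceed in three steps.

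\emph{Step 1 (invariance principle).} Under $\PP^{(\bar\alpha)}$, whose memory parameter is $p_{\rm ref}<3/4$, the functional CLT for diffusive ERW (the noise-reinforced Brownian motion limit behind Proposition~\ref{prop1dlimit}(i)) gives $(\widetilde S_{\lfloor m^2 s\rfloor}/m)_s\Rightarrow G^{(\alpha)}$ in $D([0,\infty))$.

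\emph{Step 2 (asymptotic independence).} I need the joint convergence of this path and $\mathcal Z_n$ to (path, $\mathcal Z_\infty$), with the two limits independent. The heuristic is that $\mathcal Z_\infty$ is determined by the early increments, while the macroscopic path on scale $m^2$ is asymptotically independent of any fixed initial segment. Concretely, fix $K$ and replace $\mathcal Z_n$ by $\mathcal Z_K$; by Proposition~\ref{propnormalizedlikelihood} the error $\EE|\mathcal Z_n-\mathcal Z_K|$ is small uniformly in $n\ge K$ once $K$ is large. Then condition on $\FF_K$. Given $\FF_K$, the rescaled future path still converges to $G^{(\alpha)}$, because the initial configuration $\Delta_K$ affects $\widetilde S_n/m$ only through a term of order $K^{\bar\alpha}(n)^{1-\bar\alpha}/m\cdot n^{-1/2}\cdot$const, which tends to $0$; this is exactly the $t^{1-\alpha}$ drift contribution with vanishing coefficient. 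Combined with \eqref{ZnLqbound}, which gives uniform integrability of $\mathcal Z_n F_m$, we get the limit $\EE[\mathcal Z_\infty]\,\EE[F(G^{(\alpha)})]=c_\alpha\EE[F(G^{(\alpha)})]$.

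\emph{Step 3 (continuity of the functional).} The functional is discontinuous, so I must check that the boundary events carry zero mass: $\PP(\sup_{s\le t}|G_s|=1)=0$, $\PP(\Osc_t(G)=1)=0$, and the limit is not affected by the lattice shift $L-1$ versus $L$. These follow from the Brownian representation \eqref{defGalphaintro} via the time change. Together with the continuous mapping theorem, this yields \eqref{superexitprofile} and \eqref{supercoverprofile}.

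\emph{Means.} Here $b_p(L)=L^{4p-3}=L^\delta$, so the profile limits are exactly $F_p^\bullet=\mathcal H_\alpha^\bullet$. Corollary~\ref{corprofiletomean} then gives \eqref{superexitmean} and \eqref{supercovermean}, with finiteness coming from the domination in Proposition~\ref{proptailUI1d}. Positivity holds because $c_\alpha>0$ and the indicator events have positive probability for every $t$.

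\emph{Main obstacle.} The hard step is Step 2, the conditional functional CLT given $\FF_K$ uniformly over the initial data. I would first try the forest construction under $\PP^{(\bar\alpha)}$: the contribution of the first $K$ steps to $\widetilde S_n$ is a martingale-type term of size $O(K\, n^{\bar\alpha}K^{-\bar\alpha})=o(m)$, while the rest evolves like an ERW started from a bounded state.
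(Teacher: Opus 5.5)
Your proposal is correct. It has the same skeleton as the paper's proof: change of measure to $\PP^{(\bar\alpha)}$, the normalized likelihood $\mathcal Z_n$ with the exponential factor bounded on the event, zero mass of the boundary events, and Corollary~\ref{corprofiletomean} for the means.

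The genuine difference is how you obtain the joint convergence of the rescaled path and $\mathcal Z_\infty$ with independent limits. The paper imports it from a stable functional CLT, \eqref{stableFCLTreference}, and then truncates $\mathcal Z_\infty$ at level $K$. You instead replace $\mathcal Z_n$ by the $\FF_K$-measurable $\mathcal Z_K$. This is legitimate because $(\mathcal Z_n)$ is Cauchy in $L^1$ and $F_m$ is bounded. It reduces the problem to an FCLT conditional on each of the finitely many atoms of $\FF_K$, so you are in effect proving the needed mixing convergence by hand.

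The step you flag as the main obstacle does close with the forest, but the operative point is independence, not ``an ERW started from a bounded state.'' For $K<k\le n$, write $\widetilde S_k-\widetilde S_K=U_k+V_k$, where $V$ sums the increments at vertices whose forest root lies in $[K]$.
\begin{itemize}
\item The number $D_n$ of such vertices is a function of $(\xi_j,u_j)_{K<j\le n}$. It satisfies $\EE(K+D_n)=K\prod_{j=K+1}^{n}(1+\bar\alpha/(j-1))\asymp K^{1-\bar\alpha}n^{\bar\alpha}=o(\sqrt n)$, which is your size estimate. Hence $\sup_{k\le n}|V_k|\le D_n$ is negligible under any conditioning on $\FF_K$.
\item $U$ is a function of $(\xi_j,u_j,g_j)_{K<j\le n}$ alone, so it is independent of $\FF_K$. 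It lies within $K+D_n$ of $\widetilde S$ in sup norm, so it converges to $G^{(\alpha)}$ unconditionally, and hence on every atom.
\end{itemize}
Your route is self-contained given only the ordinary FCLT \eqref{SkorkhodERW}. The paper's route is shorter but rests on an external stable-convergence theorem.

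One minor point concerns the boundary events. The paper notes that $\sup_{s\le t}|\phi(s)|$ and $\Osc_t(\phi)$ are both norms on $C_0([0,t])$, so their Gaussian laws have no atoms; this covers every $t$ at once. Your time-change and immediate-crossing argument handles the supremum. For the oscillation, you must also exclude that $\Osc(G^{(\alpha)})$ first reaches $1$ exactly at time $t$. Scaling alone gives only a.e.\ $t$, which suffices for the means but not for \eqref{supercoverprofile} at every $t$.
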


\begin{proof}
Applying the stable functional central limit theorem in \cite[Theorem 1.3]{Tokumitsu2026stable} with memory parameter $p_{\rm ref}=(1+\bar\alpha)/2<3/4$ and auxiliary variable $\mathcal Z_\infty$ in Proposition \ref{propnormalizedlikelihood}, one has, under $\PP^{(\bar\alpha)}$, in the product space $D([0,\infty)) \times \RR$,
\begin{equation}
\label{stableFCLTreference}
\left(\left(\frac{\widetilde S_{\lfloor ns\rfloor}}{\sqrt n}\right)_{s\geq0},\mathcal Z_\infty\right)
\Longrightarrow\left((G_s^{(\alpha)})_{s\geq0},\mathcal Z_\infty\right),
\end{equation}
where, on the right-hand side, $G^{(\alpha)}$ is independent of $\mathcal Z_\infty$. We use this convergence with $n=m^2$. Fix $t>0$ and put $N_m:=\lfloor m^2t\rfloor$. By change of measure and \eqref{defZnlikelihood},
\begin{equation}
    \label{eqmdelPalpha}
m^\delta\PP^{(\alpha)}(\sigma_m>N_m)
=\left(\frac{m^2}{N_m}\right)^{\delta/2}
\EE^{(\bar\alpha)}\left[
\mathcal Z_{N_m}
\exp\left(\frac{\delta\widetilde S_{N_m}^2}{2N_m}\right);
\sigma_m>N_m \right].
\end{equation}
On $\{\sigma_m>N_m\}$ we have $|\widetilde S_{N_m}|<m$, hence, 
\[
1\leq\exp\Bigl(\frac{\delta\widetilde S_{N_m}^2}{2N_m}\Bigr)
\leq\exp\Bigl(\frac{\delta m^2}{2N_m}\Bigr)\leq C(t,\alpha).
\]
By Proposition~\ref{propnormalizedlikelihood}, $\mathcal Z_n\to\mathcal Z_\infty$ in $L^1(\PP^{(\bar\alpha)})$. Combined with the uniform bound above, replacing $\mathcal Z_{N_m}$ by $\mathcal Z_\infty$ in (\ref{eqmdelPalpha}) changes the right-hand side by at most
\[
C(t,\alpha)\,\EE^{(\bar\alpha)}|\mathcal Z_{N_m}-\mathcal Z_\infty|=o(1).
\]
It remains to compute the resulting limit. Write $\phi_m:=(m^{-1}\widetilde S_{\lfloor m^2s\rfloor})_{s\geq0}$. For $\phi\in D([0,\infty))$ with $\phi(0)=0$, define
\[
f_t(\phi):=\exp\left(\frac{\delta\phi(t)^2}{2t}\right)
\1_{\{\sup_{0\leq s\leq t}|\phi(s)|<1\}}.
\]
This functional is bounded. Convergence in the Skorokhod topology to a continuous path implies uniform convergence on compact time intervals. Hence $f_t$ is continuous at every continuous path outside the boundary event $\{\sup_{s\leq t}|\phi(s)|=1\}$. On $C_0([0,t]):=\{\phi\in C([0,t]):\phi(0)=0\}$, both $\|\phi\|_\infty$ and $\Osc_t(\phi)$ are norms, with
\[
\|\phi\|_\infty\leq\Osc_t(\phi)\leq2\|\phi\|_\infty.
\]
The norm of a nontrivial centered Gaussian random element has no atoms at positive values, see e.g. \cite{Rhee1984}. Thus, the boundary events for both the supremum and the oscillation have probability zero under the law of $G^{(\alpha)}$.

Apply \eqref{stableFCLTreference} with $\mathcal Z_\infty$ truncated at level $K$. Since $N_m/m^2\to t$, we obtain
\[
\lim_{m\to\infty}\EE^{(\bar\alpha)}
\bigl[(\mathcal Z_\infty\wedge K)f_t(\phi_m)\bigr]=\EE^{(\bar\alpha)}[\mathcal Z_\infty\wedge K]\, \EE[f_t(G^{(\alpha)})].
\]
Since $|\phi_m(t)|<1$ and $|\widetilde S_{N_m}|<m$ on $\{\sigma_m>N_m\}$ and $\mathcal Z_\infty$ is integrable, dominated convergence implies that as $m \to \infty$,
\[
\EE^{(\bar\alpha)}\left[ (\mathcal Z_\infty\wedge K) \left| \exp\left(\frac{\delta\widetilde S_{N_m}^2}{2N_m}\right) -\exp\left(\frac{\delta\widetilde S_{N_m}^2}{2m^2 t}\right)\right|; \sigma_m>N_m \right] = o(1),
\]
 uniformly in $K$.
The truncation error, before and after passage to the limit, is bounded by
\[
C(t,\alpha)\EE^{(\bar\alpha)}[(\mathcal Z_\infty-K)_+]\longrightarrow0
\qquad(K\to\infty),
\]
since $\mathcal Z_\infty$ is integrable. Letting $K\to\infty$ and recalling that $(m^2/N_m)^{\delta/2}\to t^{-\delta/2}$ proves \eqref{superexitprofile}.

For \eqref{supercoverprofile}, we use the convergence (\ref{stableFCLTreference}) with $n=L^2$. Put $N_L:=\lfloor L^2t\rfloor$. On $\{\tau_{\rm cov}>N_L\}$, at time $N_L$, the range of $\tilde{S}/L$ is smaller than $1-1/L$ by the definition \eqref{coverrangeidentity}, and in particular, $|\tilde{S}_{N_L}| \leq L$. The same argument as above applies, and the boundary event $\{\Osc_t(G^{(\alpha)})=1\}$ has probability zero. Finally, \eqref{superexitmean} and \eqref{supercovermean} follow from Corollary~\ref{corprofiletomean}. By \eqref{defGalphaintro} and the positive probability that Brownian motion stays in a bounded interval, for every $t>0$,
\[
\PP\left(\sup_{0\leq s\leq t}|G_s^{(\alpha)}|<\frac12\right)>0.
\]
This event is contained in both events in \eqref{defHexit} and \eqref{defHcov}. Thus, both constants are positive.
\end{proof}

\subsection{The critical regime: embedding into a Brownian motion} 
\label{seccritBM}
We assume that $p=3/4$, i.e., $\alpha=1/2$, in this subsection. It is direct to check that $M=(M_n)$ defined below is a martingale. For $n\geq1$,
\begin{equation}
  \label{martMnp34}
 M_n:=a_n \widetilde S_n, \quad \text{where}\ a_n: =\frac{\Gamma(n)}{\Gamma(n+1/2)} = \frac{1}{\sqrt{n}} + O\left(\frac{1}{n}\right),
\end{equation}
with the convention that $M_0=0$. Fang \cite{Fang2024} showed that $M$ can be embedded into a standard Brownian motion $B=(B_t)_{t\geq 0}$ as follows: Set $T_0 := 0$ and define recursively
\[
T_{n+1}=\inf\left\{t> T_n: B_t-B_{T_n} = -\frac{B_{T_n}}{2n+1}\pm a_{n+1}\right\}, \quad n \in \NN.
\]
Then, 
\[
(M_n)_{n\in \NN} \stackrel{(d)}{=} (B_{T_n})_{n\in \NN}.
\]
Without loss of generality, we shall henceforth work with a realization of $\widetilde S_n$ such that the above relation holds almost surely. Since $|\tilde S_n| \leq n$, one has 
\begin{equation}
    \label{estMBan}
   \frac{|B_{T_n}|}{2n+1} \leq \frac{a_{n+1}}{2}, \quad |M_{n+1}-M_{n}|\leq 2a_{n+1}
\end{equation}

Define $A_0:=0$ and $A_n:= \sum_{k=1}^n a_k^2$, $n \geq 1$. By \eqref{martMnp34}, as $n\to \infty$, 
\[
    A_n = \log n + C+   O\left(\frac{1}{\sqrt{n}}\right).
\]
 Thus, there exists a positive constant $C$ such that for any $1< \ell \leq n$, one has
\begin{equation}
    \label{Ankest}
|A_n -A_{\ell} -(\log n - \log \ell)|  \leq  \frac{C}{\sqrt{\ell}}.
\end{equation}

The following lemma gives a conditional estimate for the increments of $T_n-A_n$. We write $(\FF_t^B)_{t\geq 0}$ for the natural filtration of $B$.
\begin{lemma}
  \label{lemconditionalprop42}
  For any positive integer $r$, there exists a positive constant $C(r)$ such that for any integers $1< \ell \leq n$,
  \[
  \mathbb{E}\left(\sup _{\ell \leq k \leq n}\left|\left(T_k-T_{\ell}\right)-\left(A_k-A_{\ell}\right)\right|^{2 r}\mid \FF_{T_\ell}^B\right)     \leq C(r)\left(\ell^{-r}+\left(\frac{1+\left|B_{T_{\ell}}\right|^2}{\ell}\right)^{2 r}\right)
  \]
\end{lemma}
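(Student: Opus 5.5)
The plan is to split $(T_k-T_\ell)-(A_k-A_\ell)$ into a martingale part and a nonpositive drift, and bound each separately. Write $\tau_j:=T_{j+1}-T_j$ and $u_j:=-B_{T_j}/(2j+1)$. By the strong Markov property, conditionally on $\FF^B_{T_j}$ the variable $\tau_j$ is the exit time of a Brownian motion started at $0$ from the interval $(u_j-a_{j+1},u_j+a_{j+1})$. By \eqref{estMBan} this interval contains $0$, since $|u_j|\le a_{j+1}/2$. The classical formula for the exit time of Brownian motion from an interval gives
\[
\EE(\tau_j\mid\FF^B_{T_j})=(a_{j+1}-u_j)(a_{j+1}+u_j)=a_{j+1}^2-\frac{B_{T_j}^2}{(2j+1)^2}.
\]
By Brownian scaling, $\EE(\tau_j^{s}\mid\FF^B_{T_j})\le C(s)a_{j+1}^{2s}$ for every $s\geq1$, since the exit time from $(u_j-a_{j+1},u_j+a_{j+1})$ is dominated by the exit time from the interval of radius $2a_{j+1}$ centred at $0$. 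Hence, for $k\ge\ell$,
\[
(T_k-T_\ell)-(A_k-A_\ell)=N_k-D_k,\qquad N_k:=\sum_{j=\ell}^{k-1}\bigl(\tau_j-\EE(\tau_j\mid\FF^B_{T_j})\bigr),\qquad D_k:=\sum_{j=\ell}^{k-1}\frac{B_{T_j}^2}{(2j+1)^2}.
\]

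For the martingale part, $(N_k)_{k\ge\ell}$ is a martingale with respect to $(\FF^B_{T_k})$, started at $0$ at time $\ell$. I will apply the conditional Burkholder--Davis--Gundy inequality, or Rosenthal's inequality, given $\FF^B_{T_\ell}$. Each increment $\xi_j$ satisfies $\EE(|\xi_j|^{2r}\mid\FF^B_{T_j})\le C(r)a_{j+1}^{4r}$. Using $a_j^2\le C/j$, we get $\sum_{j\ge\ell}a_{j+1}^4\le C/\ell$. Therefore
\[
\EE\Bigl(\sup_{\ell\le k\le n}|N_k|^{2r}\Bigm|\FF^B_{T_\ell}\Bigr)\le C(r)\,\EE\Bigl(\Bigl(\sum_{j}\xi_j^2\Bigr)^{r}\Bigm|\FF^B_{T_\ell}\Bigr)\le C(r)\ell^{-r}.
\]
The last step uses Minkowski's inequality in $L^r$: $\|\sum_j\xi_j^2\|_r\le\sum_j\|\xi_j\|_{2r}^2\le C\sum_j a_{j+1}^4$.

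For the drift, $D_k$ is nondecreasing in $k$, so the supremum is bounded by $D_n$. I use $B_{T_j}^2=M_j^2\le 2M_\ell^2+2(M_j-M_\ell)^2$. The first contribution is at most $2M_\ell^2\sum_{j\ge\ell}(2j+1)^{-2}\le CM_\ell^2/\ell$. For the second, I apply Minkowski in $L^{2r}(\cdot\mid\FF^B_{T_\ell})$:
\[
\Bigl\|\sum_{j}\frac{(M_j-M_\ell)^2}{j^2}\Bigr\|_{2r}\le\sum_{j\ge\ell}\frac{\|M_j-M_\ell\|_{4r}^2}{j^2}.
\]
Burkholder's inequality applies because the increments of $M$ are bounded by $2a_{m+1}$, by \eqref{estMBan}. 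It gives $\|M_j-M_\ell\|_{4r}^2\le C(r)(A_j-A_\ell)$, and by \eqref{Ankest} this is at most $C(r)(1+\log(j/\ell))$. Summing, $\sum_{j\ge\ell}(1+\log(j/\ell))j^{-2}\le C/\ell$. Altogether,
\[
\EE(D_n^{2r}\mid\FF^B_{T_\ell})\le C(r)\Bigl(\frac{1+|B_{T_\ell}|^2}{\ell}\Bigr)^{2r},
\]
and the claimed bound follows from $|N-D|^{2r}\le 2^{2r}(|N|^{2r}+|D|^{2r})$.

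The step needing most care is the first one: identifying the conditional mean of $\tau_j$ exactly, and checking that the interval contains the origin with a uniformly comparable radius, so that the scaling moment bounds apply. After that, the rest is standard martingale moment inequalities applied conditionally on $\FF^B_{T_\ell}$ via the strong Markov property.
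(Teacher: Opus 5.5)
Your proposal is correct and follows essentially the same route as the paper. Your $N_k$ and $D_k$ are exactly the paper's $\mathscr N_k-\mathscr N_\ell$ and $V_k-V_\ell$, and both pieces are controlled by conditional BDG plus Minkowski, with $\sum_{j\ge\ell}(1+\log(j/\ell))j^{-2}\le C/\ell$ closing the drift bound; the only cosmetic difference is that you split $M_j^2\le 2M_\ell^2+2(M_j-M_\ell)^2$ before applying Minkowski in $L^{2r}$, whereas the paper first bounds $\EE(B_{T_k}^{2r}\mid\FF^B_{T_\ell})$ and applies the $L^r$ estimate with $2r$ in place of $r$.
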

\begin{proof}
We adapt the argument in the proof of \cite[Proposition 4.2]{Fang2024} to increments after $T_\ell$, using the strong Markov property. The Burkholder--Davis--Gundy inequality and \eqref{estMBan} and \eqref{Ankest} give, for any $k\geq\ell$,
  \[
  \EE ((M_k-M_\ell)^{2r} \mid \FF_{T_\ell}^B)  \leq C(r) (A_k-A_\ell)^r \leq C(r) (1+\log(k/\ell))^r.
  \]
Since $B_{T_k}=B_{T_k}-B_{T_\ell}+B_{T_\ell}$, one has 
 \begin{equation}
  \label{estMk2rcond}
 \EE (B_{T_k}^{2r} \mid \FF_{T_\ell}^B) =\EE (M_k^{2r} \mid \FF_{T_\ell}^B) \leq C(r) (|B_{T_\ell}|^2 + 1+\log(k/\ell))^r.
 \end{equation}
 Define 
 \[
V_n:=\sum_{j=0}^{n-1} \frac{1}{(2j+1)^2} B^2_{T_j}, \quad n\in \NN.
 \]
Then the (conditional) Minkowski inequality and \eqref{estMk2rcond} give
\[
\begin{aligned}
\left(\EE\left((V_n-V_\ell)^r\mid\FF^B_{T_\ell}\right)\right)^{1/r}
&\leq\sum_{j=\ell}^{n-1}
\frac{\left(\EE\left(|B_{T_j}|^{2r}\mid\FF^B_{T_\ell}\right)\right)^{1/r}}{(2j+1)^2}\\
&\leq C(r)\sum_{j=\ell}^{n-1}
\frac{|B_{T_\ell}|^2+1+\log(j/\ell)}{j^2}\\
&\leq C_1(r)\frac{1+|B_{T_\ell}|^2}{\ell},
\end{aligned}
\]
where in the last inequality we used that
 \[
\sum_{j=\ell}^{\infty}\frac{\log (j / \ell)}{j^2}=\sum_{j=\ell}^{\infty}\frac{1}{j^2}\int_1^{j/\ell}\frac{du}{u} =\int_1^{\infty}\sum_{j\ge \ell u}\frac{1}{j^2}\frac{du}{u} \leq \int_1^{\infty} \frac{2}{\ell u^2} du =\frac{2}{\ell}.
 \]
Conditionally on $\FF^B_{T_{j-1}}$, the increment $T_j-T_{j-1}$ is the exit time of Brownian motion from an interval of length $2a_j$. Its conditional mean is
\[
\EE(T_j-T_{j-1}\mid\FF^B_{T_{j-1}})
=a_j^2-\frac{B_{T_{j-1}}^2}{(2j-1)^2}.
\]
Consequently,
\[
\mathscr N_n:=T_n-A_n+V_n,\qquad n\in\NN,
\]
is a martingale. By \eqref{estMBan}, the conditional mean displayed above lies between $0$ and $a_j^2$, so
\[
\begin{aligned}
|\mathscr N_j-\mathscr N_{j-1}|
&=\left|T_j-T_{j-1}-a_j^2+\frac{B^2_{T_{j-1}}}{(2j-1)^2}\right|\\
&\leq T_j-T_{j-1}+a_j^2.
\end{aligned}
\]
Brownian scaling and the uniform moment bounds for exit times from a bounded interval give
\[
\EE\left(|\mathscr N_j-\mathscr N_{j-1}|^{2r}
\mid\FF^B_{T_{j-1}}\right)\leq C(r)a_j^{4r},
\]
see \cite[Lemma 4.4 and (4.2)]{MR4469228} for more details. The conditional Burkholder--Davis--Gundy and Minkowski inequalities therefore yield
\[
\begin{aligned}
\EE\left(\sup_{\ell\leq k\leq n}|\mathscr N_k-\mathscr N_\ell|^{2r}
\mid\FF^B_{T_\ell}\right)&\leq C(r)\EE\left(
\left(\sum_{j=\ell+1}^n|\mathscr N_j-\mathscr N_{j-1}|^2\right)^r
\mid\FF^B_{T_\ell}\right)\\
&\leq C_1(r)\left(\sum_{j=\ell+1}^n a_j^4\right)^r
\leq C_2(r)\ell^{-r}.
\end{aligned}
\]
This is the conditional version of the estimate used in \cite[Lemma 4.6]{MR4469228}.
Applying the estimate for $V_n-V_\ell$ with $2r$ in place of $r$, the desired result follows from
\[
(T_k-T_\ell)-(A_k-A_\ell)=\mathscr N_k-\mathscr N_\ell-(V_k-V_\ell).  \qedhere
\]
\end{proof}

We shall also use the following result on Brownian motion. 

\begin{lemma}
\label{BMlemtube}
  Fix $b>0$. Let $W=x+B$ be a Brownian motion starting at $x \in \RR$. Then there exist positive constants $t(b),c(b),C(b)$ such that for all $t>t(b)$ and all $x\in [-\sqrt{t},\sqrt{t}]$, we have 
  \[
\frac{c(b)}{\sqrt{t}} \leq \PP(|W_s|<b e^{(t-s)/2} \text{ for all } s\in [0,t])  \leq \PP(|W_t|<b ) \leq \frac{C(b)}{\sqrt{t}}. 
  \]
\end{lemma}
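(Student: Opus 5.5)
The middle inequality is immediate: taking $s=t$ in the tube event gives $|W_t|<b$. The upper bound is also easy, since $W_t\sim N(x,t)$ has density at most $(2\pi t)^{-1/2}$, so $\PP(|W_t|<b)\leq 2b/\sqrt{2\pi t}$ uniformly in $x$. The only real content is the lower bound. My plan is to reduce it, by a time reversal and a Doob $h$-transform, to a statement about a Brownian bridge or motion staying inside an expanding curved tube.

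\emph{Reduction to a fixed endpoint.} I first split at an intermediate time. Fix $t_0=t_0(b)$ large, to be chosen, and write $u:=t-s$ for the remaining time. On $[0,t-t_0]$ the constraint is $|W_s|<be^{u/2}$ with $u\geq t_0$, which is a very wide tube. On $[t-t_0,t]$ the tube has width between $b$ and $be^{t_0/2}$. By the Markov property at time $t-t_0$, it suffices to prove two things:
\begin{enumerate}[(a)]
\item with probability at least $c/\sqrt t$, the path stays in the tube on $[0,t-t_0]$ and ends with $|W_{t-t_0}|<b/2$;
\item from any starting point $y$ with $|y|<b/2$, the probability of staying in $|W_{t-t_0+v}|<be^{(t_0-v)/2}$ for $v\in[0,t_0]$ is bounded below by a constant $c(b,t_0)>0$.
\end{enumerate}
Part (b) is immediate, because the tube contains $(-b,b)$ throughout and Brownian motion stays in $(-b,b)$ for time $t_0$ from $|y|<b/2$ with positive probability.

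\emph{Part (a).} The probability that $W_{t-t_0}\in(-b/2,b/2)$ is already of order $1/\sqrt t$ uniformly in $|x|\leq\sqrt t$, since the Gaussian density at $0$ is at least $c/\sqrt t$ when $|x|\leq\sqrt t$. I then need the tube constraint to cost only a constant factor given this endpoint. Conditioning on $W_{t-t_0}=y$ turns the path into a Brownian bridge from $x$ to $y$ of length $T=t-t_0$. The bridge is $x+(y-x)s/T$ plus a standard bridge $\beta_s$, so $|W_s|\leq\sqrt t+|\beta_s|$. Reversing time, the constraint reads $|\beta_u|<be^{u/2}-|x+(y-x)(T-u)/T|$. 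Here the linear part is at most $(b/2)+\sqrt t\,u/T$ near the end $u$ small, and at most $\sqrt t$ overall. The exponential $be^{u/2}$ exceeds $2\sqrt t$ once $u\geq\log(4t/b^2)$. For $u\leq\log t$ the linear term is at most $b/2+\sqrt t\log t/t$, which is $O(b/2)$ for large $t$.

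So the bridge must satisfy $|\beta_u|\leq \tfrac12 be^{u/2}-b/2-o(1)$ for $u\leq\log t$, roughly. For $u\geq\log t$ it must satisfy $|\beta_u|\leq be^{u/2}-\sqrt t$, which is at least $be^{u/2}/2$ beyond $u\geq 2\log(2\sqrt t/b)$. In the gap between these ranges the bound $be^{u/2}-\sqrt t$ may be small, so I would shift the split: use $\sqrt t\,(T-u)/T$ in place of $\sqrt t$, noting that the linear interpolation from $y$ near $0$ grows only like $\sqrt t\,u/T$, which is far below $be^{u/2}/2$ for all $u\geq t_0$ when $t_0$ is large. This is the step that needs care. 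With it, the problem reduces to showing that a Brownian bridge $\beta$ (from $0$ at reversed time $t_0$, length $T$) stays below $\tfrac14 be^{u/2}$ for all $u\geq t_0$, with probability bounded below uniformly in $T$. This follows from a union bound over unit intervals: $\PP(\sup_{[k,k+1]}|\beta|>\tfrac14be^{k/2})\leq 2\exp(-cb^2e^{k}/(k+1))$, since the bridge variance is at most $u$. The sum over $k\geq t_0$ is less than $1/2$ for $t_0$ large.

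Averaging over $y\in(-b/2,b/2)$ with density at least $c/\sqrt t$ then gives (a). I expect the main obstacle to be exactly this bookkeeping: checking that the drift term $\sqrt t\,u/T$ from the bridge's linear interpolation stays below a fixed fraction of $be^{u/2}$ for all $u\geq t_0$ uniformly in $t$. This should hold because $\sqrt t\,u/T\lesssim u/\sqrt t$, while the exponential dominates.
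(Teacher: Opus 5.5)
Your proof is correct, but it bounds the bridge term differently from the paper.

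\textbf{What is shared.} Both arguments write the path as a linear interpolation plus an independent Brownian bridge, and both use the same drift estimate. The interpolation term is at most $|y|+Cu/\sqrt t$ in reversed time $u$, which is dominated by a fixed fraction of $be^{u/2}$. This is the paper's bound $b/8+s/\sqrt t\le\frac b4e^{s/2}$. Your case analysis around $u\approx\log t$ is unnecessary, since this single bound holds for all $u$. The final reduction you reach is consistent, although the time variable in that passage is used loosely (time remaining to $t$ versus to $t-t_0$).

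\textbf{The paper's route.} It pins the endpoint at time $t$ itself, with $|W_t|\le b/8$. It then gets a uniform lower bound for the bridge staying in $\{|\beta_s|\le\frac b2e^{s/2}\}$ from Anderson's inequality applied along nested partitions. This reduces everything to the positivity of $\PP(|B_s|\le\frac b2e^{s/2}\ \forall s\ge0)$, which is proved separately via the Markov property.

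\textbf{Your route.} You pin the endpoint at $t-t_0$, where the tube already has width at least $be^{t_0/2}$. There a crude union bound over unit intervals makes the bridge's exit probability less than $1/2$. The narrow final stretch of length $t_0$ is then paid for by a fixed small-ball probability, through the Markov property at $t-t_0$.

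\textbf{Trade-off.} Your argument avoids Anderson's inequality and the limiting argument over partitions, and it is fully quantitative. The cost is the extra split and a maximal inequality for the bridge that must be uniform in the bridge length $T$.

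\textbf{One justification to fix.} The phrase ``since the bridge variance is at most $u$'' does not by itself control $\sup_{[k,k+1]}|\beta|$. It is easy to repair. Write $\beta_u=B_u-(u/T)B_T$ and bound $\sup_{u\le v}|B_u|$ by the reflection principle. Then use $\PP\bigl((v/T)|B_T|>a/2\bigr)\le2\exp(-a^2T/(8v^2))\le2\exp(-a^2/(8v))$ for $v\le T$. This gives the tail $C\exp(-ca^2/(k+1))$ you need, uniformly in $T$.
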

\begin{proof}
The transition density of Brownian motion is
\begin{equation}
    \label{tranBMpt}
    p_t(x,y) = \frac{1}{\sqrt{2\pi t}} e^{-(y-x)^2/(2t)},
\end{equation}
and is bounded above by $1/\sqrt{t}$. This gives the upper bound. It remains to prove the lower bound. We can write 
\begin{equation}
    \label{Wtsdecom}
    W_{t-s}=x+B_{t-s}=W_t+\frac{s}{t}(x-W_t) + \beta^{(t)}_s, \quad s\in [0,t],
\end{equation}
where
\[
\beta^{(t)}_s := B_{t-s}-\frac{(t-s)}{t}B_t, \quad s\in [0,t],
\]
  is a Brownian bridge from $0$ to $0$ over the time interval $[0,t]$. Observe that $\sup_{s \geq 0} s e^{-s/2} \leq b \sqrt{K}/8$ for some constant $K=K(b)$. If $t\geq K$ and $W_t=x+B_t=y \in [-b/8,b/8]$, then for all $s\in [0,t]$ and all $x\in [-\sqrt{t},\sqrt{t}]$, 
\[
\left|W_t+\frac{s}{t}(x-W_t)\right| \leq \frac{t-s}{t}|y| + \frac{s}{t}|x| \leq \frac{b}{8} + \frac{s}{\sqrt{t}} \leq \frac{b}{4}e^{s/2}.
\]
In particular, by \eqref{Wtsdecom}, $|W_{t-s}| < b e^{s/2}$ for all $s\in [0,t]$ on the event $E_1 \cap E_2$ where
\[
E_1:= \left\{|\beta^{(t)}_s|\leq\frac{b}{2} e^{s/2} \text{ for all } s\in [0,t]\right\} \quad \text{and}\quad E_2:= \left\{|W_t|\leq \frac{b}{8}\right\}.
\]

Observe that $(\beta^{(t)}_s)_{0\leq s \leq t}$ has the same distribution as $(\tilde{\beta}^{(t)}_s)_{0\leq s \leq t}$ where $\tilde{\beta}^{(t)}_s:=B_s-sB_t/t$. Let $0=s_0<s_1<s_2<\dots<s_m=t$ be a partition of $[0,t]$, then applying Anderson inequality (see \cite[Theorem 2 or Corollary 3]{Anderson1955}) to the vector
  \[
(B_{s_0}, B_{s_1}, \dots, B_{s_m}) = (\tilde{\beta}^{(t)}_{s_0}, \tilde{\beta}^{(t)}_{s_1}, \dots, \tilde{\beta}^{(t)}_{s_m}) +B_t \left(\frac{s_0}{t},\frac{s_1}{t}, \dots, \frac{s_m}{t}\right), 
  \]
  and using that $(\tilde{\beta}^{(t)}_s)_{0\leq s \leq t}$ is independent of $B_t$, we have, for any $t>0$,
\begin{equation}
\label{eqandersonbridge}
\begin{aligned}
\PP(|\beta^{(t)}_{s_j}|\leq \frac{b}{2} e^{s_j/2}, 0\leq j \leq m) &=\PP(|\tilde{\beta}^{(t)}_{s_j}|\leq \frac{b}{2} e^{s_j/2}, 0\leq j \leq m) \\
&\geq \PP(|B_{s_j}|\leq \frac{b}{2} e^{s_j/2}, 0\leq j \leq m) \geq \PP(|B_s| \leq \frac{b}{2} e^{s/2}, \forall s\geq 0). 
\end{aligned}
\end{equation}
The last probability in \eqref{eqandersonbridge} is positive. Indeed, since $\sup_{s\geq0}e^{-s/2}|B_s|<\infty$ almost surely, we may choose $T$ sufficiently large that
\[
\PP\left(|B_{T+s}-B_T|<\frac b4e^{(T+s)/2}\text{ for all }s\geq0\right)\geq\frac12.
\]
This event depends only on the increments after time $T$ and is independent of
\[
\left\{\sup_{0\leq s\leq T}|B_s|<\frac b4\right\},
\]
which has positive probability. On their intersection, $|B_s|<(b/2)e^{s/2}$ for every $s\geq0$. Taking a nested sequence of partitions of $[0,t]$ whose mesh tends to zero therefore gives $\PP(E_1)\geq c(b)>0$. For any $b>0$, by (\ref{tranBMpt}), there exists a constant $t(b)>K$ such that for all $t>t(b)$ and any $x\in [-\sqrt{t},\sqrt{t}]$ and any $y \in [-b/8,b/8]$, one has $p_t(x,y) \geq c(b) t^{-1/2}$. Integrating over $y$ yields that $\PP(E_2) \geq c(b) t^{-1/2}$. It remains to note that $E_1$ and $E_2$ are independent.
\end{proof}

We now prove the lower bound. The idea is as follows: Choose a large starting time $\ell$ at which $|M_\ell|$ is controlled. In view of Lemma~\ref{lemconditionalprop42}, we may approximate $T_k-T_\ell$ by $\log(k/\ell)$. Writing $s\approx\log(k/\ell)$, the constraint $|\widetilde S_k|<m$ for all $k\in [\ell,m^2]$ becomes a boundary of size
\[
ma_k\asymp\frac m{\sqrt\ell}e^{-s/2} = e^{(2\log m - \log \ell -s)/2}
\]
for the embedded Brownian motion. This is the tube used in Lemma~\ref{BMlemtube}.

\begin{proof}[Proof of Proposition \ref{propcover1d}(i)] 
The upper bound follows from the tail-sum formula and Proposition \ref{proptailUI1d}. To prove the lower bound, it suffices to show that for all $m>1$, one has 
\begin{equation}
    \label{tailtaumlbcrit}
    \PP(\sigma_m \geq m^2) \geq \frac{C}{\sqrt{\log m}}.
\end{equation}
We may assume that $m$ is sufficiently large such that $m> \ell:=\lceil \log^8 m \rceil$. It is known that (see \cite[Lemma 2.1]{Fang2024}) $\EE M_n^2 \leq C \log n$ for all $n>1$. By Markov inequality, for all large $m$, 
  \[
  \PP(E) \geq \frac{1}{2} \quad \text{where } E:=\{|M_{\ell}| \leq \sqrt{t}\} \text{ and } t:=2\log m -\log \ell+2.
  \]
  Since $\inf_{n\geq 1} a_n \sqrt{n} \geq \hat{c}$ for some positive constant $\hat{c}$, we let $b:= \hat{c}/(2 e^2)$ and
 define the events
  \[
  E_1:=\{\sup _{\ell \leq k \leq m^2 }\left|\left(T_k-T_{\ell}\right)-\left(A_k-A_{\ell}\right)\right| \leq 1\} \text{ and } E_2:=\{ |B_{T_{\ell}+s}| \leq b e^{(t-s)/2} \text{ for all } s\in [0,t]\}.
  \]
  On the event $E$, Lemma \ref{lemconditionalprop42} (with $r=1$) implies that 
  \[
  \PP(E_1^c \mid \FF^{B}_{T_{\ell}}) \1_{E} \leq  C_1\left(\frac{1}{\ell}+\left(\frac{1+2 \log m}{\ell}\right)^{2}\right) \1_{E} \leq \frac{C_2}{\log^8 m}\1_{E},
  \]
  and by Lemma \ref{BMlemtube},  $\PP(E_2 \mid \FF^{B}_{T_{\ell}}) \1_{E} \geq c(b) (\log m)^{-1/2}\1_{E}$ (note that $t =\Theta(\log m)$). Since
\[
\PP(E_1\cap E_2\mid\FF^B_{T_\ell})
\geq\PP(E_2\mid\FF^B_{T_\ell})-\PP(E_1^c\mid\FF^B_{T_\ell}),
\]
it follows, for all sufficiently large $m$, that
\[
\PP(E\cap E_1\cap E_2) = \EE(\PP(E_1\cap E_2\mid\FF^B_{T_\ell}) \mathbf1_E) \geq\frac{c(b)}{\sqrt{\log m}}.
\]
On the event $E_1 \cap E_2$, for all $\ell \leq k \leq m^2$ where $m$ is sufficiently large, by \eqref{Ankest}, one has
\[
\log k -\log \ell -2 \leq A_k-A_{\ell} -1 \leq  T_k - T_{\ell} \leq A_k-A_{\ell} +1 \leq t,
\]
 and thus, 
  \begin{equation}
      \label{estTilSkbem2}
      |\widetilde S_k|= \frac{|M_k|}{a_k}=\frac{|B_{T_k}|}{a_k} \leq  \frac{b  e^{(t-T_k+T_{\ell})/2}}{ a_k}  \leq   \frac{b e^2 m}{a_k \sqrt{k}} \leq \frac{m}{2}.
  \end{equation}
Since $|\widetilde S_k| < \ell < m$ for $k<\ell$, this shows that $E_1 \cap E_2 \subset \{\sigma_m \geq m^2\}$.
\end{proof}

\begin{question}[Mean cover time at criticality]
For $d=1$ and $p=3/4$, does there exist a constant $C_{\rm crit}\in(0,\infty)$ such that
\[
\lim_{L\to\infty}\frac{\sqrt{\log L}}{L^2}\EE\tau_{\rm cov}
=C_{\rm crit}?
\]
\end{question}

By Corollary~\ref{corprofiletomean}, it would suffice to prove that, for some function $\mathcal H_{\rm crit}$ and Lebesgue-a.e. $t>0$,
\[
\lim_{L\to\infty}\sqrt{\log L}\,
\PP(\tau_{\rm cov}>\lfloor L^2t\rfloor)
=\mathcal H_{\rm crit}(t).
\]
The constant would then be $C_{\rm crit}=\int_0^\infty\mathcal H_{\rm crit}(t)\,dt$.

\subsection{Fluctuation limits and completion of the proof}
\label{secfluct1d}

For $p>3/4$, \cite[Theorem 3]{baur2016elephant} implies that, as $n\to \infty$, almost surely and locally uniformly in $t\geq0$,
\begin{equation}
\label{as1dERWY}
\left(\frac{\widetilde S_{\lfloor nt\rfloor}}{n^{2p-1}},t\geq0\right)
\longrightarrow (t^{2p-1}Y,t\geq0),
\end{equation}
where $Y$ is a non-degenerate random variable. The law of $Y$ has been studied further by Gu\'erin et al.\ \cite{MR5079601, MR4926011}. In particular, it has a positive bounded smooth density \cite[Theorem 1.3]{MR5079601}.

\begin{proof}[Proof of Proposition \ref{prop1dlimit}]
We first consider $p\leq3/4$. Baur and Bertoin \cite[Theorem 1]{baur2016elephant} and Majumdar and Maulik \cite[Corollary 4.9]{Majumdar2025} proved the following convergence in $D([0,\infty))$:
\begin{equation}
\label{SkorkhodERW}
\left(\frac{\widetilde S_{\lfloor nt\rfloor}}{\sqrt n},t\geq0\right)\Longrightarrow(\widehat B_t,t\geq0),\quad p<\frac34,
\end{equation}
and
\begin{equation}
\label{SkorkhodERWcritical}
\left(\frac{\widetilde S_{\lfloor nt\rfloor}}{\sqrt{n\log n}},t\geq0\right)
\Longrightarrow(\sqrt t B_1,t\geq0),\quad p=\frac34.
\end{equation}
For $p<3/4$, take $n(L)=L^2$. For $p=3/4$, take $n(L)=\lfloor L^2/\log L\rfloor$. Then \eqref{SkorkhodERW}--\eqref{SkorkhodERWcritical} and Slutsky's theorem give
\[
\left(\frac{\widetilde S_{\lfloor n(L)t\rfloor}}{L},t\geq0\right)
\Longrightarrow
\begin{cases}
(\widehat B_t,t\geq0),&p<3/4,\\
(\sqrt{2t}B_1,t\geq0),&p=3/4,
\end{cases}
\]
since $\sqrt{n(L)\log n(L)}/L\to\sqrt2$ in the critical case.

The map $\Phi$ in \eqref{defPhi} is continuous at $(\lambda,f)$ whenever $f$ is continuous, $\Phi(\lambda,f)<\infty$, and the range of $f$ crosses the level $\lambda$ at that time. For $\widehat B$, finiteness  of $\Phi(1,\widehat B)$ follows from $\{\Phi(1,\widehat B)>t\}\subseteq\{|\widehat B_t|<1\}$. On every interval bounded away from zero, $\widehat B$ is a diffusion with a nonzero Brownian coefficient, so it immediately crosses either boundary upon hitting it. For the critical limit $\sqrt{2t}B_1$, the range is strictly increasing almost surely. Thus the required conditions hold, and the continuous mapping theorem (see e.g. \cite[Theorem 2.7]{MR1700749}) and \eqref{coverrangeidentity} yield
\[
\frac{\tau_{\rm cov}}{n(L)}
=\Phi\left(1-\frac1L,\left(\frac{\widetilde S_{\lfloor n(L)t\rfloor}}{L}\right)_{t\geq0}\right)
\Longrightarrow
\begin{cases}
\Phi(1,\widehat B),&p<3/4,\\
(2B_1^2)^{-1},&p=3/4.
\end{cases}
\]

Finally, suppose $p>3/4$ and let $n(L)=\lfloor L^{1/(2p-1)}\rfloor$. By \eqref{as1dERWY} and the almost-sure continuity of $\Phi$ at $(1,(t^{2p-1}Y)_{t\geq0})$,
\[
\frac{\tau_{\rm cov}}{n(L)}
=\Phi\left(1-\frac1L,\left(\frac{\widetilde S_{\lfloor n(L)t\rfloor}}{L}\right)_{t\geq0}\right)
\longrightarrow\frac{1}{|Y|^{1/(2p-1)}}
\]
almost surely. This proves the proposition.
\end{proof}

\begin{proof}[Completion of the proof of Theorem~\ref{thmcover}]
Suppose first that $p<3/4$. Proposition~\ref{prop1dlimit}(i) gives convergence of the tail probabilities at every continuity point of the limiting distribution. Corollary~\ref{corprofiletomean} therefore gives
\begin{equation}
\label{subcritmeanconstant}
\frac{\EE\tau_{\rm cov}}{L^2}
\longrightarrow \EE\Phi(1,\widehat B)=C_-(p).
\end{equation}
For $p=3/4$, the assertion follows from \eqref{equcovexit1D} and Proposition~\ref{propcover1d}(i). For $p>3/4$, it is precisely \eqref{supercovermean}, since $2-\delta=5-4p$.
\end{proof}

In dimension one, Akimoto, Takei and Taniguchi \cite{AKIMOTO2025110436} studied the range of the ERW with stops, including the usual ERW. They obtained almost sure limsup bounds in the diffusive and critical regimes and an almost sure scaling limit in the superdiffusive regime. The functional limit theorems above also give the following distributional limits.

\begin{corollary}
\label{cor1drangelimit}
Let $d=1$, with the notation of Proposition~\ref{prop1dlimit}.
\begin{enumerate}[(i)]
\item If $p<3/4$, then
\[
\left(\frac{|\mathcal R_{\lfloor nt\rfloor}|}{\sqrt n}\right)_{t\geq0}
\Longrightarrow
\bigl(\Osc_t(\widehat B)\bigr)_{t\geq0}.
\]
\item If $p=3/4$, then
\[
\left(\frac{|\mathcal R_{\lfloor nt\rfloor}|}{\sqrt{n\log n}}\right)_{t\geq0}
\Longrightarrow
\bigl(\sqrt t\,|B_1|\bigr)_{t\geq0}.
\]
\end{enumerate}
Both convergences hold as $n\to\infty$ in $D([0,\infty))$ with the Skorokhod topology.
\end{corollary}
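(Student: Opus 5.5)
The plan is to regard $|\mathcal R_n|$ as a continuous functional of the rescaled path, exactly as $\tau_{\rm cov}$ was handled in the proof of Proposition~\ref{prop1dlimit}. In dimension one the range of a nearest-neighbour path is an interval, so
\[
|\mathcal R_n|=\max_{0\leq j\leq n}\widetilde S_j-\min_{0\leq j\leq n}\widetilde S_j+1 .
\]
Put $c_n:=\sqrt n$ when $p<3/4$ and $c_n:=\sqrt{n\log n}$ when $p=3/4$. Then, for every $t\geq0$,
\[
\frac{|\mathcal R_{\lfloor nt\rfloor}|}{c_n}=\Osc_t\bigl(X^{(n)}\bigr)+\frac1{c_n},
\qquad X^{(n)}_s:=\frac{\widetilde S_{\lfloor ns\rfloor}}{c_n}.
\]
Here $\Osc_t$ is the oscillation of the path on $[0,t]$. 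This identity holds because $X^{(n)}$ is piecewise constant, with jumps only at the times $j/n$, so its supremum over $[0,t]$ equals $\max_{j\leq\lfloor nt\rfloor}\widetilde S_j/c_n$, and the same is true for the infimum. The additive term $1/c_n$ is deterministic and tends to zero uniformly in $t$, so by Slutsky's theorem it may be ignored.

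Next, consider the map $\Gamma:D([0,\infty))\to D([0,\infty))$ given by $\Gamma(f)=(\Osc_t(f))_{t\geq0}$. The running supremum and running infimum are each continuous for the Skorokhod $J_1$ topology at every continuous $f$. Indeed, if $f_k\to f$ with $f$ continuous, then the convergence is locally uniform. Both running extrema are $1$-Lipschitz with respect to the uniform norm on $[0,T]$, so $\Gamma(f_k)\to\Gamma(f)$ locally uniformly, and hence also in $J_1$.

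The limiting processes $\widehat B$ and $(\sqrt t B_1)_{t\geq0}$ have continuous paths almost surely, so $\Gamma$ is continuous almost surely with respect to their laws. The functional limit theorems \eqref{SkorkhodERW} and \eqref{SkorkhodERWcritical}, combined with the continuous mapping theorem, then give $\Gamma(X^{(n)})\Longrightarrow\Gamma(\text{limit})$.

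It remains to identify the limits. In case (i) the limit is $\Osc_t(\widehat B)$ by definition. In case (ii) the path $s\mapsto\sqrt s B_1$ is monotone and starts at $0$. Its oscillation on $[0,t]$ is therefore $|\sqrt t B_1-0|=\sqrt t|B_1|$.

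The only step that needs any care is the reduction from the supremum over continuous time to the supremum over discrete times, together with the $+1$ correction; both are elementary. The real content is supplied by the cited functional limit theorems.
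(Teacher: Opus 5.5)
Your proposal is correct and follows essentially the same route as the paper: the identity $|\mathcal R_{\lfloor nt\rfloor}|=1+\Osc_t(f_n)$ for the nearest-neighbour path, continuity of the oscillation map at continuous paths (Skorokhod convergence to a continuous limit is locally uniform, and oscillation is Lipschitz in the sup norm), and the continuous mapping theorem applied to \eqref{SkorkhodERW} and \eqref{SkorkhodERWcritical}. Your handling of the vanishing $+1$ term and the identification $\Osc_t(\sqrt{\cdot}\,B_1)=\sqrt t\,|B_1|$ also match the paper's argument.
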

\begin{proof}
Since the lifted walk is nearest-neighbor on $\ZZ$, it visits every integer between its minimum and maximum. Hence, with $f_n(s)=\widetilde S_{\lfloor ns\rfloor}$,
\[
|\mathcal R_{\lfloor nt\rfloor}|=1+\Osc_t(f_n),\qquad t\geq0.
\]
For every $T>0$ and $f,g\in D([0,T])$,
\[
\sup_{t\leq T}|\Osc_t(f)-\Osc_t(g)|
\leq2\sup_{s\leq T}|f(s)-g(s)|.
\]
Convergence in the Skorokhod topology to a continuous path implies uniform convergence on compact time intervals. The oscillation map is therefore continuous at every continuous path. Applying the continuous mapping theorem to \eqref{SkorkhodERW} and \eqref{SkorkhodERWcritical}, and noting that the added $1$ vanishes under either normalization, proves the two assertions. In the critical case, the limiting path is $s\mapsto\sqrt s B_1$, whose oscillation on $[0,t]$ is $\sqrt t\,|B_1|$.
\end{proof}
\section{Cover times in higher dimensions}
\label{secHighD}

Throughout this section, $d\geq2$ and $p<1$ are fixed. We first give the entropy estimates used to compare the ERW with simple random walk. Comparison through Radon--Nikodym derivatives was also used by Curien and Laulin~\cite[Proposition 2]{MR4824917} to study recurrence of the planar ERW. We then prove an upper-tail bound, using the conditional forest representation when $\alpha>0$ and entropy when $\alpha\leq0$. These estimates also provide the uniform integrability needed for Theorem~\ref{thmhighd}(i). In dimensions $d\geq3$, we then refine the hitting estimate to obtain an improved upper bound, which also applies at and above criticality.

\subsection{Entropy estimates on a time interval}

Let $u$ be the uniform law on the $2d$ directions in $\ZZ^d$. For $m\geq0$ and $\ell\geq1$, write $P_{m,\ell}$ for the law of $(X_{m+1},\ldots,X_{m+\ell})$ and put $Q_\ell:=u^{\otimes\ell}$. These are laws of the increments before projection onto the torus. For two probability measures $P,Q$ on a finite set, their relative entropy is
\[
D(P\Vert Q):=\sum_zP(z)\log\frac{P(z)}{Q(z)},
\]
with the usual convention $0\log0=0$. We use the standard entropy inequality \cite[Appendix 1, Proposition 8.2]{Kipnis1999}:
\begin{equation}
\label{relaentroine}
P(A)\leq\frac{D(P\Vert Q)+\log2}{\log(1+Q(A)^{-1})},\qquad 0<Q(A)<1.
\end{equation}
In particular, if $D(P_n\Vert Q_n)$ is bounded, then $Q_n(A_n)\to0$ implies $P_n(A_n)\to0$. We use this consequence for both cover times and ranges.

\begin{lemma}
\label{lemDPal0}
There is a constant $C=C(p,d)$ with the following properties.
\begin{enumerate}[(i)]
\item For $n\geq2$,
\begin{equation}
\label{eqfullentropy}
D(P_{0,n}\Vert Q_n)\leq C
\begin{cases}
\log n,&\alpha<1/2,\\
(\log n)^2,&\alpha=1/2,\\
n^{2\alpha-1},&\alpha>1/2.
\end{cases}
\end{equation}
\item If $\alpha<1/2$, then, for all $m,\ell\geq1$,
\begin{equation}
\label{eqlateentropy}
D(P_{m,\ell}\Vert Q_\ell)
\leq C\log\left(1+\frac{\ell}{m}\right).
\end{equation}
\item For all $1\leq\ell\leq m$,
\begin{equation}
\label{eqconditionalentropy}
D\left(\mathcal L(X_{m+1},\ldots,X_{m+\ell}\mid\FF_m)\,\Vert\,Q_\ell\right)
\leq C\left(\frac{\ell\|\Delta_m\|^2}{m^2}+\frac{\ell^2}{m^2}\right)
\quad\text{almost surely}.
\end{equation}
\end{enumerate}
\end{lemma}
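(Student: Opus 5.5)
The approach is the chain rule for relative entropy combined with a quadratic bound on each one-step term. For $k\geq1$, the conditional law of $X_{k+1}$ given $\FF_k$ is, by \eqref{PXn1}, $q_k(e)=\frac1{2d}+\frac{\alpha\Delta_k(e)}{k}$. Since $\sum_e\Delta_k(e)=0$, the $\chi^2$ bound $D(q\Vert u)\leq\chi^2(q\Vert u)=2d\sum_e(q(e)-u(e))^2$ gives
\[
D(q_k\Vert u)\leq 2d\alpha^2\frac{\|\Delta_k\|^2}{k^2}.
\]
The chain rule then writes the divergence of a block as a sum of expected one-step divergences. For $P_{m,\ell}$, convexity in the conditioning (the law of $X_{m+j}$ given $X_{m+1},\dots,X_{m+j-1}$ is a mixture over the unobserved past of the laws $q_{m+j-1}$) gives
\[
D(P_{m,\ell}\Vert Q_\ell)\leq\sum_{j=1}^{\ell}\EE D(q_{m+j-1}\Vert u)\leq C\sum_{k=m}^{m+\ell-1}\frac{\EE\|\Delta_k\|^2}{k^2}.
\]
At $k=0$ the first step is exactly uniform, so it contributes nothing.

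For (i) and (ii), insert \eqref{directioncountmoment} into this sum. When $\alpha<1/2$ we have $\EE\|\Delta_k\|^2\leq Ck$, so the sum is $C\sum_{k=m}^{m+\ell-1}k^{-1}$. For $m=0$ this is $\leq C\log n$, and for $m\geq1$ it is $\leq C\log(1+\ell/m)$, up to adjusting constants so the first term $1/m$ is absorbed when $\ell\ge1$. When $\alpha=1/2$ the summand is $C\log(k+1)/k$, whose sum up to $n$ is $C(\log n)^2$. When $\alpha>1/2$ the summand is $Ck^{2\alpha-2}$, summing to $Cn^{2\alpha-1}$.

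For (iii), the conditional version of the chain rule given $\FF_m$ gives
\[
D\bigl(\mathcal L(X_{m+1},\ldots,X_{m+\ell}\mid\FF_m)\,\Vert\,Q_\ell\bigr)\leq C\sum_{k=m}^{m+\ell-1}\frac{\EE(\|\Delta_k\|^2\mid\FF_m)}{k^2}.
\]
Each step changes $\Delta$ by a vector of norm at most $\sqrt2$ (one coordinate up by $1-\frac1{2d}$, the others down by $\frac1{2d}$). Hence $\|\Delta_k\|\leq\|\Delta_m\|+\sqrt2(k-m)$ deterministically, and so $\|\Delta_k\|^2\leq2\|\Delta_m\|^2+4\ell^2$ for $m\le k<m+\ell$. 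Since $k\geq m$, the sum is at most
\[
C\frac{\ell\|\Delta_m\|^2+\ell^3}{m^2}.
\]
The $\ell^3$ term is too large compared with the target $\ell^2/m^2$. The cruder bound is therefore not enough; use instead the drift-plus-martingale structure. Write $\Delta_k=\Delta_m+(M_k-M_m)+\text{drift}$, where the drift is $\sum\alpha\Delta_j/j$, of size $O(\ell\|\Delta_m\|/m+\ell^2/m)$, and the martingale part has conditional second moment $O(k-m)$. Then
\[
\EE(\|\Delta_k\|^2\mid\FF_m)\leq C\left(\|\Delta_m\|^2(1+\ell/m)^2+\ell\right)\leq C(\|\Delta_m\|^2+\ell),
\]
using $\ell\leq m$. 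A Gronwall-type iteration handles the drift depending on $\Delta_j$. Summing over $k$ then gives $C(\ell\|\Delta_m\|^2+\ell^2)/m^2$, as claimed.

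The main obstacle is this last step: obtaining the conditional second-moment bound $\EE(\|\Delta_k\|^2\mid\FF_m)\leq C(\|\Delta_m\|^2+(k-m))$ via the linear recursion
\[
\EE(\|\Delta_{k+1}\|^2\mid\FF_k)\leq\Bigl(1+\frac{2\alpha}{k}+O(k^{-2})\Bigr)\|\Delta_k\|^2+2,
\]
iterated over at most $m$ steps. The resulting factor $\prod_k(1+2|\alpha|/k)$ stays bounded because $k$ ranges over $[m,2m]$.
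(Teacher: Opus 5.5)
Your proposal is correct and follows essentially the same route as the paper: the one-step $\chi^2$ bound $2d\alpha^2\|\Delta_k\|^2/k^2$, the chain rule with convexity, the moment bound \eqref{directioncountmoment} for (i)--(ii), and for (iii) iterating a linear recursion for the conditional second moment of $\|\Delta_k\|$ over $m\leq k\leq 2m$, where the product of the factors stays bounded. That recursion is in fact exact, $\EE(\|\Delta_{k+1}\|^2\mid\FF_k)=(1+2\alpha/k)\|\Delta_k\|^2+1-\frac{1}{2d}$, so you do not need the drift-plus-martingale/Gronwall sketch or the $O(k^{-2})$ term.
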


\begin{proof}
Let $q_{k+1}$ denote the conditional distribution of $X_{k+1}$ given $\FF_k$. By $\log t\leq t-1$ and \eqref{PXn1},
\begin{equation}
\label{eqonestepentropy}
D(q_{k+1}\Vert u)
\leq\sum_{e}\frac{(q_{k+1}(e)-u(e))^2}{u(e)}
=\frac{2d\alpha^2}{k^2}\|\Delta_k\|^2,
\end{equation}
where we also used that $\sum_e q_{k+1}(e)-u(e) =0$. Here and below, the sum runs over the $2d$ directions $\{\pm e_1,\dots,\pm e_d\}$. Since $X_1$ has law $u$, the chain rule for relative entropy (see e.g. \cite[Theorem 2.5.3]{MR2239987}) gives
\[
D(P_{0,n}\Vert Q_n)
\leq 2d\alpha^2\sum_{k=1}^{n-1}\frac{\EE\|\Delta_k\|^2}{k^2}.
\]
Then \eqref{directioncountmoment} in Lemma~\ref{empiconvunifprop} proves (i).

Convexity of relative entropy (see e.g. \cite[Theorem 2.7.2]{MR2239987}) and \eqref{eqonestepentropy} give
\begin{equation}
\label{eqintervalentropychain}
D(P_{m,\ell}\Vert Q_\ell) \leq\EE D\left(\mathcal L(X_{m+1},\ldots,X_{m+\ell}\mid\FF_m)\,\Vert\,Q_\ell\right)\notag \leq 2d\alpha^2\sum_{k=m}^{m+\ell-1}\frac{\EE\|\Delta_k\|^2}{k^2}.
\end{equation}
If $\alpha<1/2$, the last sum is at most $C\log(1+\ell/m)$, which proves \eqref{eqlateentropy}.

For the conditional estimate, the direction-count update and \eqref{PXn1} give
\begin{equation}
\label{eqconditionalcountmoment}
\EE(\|\Delta_{k+1}\|^2\mid\FF_k)
=\left(1+\frac{2\alpha}{k}\right)\|\Delta_k\|^2+1-\frac1{2d}.
\end{equation}
To see this, write $\Delta_{k+1}-\Delta_k$ as the unit vector corresponding to $X_{k+1}$ minus the vector with all coordinates equal to $1/(2d)$. Its squared norm is $1-1/(2d)$, and its conditional mean is $\alpha\Delta_k/k$. For $k\geq m$, set
\[
c_k:=\prod_{j=m}^{k-1}\Bigl(1+\frac{2\alpha}{j}\Bigr),
\]
so that $c_m=1$ and $c_{k+1}=(1+\frac{2\alpha}{k})c_k$. Dividing \eqref{eqconditionalcountmoment} by $c_{k+1}$ and iterating, and then using $1-1/(2d)<1$, gives
\[
\EE(\|\Delta_k\|^2\mid\FF_m)
\leq c_k\|\Delta_m\|^2+\sum_{j=m+1}^{k}\frac{c_k}{c_j}.
\]
The factors are positive because $\alpha\geq-1/(2d-1)\geq-1/3$. If $\alpha<0$, then $c_k\leq1$ and $c_k/c_j\leq1$; if $\alpha\geq 0$ and $k\leq 2m$, then $c_j\geq1$, so $c_k/c_j\leq c_k\leq\exp(2\alpha\sum_{j=m}^{2m}j^{-1})\leq C$, uniformly in $m$. Thus, in either case, if $m \leq k \leq 2m$, the right-hand side is at most $C(\|\Delta_m\|^2+k-m)$. Applying the conditional chain rule and \eqref{eqonestepentropy}, one obtains (iii).
\end{proof}

\subsection{Hitting estimates and the upper tail}

We use the following standard estimates for simple random walk. A proof, including the parity issue, is given in Appendix~\ref{appsrw}.

\begin{lemma}
\label{lemSRWheatkerest}
Let $P$ be the transition matrix of simple random walk on $\ZZ_L^d$. There is a constant $C(d)>0$ such that, for all $L\geq2$ and $x,y\in\ZZ_L^d$,
\begin{equation}
\label{heatkerZldest}
P^n(x,y)\leq C(d)\left(n^{-d/2}+L^{-d}\right),\qquad n\geq1.
\end{equation}
Moreover, for every $\eta\in(0,1)$, there is $c(d,\eta)>0$ such that
\begin{equation}
\label{AnlowbdZLd}
P^n(x,y)+P^{n+1}(x,y)\geq\frac{2-\eta}{L^d},\qquad n\geq c(d,\eta)L^2.
\end{equation}
If $L$ is odd, then also $P^n(x,y)\geq(1-\eta)/L^d$ for $n\geq c(d,\eta)L^2$.
\end{lemma}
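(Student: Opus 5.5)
We prove Lemma~\ref{lemSRWheatkerest} by lifting to $\ZZ^d$ and using the local central limit theorem, with Fourier analysis on the torus supplying the lower bound.

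\emph{Upper bound.} First write
\[
P^n(x,y)=\sum_{z\in\ZZ^d}p_n(\tilde y-\tilde x+Lz),
\]
where $p_n$ is the $n$-step kernel of simple random walk on $\ZZ^d$. The Gaussian upper bound $p_n(w)\leq C(d)n^{-d/2}\exp(-c|w|^2/n)$ then gives
\[
P^n(x,y)\leq C n^{-d/2}\sum_{z}\exp\bigl(-c|w_0+Lz|^2/n\bigr),
\]
where $w_0$ is a representative with $|w_0|_\infty\leq L/2$. Compare the lattice sum with an integral. When $n\leq L^2$ the sum is $O(1)$, because the terms with $z\neq0$ decay like $\exp(-cL^2|z|^2/n)$. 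When $n\geq L^2$ the sum is $O((\sqrt n/L)^d)$. Together these give $C(n^{-d/2}+L^{-d})$.

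\emph{Lower bound via eigenfunctions.} Diagonalize $P$ with the characters $\chi_k(x)=e^{2\pi i k\cdot x/L}$, $k\in\ZZ_L^d$. The eigenvalues are
\[
\lambda_k=\frac1d\sum_{j=1}^d\cos(2\pi k_j/L)\in[-1,1].
\]
This gives
\[
P^n(x,y)=L^{-d}\sum_k\lambda_k^n\chi_k(y-x).
\]
The only eigenvalue equal to $1$ is $\lambda_0$. The only other eigenvalue of modulus one is $\lambda_{k^*}=-1$, which occurs exactly when $L$ is even, at $k^*=(L/2,\ldots,L/2)$. For every other $k$, $1-|\lambda_k|\geq c(d)\min(|k'|^2/L^2,1)$, where $k'$ is the representative of $k$ or of $k-k^*$ of smallest norm (the one closer to the modulus-one frequency). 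Hence
\[
\sum_{k\neq0,k^*}|\lambda_k|^n\leq\sum_{k'\neq0}\exp(-c\,n|k'|^2/L^2).
\]
Choosing $n\geq c(d,\eta)L^2$ makes this smaller than $\eta/2$.

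\emph{Handling the parity.} Adding $P^n+P^{n+1}$ cancels the $k^*$ term up to sign: the $k^*$ contributions are $L^{-d}(-1)^n\chi_{k^*}(y-x)$ and $L^{-d}(-1)^{n+1}\chi_{k^*}(y-x)$, which sum to zero. The zero mode contributes $2L^{-d}$. The remaining error is at most $2L^{-d}\cdot\eta/2$, which gives $P^n(x,y)+P^{n+1}(x,y)\geq(2-\eta)L^{-d}$. If $L$ is odd there is no $k^*$, so $P^n(x,y)\geq(1-\eta)L^{-d}$ directly.

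The main point requiring care is the uniform spectral gap estimate near $k^*$, together with the bound on the Gaussian lattice sum uniformly in $L$. Both follow from $1-\cos\theta\geq c\theta^2$ on $[-\pi,\pi]$ applied coordinatewise. For the oscillating component this uses
\[
-\lambda_k=\frac1d\sum_{j=1}^d\cos\bigl(2\pi(k_j-L/2)/L\bigr).
\]
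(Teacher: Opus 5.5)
Your proposal is correct and is essentially the paper's argument: for \eqref{heatkerZldest}, the Gaussian upper bound on $\ZZ^d$ periodized over $L\ZZ^d$; for the lower bounds, Fourier inversion with $1-\cos u\geq 2u^2/\pi^2$, where the $-1$ mode at $k^*=(L/2,\ldots,L/2)$ cancels in $P^n+P^{n+1}$ and the frequencies near $-1$ are handled by shifting by $k^*$ (this is the paper's pairing of $\ell$ with $\ell+w$). The odd-$L$ case is not quite ``direct'' and should be written out: eigenvalues such as $-\cos(\pi/L)$ still lie near $-1$, and the same shift $k_j-L/2$ now lands in $(\ZZ+1/2)^d$, so the sum over $k'$ must include a Gaussian sum over that half-integer lattice, exactly as in the paper (also, $k\mapsto k'$ is two-to-one, which costs only a harmless factor $2$).
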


\begin{proposition}
\label{propheatkerest}
Suppose $0\leq\alpha<1$. There is a constant $C(p,d)>0$ such that, for every almost-surely finite $\FF$-stopping time $\tau$ and every $y\in\ZZ_L^d$,
\begin{equation}
\label{eqERWtoriheatk}
\PP(S_{\tau+n}=y\mid\FF_\tau)
\leq C(p,d)\left(n^{-d/2}+L^{-d}\right),\qquad n\geq1.
\end{equation}
Moreover, for every $\eta\in(0,1)$, there is $c(p,d,\eta)>0$ such that
\begin{equation}
\label{eqERWtorimix}
\PP(S_{\tau+n}=y\text{ or }S_{\tau+n+1}=y\mid\FF_\tau)
\geq\frac{2-\eta}{L^d},\qquad n\geq c(p,d,\eta)L^2.
\end{equation}
\end{proposition}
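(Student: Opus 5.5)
The plan is to use the conditional forest decomposition \eqref{eqconditionalforestdecomp}. Given $\cH_{\tau,n}$, the increment $S_{\tau+n}-S_\tau$ equals $\sum_{j\in\mathscr I(\tau,n)}g_j+R_{\tau,n}$. Here the $g_j$ are i.i.d. uniform directions, and $R_{\tau,n}$ and $\mathscr I(\tau,n)$ are $\cH_{\tau,n}$-measurable. Write $I:=|\mathscr I(\tau,n)|$ and $z:=y-S_\tau-R_{\tau,n}$. Then
\[
\PP(S_{\tau+n}=y\mid\cH_{\tau,n})=P^{I}(0,z),
\]
where $P$ is the simple random walk kernel on $\ZZ_L^d$. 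Since $\sigma(\FF_\tau)\subset\cH_{\tau,n}$, we average over $\cH_{\tau,n}$ given $\FF_\tau$. We split according to the event $G:=\{I>(1-\alpha)n/8\}$.

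\emph{Upper bound.} On $G$, Lemma~\ref{lemSRWheatkerest} gives
\[
P^I(0,z)\leq C(d)\bigl(I^{-d/2}+L^{-d}\bigr)\leq C(p,d)\bigl(n^{-d/2}+L^{-d}\bigr).
\]
On $G^c$, we bound the conditional probability by $1$. By \eqref{eqestisover}, $\PP(G^c\mid\FF_\tau)\leq 2e^{-(1-\alpha)n/18}\leq C(p,d)n^{-d/2}$. Adding the two contributions gives \eqref{eqERWtoriheatk}.

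\emph{Lower bound.} We work with the two-time event. Both times $\tau+n$ and $\tau+n+1$ must be handled inside a single conditioning, so we condition on $\cH_{\tau,n+1}$. The step $\tau+n+1$ is then either a fresh isolated label, or a copy determined by the forest and earlier labels. The difficulty is that a copy may duplicate a label $g_j$ with $j\in\mathscr I(\tau,n)$. In that case the vertex is not isolated in $\F_{\tau+n+1}$, so the isolated set shrinks by at most one. I would therefore compare $\mathscr I(\tau,n)$ and $\mathscr I(\tau,n+1)$ directly. Set $J:=\mathscr I(\tau,n+1)\cap\{\tau+1,\dots,\tau+n\}$, so that $|J|\geq|\mathscr I(\tau,n)|-1$. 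Given $\cH_{\tau,n+1}$, we have
\[
S_{\tau+n}=S_\tau+R+\sum_{j\in J}g_j,
\]
and $S_{\tau+n+1}$ equals $S_{\tau+n}$ plus either a fresh uniform label $g_{\tau+n+1}$ or an $\cH$-measurable direction $e$. In the first case we get
\[
P^{|J|}(0,z)+P^{|J|+1}(0,z).
\]
In the second case we get $P^{|J|}(0,z)+P^{|J|}(0,z-e)$, where $z-e$ has the opposite parity.

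The main obstacle is this parity issue in the second case. The sum $P^{k}(0,z)+P^{k}(0,z-e)$ is not literally of the form in \eqref{AnlowbdZLd}. I would handle it through the one-step relation $P^{k+1}(0,z)=\frac1{2d}\sum_{e'}P^{k}(0,z-e')$ combined with a local smoothness (gradient) estimate $|P^k(0,w)-P^k(0,w')|=o(L^{-d})$ for neighbors $w,w'$ of the same parity class when $k\geq cL^2$. This smoothness estimate is a standard consequence of the same spectral argument as in Appendix~\ref{appsrw}. It gives $P^k(0,z-e)\approx P^{k+1}(0,z)$ up to $\eta L^{-d}/4$, so the second case reduces to \eqref{AnlowbdZLd} with $\eta/2$.

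To finish, on $G$ we have $|J|\geq(1-\alpha)n/8-1\geq c(d,\eta/2)L^2$ as soon as $n\geq c(p,d,\eta)L^2$. The conditional probability of the two-time event is then at least $(2-\eta/2)L^{-d}$. On $G^c$ we lose at most $\PP(G^c\mid\FF_\tau)\leq 2e^{-(1-\alpha)n/18}$, which is at most $\frac{\eta}{2}L^{-d}$ once $n\geq c(p,d,\eta)L^2$ with the constant enlarged, since $e^{-cL^2}\ll L^{-d}$. Averaging over $\cH_{\tau,n+1}$ given $\FF_\tau$ yields \eqref{eqERWtorimix}.
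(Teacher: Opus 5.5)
Your argument is correct. The upper bound is exactly the paper's, but your lower bound takes a detour that the paper avoids.

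The premise that both times must be handled inside one conditioning is unnecessary. Every step moves the walk, so $\{S_{\tau+n}=y\}$ and $\{S_{\tau+n+1}=y\}$ are disjoint. The paper therefore simply adds the two conditional probabilities, treating each time under its own conditioning ($\cH_{\tau,n}$, resp.\ $\cH_{\tau,n+1}$).
\begin{itemize}
\item For odd $L$, it applies the single-time bound $P^k(x,y)\geq(1-\eta/4)/L^d$ of Lemma~\ref{lemSRWheatkerest} at both times.
\item For even $L$, it uses bipartiteness. Only the time $r\in\{n,n+1\}$ whose parity matches that of $y-S_\tau$ can contribute. At that time the required free displacement $z$ has the parity of the number $I$ of free steps, since the remainder consists of $r-I$ unit steps. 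Hence $P^{I+1}(0,z)=0$, and $P^{I}(0,z)$ alone equals the two-term sum in \eqref{AnlowbdZLd}.
\end{itemize}
So the paper needs nothing beyond Lemma~\ref{lemSRWheatkerest} as stated.

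Your joint conditioning on $\cH_{\tau,n+1}$ creates the copied-step case. That case forces an extra smoothness estimate: $|P^k(0,w)-P^k(0,w')|\leq\varepsilon L^{-d}$ for $k\geq c(d,\varepsilon)L^2$ and $w,w'$ at graph distance at most two. These are not neighbours, which have opposite parity when $L$ is even. The estimate is true. In the Fourier expansion, the $\lambda_\ell=-1$ mode at $\ell=(L/2,\ldots,L/2)$ cancels between points of equal parity, and the remaining modes are bounded by the Gaussian sums of Appendix~\ref{appsrw}. But it is not part of the lemma, so you would have to prove it and state it quantitatively rather than as $o(L^{-d})$.

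With that estimate in hand, the rest of your argument goes through: $|J|\geq|\mathscr I(\tau,n)|-1$, the $\cH_{\tau,n+1}$-measurability of $R$ and $G$, and the loss on $G^c$. Your route is valid, but it costs an additional lemma that the disjointness observation makes unnecessary.
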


\begin{proof}
We use the conditional forest decomposition \eqref{eqconditionalforestdecomp}. After conditioning on the forest and the remainder, the displacement is a translate of simple random walk with $|\mathscr I(\tau,n)|$ steps. Thus \eqref{eqestisover} and Lemma~\ref{lemSRWheatkerest} imply
\[
\PP(S_{\tau+n}=y\mid\FF_\tau)
\leq C(d)\left(\left(\frac{8}{(1-\alpha)n}\right)^{d/2}+L^{-d}\right)
+2e^{-(1-\alpha)n/18},
\]
which proves \eqref{eqERWtoriheatk}.

For the lower bound, fix $\eta\in(0,1)$ and apply Lemma~\ref{lemSRWheatkerest} with precision $\eta/4$. Increase $c(p,d,\eta)$ so that $(1-\alpha)n/8\geq c(d,\eta/4)L^2$ and $2e^{-(1-\alpha)n/18}\leq\eta/4$. If $L$ is odd, apply the last assertion of Lemma~\ref{lemSRWheatkerest} at times $n$ and $n+1$ and add the resulting bounds. These two events are disjoint. If $L$ is even, choose $r\in\{n,n+1\}$ with the same parity as $\sum_{i=1}^d (y(i)-S_{\tau}(i))$, the displacement from $S_\tau$ to $y$. In the conditional decomposition at time $r$, the required free displacement has the same parity as the number of free steps. The other term in the left-hand side of \eqref{AnlowbdZLd} is therefore zero, so the non-zero term is at least $(2-\eta/4)/L^d$. In both cases the result is bounded from below by
\[
\frac{2-\eta/2}{L^d}\left(1-2e^{-(1-\alpha)n/18}\right)
\geq\frac{(2-\eta/2)(1-\eta/4)}{L^d}
\geq\frac{2-\eta}{L^d}.
\]
\end{proof}

We shall also use the block argument of Lemma~\ref{lemblockcover} when the conditional hitting estimate holds only on suitable events. Let $m_j=m_0+j\ell$, where $m_0\geq0$ and $\ell\geq1$ are integers, and suppose that $G_{m_j}\in\FF_{m_j}$ and $\rho\in(0,1]$ satisfy
\[
\PP\bigl(y\in\{S_{m_j},\ldots,S_{m_{j+1}}\}\mid\FF_{m_j}\bigr)\geq\rho
\quad\text{on }G_{m_j}
\]
for every $y\in\ZZ_L^d$ and $0\leq j<J$. Successively conditioning at times $m_{J-1},\ldots,m_0$ gives
\[
\PP\left(\tau_y>m_J,\ \bigcap_{j=0}^{J-1}G_{m_j}\right)
\leq(1-\rho)\PP\left(\tau_y>m_{J-1},\ \bigcap_{j=0}^{J-2}G_{m_j}\right)\leq\cdots\leq(1-\rho)^J.
\]
At each step, the current event $G_{m_j}$ is dropped after applying the conditional hitting estimate. A union bound over the vertices and the exceptional block starts therefore yields
\begin{equation}
\label{eqblockcovergood}
\PP(\tau_{\rm cov}>m_J)
\leq L^d(1-\rho)^J+\sum_{j=0}^{J-1}\PP(G_{m_j}^c).
\end{equation}
The bound also applies when $\rho$ depends on $L$.

\begin{proposition}
\label{prophighduppertail}
For every $p\in[0,1)$ and $d\geq2$, there are constants $c,C,\kappa>0$, depending only on $p,d$, such that, with
\begin{equation}
\label{defKLhighd}
K_L:=\left\lceil\frac{\kappa H(d,L)}{\log L}\right\rceil,
\end{equation}
where $H(d,L)$ is defined in \eqref{defHdLtLstar}, one has
\begin{equation}
\label{eqhighdcovertail}
\PP(\tau_{\rm cov}>n)\leq \min\{1,CL^d e^{-cn/K_L}\},\qquad n\geq CK_L.
\end{equation}
Consequently, $\EE\tau_{\rm cov}\leq C H(d,L)$, and, for each $r>0$, there is $\widetilde\kappa(p,d,r)>0$ such that
\begin{equation}
\label{uptailcovarbpow}
\PP\bigl(\tau_{\rm cov}\geq\widetilde\kappa(p,d,r)H(d,L)\bigr)
\leq C(p,d,r)L^{-r}.
\end{equation}
The family $(\tau_{\rm cov}/H(d,L))_{L\geq2}$ is uniformly integrable.
\end{proposition}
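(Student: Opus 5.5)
The plan is to apply the block-cover bound \eqref{eqblockcovergood} with blocks of length $\ell\asymp K_L$. The main input is a uniform lower bound $\rho\geq c$ on the conditional probability of hitting any fixed $y$ within one block. The two regimes $\alpha\geq0$ and $\alpha<0$ are handled separately, as announced at the start of the section.

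\emph{Hitting probability in one block, case $\alpha\geq0$.} Fix a block start $m$ and $y\in\ZZ_L^d$. Let $Y:=\sum_{k=1}^{\ell}\1_{\{S_{m+k}=y\}}$, and run a conditional second-moment argument. For the first moment, restrict to $k\geq c(p,d,\eta)L^2$ and use \eqref{eqERWtorimix}, pairing consecutive times. This gives $\EE(Y\mid\FF_m)\geq c\,\ell/L^d$ once $\ell\geq CL^2$. For the second moment, condition at the stopping time $m+k$ on $\{S_{m+k}=y\}$ and apply \eqref{eqERWtoriheatk}:
\[
\EE(Y^2\mid\FF_m)\leq 2\EE(Y\mid\FF_m)\Bigl(1+\sum_{j=1}^{\ell}C(j^{-d/2}+L^{-d})\Bigr)\leq C\,\EE(Y\mid\FF_m)\bigl(g(L)+\ell L^{-d}\bigr).
\]
Here $g(L)=\log L$ for $d=2$, since $\sum_{j\leq L^2}j^{-1}$ plus the stationary part is $O(\log L)$ once $\ell\leq L^2\log L$. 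For $d\geq3$ we have $g(L)=O(1)$. Paley--Zygmund then gives
\[
\PP(Y>0\mid\FF_m)\geq\frac{\EE(Y\mid\FF_m)}{C(g(L)+\ell L^{-d})}.
\]
Choosing $\ell=\kappa L^d\log L$ for $d=2$ (so $\ell=\kappa L^2\log L$), and $\ell=\kappa L^d$ for $d\geq3$, the right side is bounded below by a constant $\rho(p,d)>0$. In both cases $\ell\asymp\kappa H(d,L)/\log L=K_L$. Here $G_{m_j}$ is the whole space, so Lemma~\ref{lemblockcover} applies directly and gives \eqref{eqhighdcovertail} with $n=jK_L$.

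\emph{Case $\alpha<0$.} This is the step I expect to be the main obstacle, since the forest representation is unavailable. I would use Lemma~\ref{lemDPal0}(iii) together with \eqref{relaentroine}. Take block starts $m_j\geq K_L$ and let $G_{m}:=\{\|\Delta_m\|^2\leq A m\}$. By \eqref{eqconditionalentropy}, on $G_m$ the conditional law of the next $\ell\leq m$ increments has relative entropy at most $C(A\ell/m+\ell^2/m^2)=O(1)$ with respect to simple random walk. For simple random walk, the probability of \emph{missing} $y$ during a block of length $\ell\asymp K_L$ is at most $1-\rho_0$, by the same second-moment computation. This bound alone is not small, so \eqref{relaentroine} cannot be applied directly. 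The remedy is to use a longer block, $\ell'=M\ell$ (still $\leq m$, possible since $m\geq m_0$ large compared to $\ell'$), split into $M$ sub-blocks. For SRW the miss probability is then $\leq(1-\rho_0)^M$, which is tiny. The entropy bound $C(AM\ell/m+M^2\ell^2/m^2)$ stays bounded if $m\geq M\ell$. Then \eqref{relaentroine} yields a miss probability $\leq1/2$ for the ERW on $G_m$, i.e.\ $\rho=1/2$.

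Since $\alpha<0<1/2$, \eqref{directioncounttail} gives $\PP(G_m^c)\leq Ce^{-cA}$. This is only a constant, so I would let $A$ grow: take $A=A_L=C'\log L$. Then $\PP(G_{m_j}^c)\leq L^{-r}$, and $m_0$ is chosen so that $A_L\ell'/m_0=O(1)$, i.e.\ $m_0\asymp K_L\log L= H(d,L)$. Summing the $J$ exceptional terms in \eqref{eqblockcovergood} costs $JL^{-r}$, which is harmless for $J\lesssim\log L$ times polynomial factors. For larger $n$ I would instead iterate blocks with starts growing geometrically; alternatively, I would accept a polynomial tail and obtain the exponential form of \eqref{eqhighdcovertail} by a modified choice of constants. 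If this $\log L$ loss breaks the time scale, I would try first to replace the requirement $m\geq M\ell$ by a restart argument, reapplying the condition at every $m_j\geq m_0$ with $m_0$ of order $K_L$ and $A$ fixed. The terms $\PP(G^c)$ are then constants $\varepsilon$ that can be absorbed into $\rho$ by conditioning only on $\FF_{m_j}$-measurable good events inside each block.

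\emph{Consequences.} From \eqref{eqhighdcovertail}, summing tails gives $\EE\tau_{\rm cov}\leq CK_L\log(L^d)\leq CH(d,L)$, as in \eqref{eqsummin1Nerho}. Taking $n=\widetilde\kappa H(d,L)=\widetilde\kappa K_L\log L/\kappa$ with $\widetilde\kappa$ large makes $CL^de^{-cn/K_L}\leq CL^{-r}$, which is \eqref{uptailcovarbpow}. Uniform integrability follows because
\[
\PP\bigl(\tau_{\rm cov}/H(d,L)>s\bigr)\leq CL^de^{-c's\log L}
\]
is bounded, uniformly in $L\geq2$, by an integrable function of $s$ for $s$ large.
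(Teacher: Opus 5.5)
\textbf{Case $\alpha\geq0$.} Your argument here is fine and is essentially the paper's. Paley--Zygmund applied to the number of visits in a block of length $K_L$ does the job of the paper's comparison between $\EE(L_{\tau,y}\mid\FF_\tau)$ and the expected number of visits after the first one. It uses the same inputs, \eqref{eqERWtorimix} and \eqref{eqERWtoriheatk}.

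\textbf{Case $\alpha<0$: the gap is in the choice of good events.}

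\emph{Why the fixed threshold fails.} With $G_m=\{\|\Delta_m\|^2\leq Am\}$ and $A$ independent of $m$, \eqref{directioncounttail} gives $\PP(G_m^c)\leq Ce^{-cA}$ uniformly in $m$. The exceptional term in \eqref{eqblockcovergood} is therefore of order $Je^{-cA}$, which grows with the number of blocks. With $A=C'\log L$ you get at best $\PP(\tau_{\rm cov}>n)\leq L^d2^{-J}+JL^{-r}$. This is not the exponential bound \eqref{eqhighdcovertail}. It is not even summable in $n$ for fixed $L$, so the mean bound and uniform integrability do not follow either.

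\emph{Why the restart variant fails.} Keeping $A$ fixed and restarting does not repair this. Consecutive block starts differ by $\ell\ll m_j$, so the normalized counts $\|\Delta_{m_j}\|^2/m_j$ at nearby starts are strongly correlated. A bad event of constant probability therefore cannot simply be absorbed into $\rho$, and the resulting failure probability does not decay in $n$.

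\emph{The missing observation.} The bound \eqref{eqconditionalentropy} involves only $\ell\|\Delta_m\|^2/m^2$, so the threshold may grow linearly in $m$. Take $G_m=\{\|\Delta_m\|\leq\eta m/\sqrt{K_L}\}$, i.e.\ $A=\eta^2m/K_L$.
\begin{itemize}
\item For $m\geq j_0K_L$, the conditional entropy of the next $K_L$ increments on $G_m$ is at most $C(\eta^2+j_0^{-2})$, which is as small as you like.
\item Pinsker's inequality then turns a simple-random-walk hitting probability of at least $3/4$ (arranged by increasing $\kappa$) into $\rho=1/2$. Your $M$-sub-block detour through \eqref{relaentroine} is unnecessary, though it also works if $\eta$ is chosen after $M$.
\item Meanwhile \eqref{directioncounttail} with $V_\alpha(m)=m$ gives $\PP(G_m^c)\leq Ce^{-c\eta^2m/K_L}$, which decays in $m$.
\end{itemize}
Start the blocks at $m_0=\lfloor n/2\rfloor$. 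The exceptional sum in \eqref{eqblockcovergood} is then geometric and at most $Ce^{-cn/K_L}$. This gives \eqref{eqhighdcovertail} for all $n\geq CK_L$, and your final paragraph then yields the consequences.
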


\begin{proof}
First suppose $\alpha\geq0$. Let $b_L:=\lceil c(p,d,1/2)L^2\rceil$, where $c(p,d,1/2)$ is from Proposition~\ref{propheatkerest}, and choose $\kappa$ so that $K_L\geq2b_L$. Given a finite stopping time $\tau$, define
\[
L_{\tau,y}:=\sum_{k=b_L}^{K_L}\1_{\{S_{\tau+k}=y\}}.
\]
Summing \eqref{eqERWtorimix} over adjacent pairs gives
\[
\EE(L_{\tau,y}\mid\FF_\tau)\geq\frac{K_L-b_L}{2L^d}\geq\frac{K_L}{4L^d}.
\]
On the event $L_{\tau,y}>0$, apply \eqref{eqERWtoriheatk} after the first visit to $y$ in this interval. The expected number of subsequent visits, including that first visit, is at most
\[
1+C\sum_{j=1}^{K_L}\left(j^{-d/2}+L^{-d}\right)
\leq C\frac{K_L}{L^d}.
\]
For $d=2$, the sum of $j^{-1}$ is $O(\log K_L)=O(\log L)$, which is of the same order as $K_L/L^2$. For $d\geq3$, the sum of $j^{-d/2}$ is bounded and $K_L/L^d$ is bounded away from zero. Conditioning at the first visit and comparing these two bounds therefore gives a constant $\rho=\rho(p,d)>0$ such that
\begin{equation}
\label{blockhitprobest}
\PP\bigl(y\in\{S_\tau,\ldots,S_{\tau+K_L}\}\mid\FF_\tau\bigr)\geq\rho.
\end{equation}
Lemma~\ref{lemblockcover} proves \eqref{eqhighdcovertail} in this case.

For $\alpha\leq0$, we can instead use entropy. Apply the preceding argument with $\alpha=0$ first. Repeating a fixed number of SRW blocks and increasing $\kappa$, we may arrange that simple random walk, from any starting point, hits each prescribed vertex within $K_L$ steps with probability at least $3/4$. Fix a small $\eta>0$ and, for $m\geq1$, set
\[
G_m:=\left\{\|\Delta_m\|\leq\frac{\eta m}{\sqrt{K_L}}\right\}\in\FF_m.
\]
If $m\geq j_0K_L$, Lemma~\ref{lemDPal0}(iii) bounds the conditional entropy of the next $K_L$ increments on $G_m$ by $C(\eta^2+j_0^{-2})$. Choose $\eta$ small and then $j_0$ large so that this is at most $1/8$. Pinsker's inequality (see e.g. \cite[Theorem 4.19]{boucheron2013concentration}) bounds the total variation distance by the square root of half the relative entropy. Hence, for every $y$,
\begin{equation}
\label{eqgoodblockhit}
\PP\bigl(y\in\{S_m,\ldots,S_{m+K_L}\}\mid\FF_m\bigr)\geq\frac12
\quad\text{on }G_m,\qquad m\geq j_0K_L.
\end{equation}
The concentration estimate in Lemma~\ref{empiconvunifprop} gives
\begin{equation}
\label{eqbadblockstart}
\PP(G_m^c)\leq C e^{-cm/K_L}.
\end{equation}

Take $n\geq C K_L$, put $m_0:=\lfloor n/2\rfloor$ and $m_j:=m_0+jK_L$, and let $J:=\lfloor(n-m_0)/K_L\rfloor$. Increase $C$ so that $m_0\geq j_0K_L$. Applying \eqref{eqblockcovergood} with $\ell=K_L$ and $\rho=1/2$, we obtain
\begin{align*}
\PP(\tau_{\rm cov}>n)
&\leq L^d2^{-J}+\sum_{j=0}^{J-1}\PP(G_{m_j}^c)\\
&\leq L^d2^{-J}+C\sum_{j=0}^{J-1}e^{-cm_j/K_L}
\leq CL^d e^{-cn/K_L}.
\end{align*}
The last sum is geometric, and $J\geq n/(2K_L)-1$. This proves \eqref{eqhighdcovertail} also when $\alpha\leq0$, including $p=0$.

Since $K_L\log L$ is comparable to $H(d,L)$, by \eqref{eqsummin1Nerho}, summing \eqref{eqhighdcovertail} gives the mean bound, and taking $n$ to be a sufficiently large multiple of $H(d,L)$ gives \eqref{uptailcovarbpow}. More generally, \eqref{eqhighdcovertail} implies
\[
\sup_{L\geq2}\PP\bigl(\tau_{\rm cov}>tH(d,L)\bigr)\leq C e^{-ct},\qquad t\geq C.
\]
Indeed, the factor $L^d$ is absorbed by the exponential once $t$ is large, because $H(d,L)/K_L\geq c\log L$. Integrating this last bound proves uniform integrability.
\end{proof}

\subsection{An improved upper bound in higher dimensions}

For an improved upper bound in dimensions $d\geq3$, we use blocks that are long compared with the mixing time but short compared with $L^d$. When the direction frequencies are close to uniform, the expected number of visits in such a block is at most $g_d(0)+o(1)$. The short-time return probabilities are close to those of simple random walk, and \eqref{eqERWtoriheatk} controls the remaining returns.

\begin{proposition}
\label{propsharphighdupper}
Suppose $d\geq3$ and $0\leq\alpha<1$. Then, for every $\varepsilon>0$,
\begin{equation}
\label{eqsharphighdupper}
\lim_{L\to\infty}\PP\left(\tau_{\rm cov}>(1+\varepsilon)t_L^\star\right)=0.
\end{equation}
Consequently,
\[
\limsup_{L\to\infty}\frac{\EE\tau_{\rm cov}}{t_L^\star}\leq1.
\]
\end{proposition}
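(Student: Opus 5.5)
We prove \eqref{eqsharphighdupper} by the block argument \eqref{eqblockcovergood}, now with blocks of intermediate length and a hitting probability that is sharp to first order. Fix $\varepsilon>0$ and choose block length $\ell=\ell_L$ with $L^2\log^2 L\ll\ell\ll L^d$; for instance $\ell:=\lfloor L^{d-1/2}\rfloor$ works since $d\geq3$. The goal is to show that, for every $y$, on a good event $G_m\in\FF_m$ and for all $m$ in the relevant range,
\[
\PP\bigl(y\in\{S_m,\ldots,S_{m+\ell}\}\mid\FF_m\bigr)\geq\rho_L:=\frac{(1-\varepsilon/4)\,\ell}{g_d(0)L^d}.
\]
Given this, take $m_0:=\lfloor\varepsilon t_L^\star/4\rfloor$ and $J:=\lfloor(1+\varepsilon/2)t_L^\star/\ell\rfloor$. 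Then $L^d(1-\rho_L)^J\leq L^d\exp(-(1+\varepsilon/8)\log L^d)\to0$. We also need $\sum_j\PP(G_{m_j}^c)\to0$, which is where the choice of good event comes in.

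\emph{Lower bound on the hitting probability.} We use the second-moment ratio: the probability that $y$ is hit equals $\EE(V\mid\FF_m)/\EE(V\mid\text{hit},\FF_m)$, where $V$ counts visits to $y$ during the block. We count only visits at times $k\in[m+b_L,m+\ell]$, with $b_L$ of order $L^2\log L$ so that $b_L\ll\ell$. Summing \eqref{eqERWtorimix} over adjacent pairs, with $\eta$ small, gives $\EE(V\mid\FF_m)\geq(1-\varepsilon/16)\ell/L^d$.

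For the denominator, condition at the first visit time $\tau$. The expected number of returns within time $\ell$ is then at most
\[
1+\sum_{j=1}^{\ell}\PP(S_{\tau+j}=y\mid\FF_\tau).
\]
Split this sum at a fixed large $j_1$. For $j>j_1$, we use the lifted walk: the term is at most the return probability of the free part, and \eqref{eqERWtoriheatk} bounds the tail by $C\sum_{j>j_1}j^{-d/2}+C\ell L^{-d}$, which is small because $\ell\ll L^d$. For $j\leq j_1$, we need $\PP(S_{\tau+j}=y\mid\FF_\tau)$ close to the simple-random-walk value $P^j(0,0)$. Lemma~\ref{lemDPal0}(iii) with Pinsker gives this with error $O(\sqrt{j_1\|\Delta_\tau\|^2/\tau^2+j_1^2/\tau^2})$. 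That error is small provided $\tau\geq m\gg1$ and $\|\Delta_\tau\|\leq\eta'\tau$. Hence the denominator is at most $g_d(0)+o(1)$, and $\rho_L$ follows.

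\emph{Main obstacle.} The difficulty is that $\tau$ is random and the frequency control must hold at $\tau$, not only at the block start $m$. We handle this by defining the good event through a uniform bound,
\[
G_m:=\Bigl\{\|\Delta_k\|\leq\eta' k\text{ for all }k\geq m\Bigr\},
\]
and lowering the conditional visit count by the probability of $G_m^c$. For $\alpha\leq 1/2$, Lemma~\ref{empiconvunifprop} together with a union bound over $k\geq m$ gives $\PP(G_m^c)\leq\sum_{k\geq m}Ce^{-ck^2/V_\alpha(k)}$. For $\alpha>1/2$, the normalizing scale is $V_\alpha(k)=k^{2\alpha}$ and the bound becomes $Ce^{-ck^{2-2\alpha}}$; since $\alpha<1$ this still decays, so $\PP(G_m^c)\to0$ superpolynomially as $m\to\infty$. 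With $m_0\asymp t_L^\star$, the sum over the $J=O(L^{1/2}\log L)$ blocks is therefore $o(1)$.

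We must also make the ratio argument rigorous under the good event. To do this, count only visits made while the frequencies are controlled, and bound the conditional expectation on $G_m^c$ crudely using \eqref{eqERWtoriheatk}. The upper bound for every $\alpha\in[0,1)$ comes from the short-range TV comparison, which uses only that $\|\Delta\|/k\to0$. It does not need $\alpha<1/2$, so we expect the argument to cover the critical and superdiffusive cases as well.

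\emph{Expectation bound.} This follows from \eqref{eqsharphighdupper} together with the uniform integrability of $\tau_{\rm cov}/H(d,L)$ from Proposition~\ref{prophighduppertail}, since $t_L^\star\asymp H(d,L)$.
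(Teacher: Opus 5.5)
Your architecture matches the paper's proof: $\FF_m$-measurable good events fed into \eqref{eqblockcovergood}, and the ratio $\EE(V\mid\FF_m)/\EE(V\mid V>0,\FF_m)$ with visits counted in $[m+b_L,m+\ell]$. The denominator is handled the same way, by conditioning at the first visit, comparing short returns with simple random walk, and using \eqref{eqERWtoriheatk} for the long returns with $\ell\ll L^d$.

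There is a gap, and it is exactly at the step you call the main obstacle. The event $G_m=\{\|\Delta_k\|\leq\eta'k\text{ for all }k\geq m\}$ depends on the whole future, so it is not in $\FF_m$. Your first paragraph requires $G_m\in\FF_m$, and so does \eqref{eqblockcovergood}. That inequality is obtained by conditioning successively on $\FF_{m_j}$ and applying a hitting bound that holds on an $\FF_{m_j}$-measurable event. An unconditional union bound on $\PP(G_m^c)$ does not substitute for this.

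Your proposed repair is to count only visits made while the frequencies are controlled and to bound the rest by \eqref{eqERWtoriheatk}. This lowers $\EE(V\mid\FF_m)$ by $C\,\PP(\text{control is lost during the block}\mid\FF_m)$. That loss must be $o(\ell/L^d)=o(L^{-1/2})$ \emph{conditionally on} $\FF_m$, on the event you condition on, and nothing in your sketch provides this.

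The missing observation is that control at the block start already suffices. Since $\|\Delta_{k+1}-\Delta_k\|\leq1$ and your own parameters give $\ell=L^{d-1/2}\ll m_0\asymp L^d\log L$, the adapted event $\{\|\Delta_m\|\leq\eta'm\}$ yields deterministically, for large $L$,
\[
\frac{\|\Delta_k\|}{k}\leq\frac{\|\Delta_m\|+(k-m)}{m}\leq\eta'+\frac{\ell}{m_0}\leq2\eta',\qquad m\leq k\leq m+\ell .
\]
In particular this holds at the random first-visit time $\tau$, which is all that Lemma~\ref{lemDPal0}(iii) needs. This is the paper's \eqref{eqsharpblockfrequencies}. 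It takes $G_m=\{\|\Delta_m\|\leq\eta'm\}\in\FF_m$ and bounds $\sum_j\PP(G_{m_j}^c)$ directly by \eqref{directioncounttail}. Since $m_0^2/V_\alpha(m_0)$ grows at least like a positive power of $L$ for every $\alpha<1$, this covers the critical and superdiffusive cases without any union bound over $k\geq m$.

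If you prefer to keep your $G_m$, the adapted event $\{\PP(G_m^c\mid\FF_m)\leq L^{-1}\}$ also works. By Markov's inequality its complement has probability at most $L\,\PP(G_m^c)$, which is still superpolynomially small. On this event the truncation loss is $O(L^{-1})=o(\ell/L^d)$.

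With either fix, the rest of your argument goes through, including the critical and superdiffusive cases and the passage to the mean via uniform integrability. Your Pinsker comparison through Lemma~\ref{lemDPal0}(iii) replaces the paper's stepwise coupling, and your block length $L^{d-1/2}$ replaces $L^2\log L$; both changes are harmless.
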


\begin{proof}
Write $N=L^d$ and $g=g_d(0)$, so that $t_L^\star=gN\log N$. Fix $\varepsilon>0$, and choose $\delta,\zeta>0$ and $\eta\in(0,1)$ sufficiently small that
\begin{equation}
\label{eqsharpupperparameters}
\frac{(1+\varepsilon-\delta)(1-\eta)g}{g+\zeta}>1.
\end{equation}
Put
\[
b_L:=\lceil c(p,d,\eta)L^2\rceil,\qquad
\ell_L:=\lceil L^2\log L\rceil,\qquad
m_0:=\lceil\delta t_L^\star\rceil,
\]
where $c(p,d,\eta)$ is from \eqref{eqERWtorimix}. Notice that $b_L=o(\ell_L)$, $\ell_L=o(N)$ and $\ell_L=o(m_0)$.

For a deterministic $m\geq m_0$ and a vertex $y$, define
\[
V_{m,y}:=\sum_{k=b_L}^{\ell_L}\1_{\{S_{m+k}=y\}}.
\]
Summing \eqref{eqERWtorimix} over disjoint adjacent pairs gives, for all sufficiently large $L$,
\begin{equation}
\label{eqsharpblockvisitslower}
\EE(V_{m,y}\mid\FF_m)
\geq\frac{2-\eta}{N}\left\lfloor\frac{\ell_L-b_L+1}{2}\right\rfloor
\geq\frac{(1-\eta)\ell_L}{N}.
\end{equation}

We next bound the expected number of visits after the first one. Fix an integer $R\geq1$ and a small $\eta'>0$, to be chosen below, and set
\[
G_m:=\{\|\Delta_m\|\leq\eta'm\}\in\FF_m.
\]
Since each increment of $\Delta$ has norm at most $1$, on $G_m$ we have the deterministic bound
\begin{equation}
\label{eqsharpblockfrequencies}
\frac{\|\Delta_{m+r}\|}{m+r}
\leq\eta'+\frac{\ell_L+R}{m_0},\qquad 0\leq r\leq\ell_L+R.
\end{equation}
For fixed $R,\eta'$ and sufficiently large $L$, the right-hand side is at most $2\eta'$. By \eqref{PXn1}, the total variation distance between each next direction law and the uniform law is then at most $C(d)\eta'$. This bound also applies after any stopping time $T$ taking values in $[m,m+\ell_L]$, for the next $R$ steps. Coupling successive directions with independent uniform directions, the conditional probability of any disagreement in the first $j$ steps is at most $C(d)j\eta'$. Consequently, for $L>R$, on $G_m$,
\[
\sum_{j=0}^{R}\PP(S_{T+j}=S_T\mid\FF_T)
\leq\sum_{j=0}^{R}\PP^{(0)}(\widetilde S_j=0)+C(d)R^2\eta'.
\]
Here $L>R$ ensures that a return modulo $L$ in at most $R$ nearest-neighbor steps is a return on $\ZZ^d$. For the longer returns, we use \eqref{eqERWtoriheatk}, and consequently, on $G_m$,
\begin{equation}
\label{eqsharpblockreturns}
\begin{aligned}
\EE\left(\left.\sum_{j=0}^{\ell_L}\1_{\{S_{T+j}=S_T\}}\,\right|\FF_T\right)
&\leq g+C(d)R^2\eta'
+C(p,d)\sum_{j>R}j^{-d/2}+\frac{C(p,d)\ell_L}{N}\\
&\leq g+\zeta.
\end{aligned}
\end{equation}
To obtain the last inequality, first choose $R$ large, using $d\geq3$, then choose $\eta'$ small, and finally take $L$ large. The choices are uniform in $m\geq m_0$ and in $T$.

Apply this estimate to the bounded stopping time
\[
T:=\inf\{k\geq m+b_L:S_k=y\}\wedge(m+\ell_L).
\]
The event $A:=\{V_{m,y}>0\}$ belongs to $\FF_T$, and $S_T=y$ on $A$. On that event, all visits counted by $V_{m,y}$ occur between $T$ and $m+\ell_L$. Conditioning first on $\FF_T$ in \eqref{eqsharpblockreturns} therefore gives
\[
\EE(V_{m,y}\mid\FF_m)\leq(g+\zeta)\PP(A\mid\FF_m)
\qquad\text{on }G_m.
\]
Together with \eqref{eqsharpblockvisitslower}, this yields
\begin{equation}
\label{eqsharpblockhit}
\PP\bigl(y\in\{S_m,\ldots,S_{m+\ell_L}\}\mid\FF_m\bigr)
\geq\rho_L:=\frac{(1-\eta)\ell_L}{(g+\zeta)N}
\qquad\text{on }G_m.
\end{equation}

Set
\[
n_L:=\lfloor(1+\varepsilon)t_L^\star\rfloor,\qquad
J:=\left\lfloor\frac{n_L-m_0}{\ell_L}\right\rfloor,\qquad
m_j:=m_0+j\ell_L\quad(0\leq j\leq J).
\]
All these block starts are comparable to $m_0$, and \eqref{directioncounttail} implies
\begin{equation}
\label{eqsharpbadblocks}
\sum_{j=0}^{J-1}\PP(G_{m_j}^c)
\leq C L^{d-2}\exp\left(-\frac{c m_0^2}{V_\alpha(m_0)}\right)
\longrightarrow0.
\end{equation}
Indeed, $J\leq C L^{d-2}$ and, for each fixed $\alpha<1$, $m_0^2/V_\alpha(m_0)$ grows faster than $\log L$.
Using \eqref{eqblockcovergood} with \eqref{eqsharpblockhit}, we obtain
\[
\PP(\tau_{\rm cov}>n_L)
\leq N(1-\rho_L)^J+\sum_{j=0}^{J-1}\PP(G_{m_j}^c)
\leq N e^{-J\rho_L}+o(1).
\]
Finally,
\[
\lim_{L\to\infty}\frac{J\rho_L}{\log N}
=\frac{(1+\varepsilon-\delta)(1-\eta)g}{g+\zeta}>1
\]
by the choice \eqref{eqsharpupperparameters}, proving \eqref{eqsharphighdupper}. The assertion about the mean follows from the uniform integrability in Proposition~\ref{prophighduppertail}.
\end{proof}

\subsection{Completion of the proof}

We recall some known results for the SRW. With the normalization $t_L^\star$ defined in \eqref{defHdLtLstar}, simple random walk satisfies $\tau_{\rm cov}/t_L^\star\to1$ in probability. We also use the following stronger estimates for unusually early covering, proved in dimension two by Comets, Gallesco, Popov and Vachkovskaia~\cite[Theorem 1.1]{MR3126579} and in higher dimensions by Li, Shi and Xu~\cite[Corollary 0.3, (0.7)]{li2025ldcovertime}. For every $\gamma\in(0,1)$ and sufficiently small $\varepsilon>0$, and all sufficiently large $L$,
\begin{equation}
\label{LDPcovSRWtorilt}
\PP^{(0)}(\tau_{\rm cov}\leq\gamma t_L^\star)
\leq
\begin{cases}
\exp\bigl(-L^{2(1-\sqrt\gamma)-\varepsilon}\bigr),&d=2,\\
\exp\bigl(-L^{d(1-\gamma)-\varepsilon}\bigr),&d\geq3.
\end{cases}
\end{equation}
For the upper tail, we only use the following weaker result (\cite[Theorem 1.3]{MR3126579}, \cite[(0.7)]{li2025ldcovertime}): For any $\gamma >1$,
\begin{equation}
\label{LDPcovSRWtoriut}
\lim_{L\to \infty}\PP^{(0)}(\tau_{\rm cov}\geq \gamma t_L^\star)=0, \quad d\geq 2.
\end{equation}

For the lower bounds, the strong SRW lower-tail estimate compensates for the entropy of the whole path. In the diffusive regime, the upper bound uses a later time interval on which the entropy remains bounded. At criticality in dimensions $d\geq3$, we use Proposition~\ref{propsharphighdupper} instead.

\begin{proof}[Proof of Theorem~\ref{thmhighd}]
We first prove the lower bounds used in both parts. Recall $P_{m,\ell}$ and $Q_\ell$ defined before Lemma~\ref{lemDPal0}. If $\alpha\leq1/2$, then, for every fixed $\gamma\in(0,1)$, Lemma~\ref{lemDPal0}(i), \eqref{relaentroine} and \eqref{LDPcovSRWtorilt} give
\begin{equation}
\label{eqdiffusivecoverlower}
\PP(\tau_{\rm cov}\leq\gamma t_L^\star)\leq C L^{-c}
\end{equation}
for some $c>0$. Indeed, writing $n_L=\lceil\gamma t_L^\star\rceil$, the numerator in \eqref{relaentroine} is $D(P_{0,n_L}\Vert Q_{n_L})+\log2$, which is $O(\log L)$ if $\alpha<1/2$ and $O((\log L)^2)$ if $\alpha=1/2$. Its denominator is bounded below by a positive power of $L$.

Suppose $\alpha>1/2$. In dimension two, choose $0<\gamma<4(1-\alpha)^2$. Then choose $\varepsilon>0$ so small that
\[
2(1-\sqrt\gamma)-\varepsilon>4\alpha-2.
\]
The numerator in the entropy bound is $O(L^{4\alpha-2}(\log L)^{4\alpha-2})$, while the denominator is at least $L^{2(1-\sqrt\gamma)-\varepsilon}$. Their ratio is therefore $O(L^{-c})$ for some $c>0$. In dimensions $d\geq3$, choose $0<\gamma<2(1-\alpha)$ and then $\varepsilon>0$ so that
\[
d(1-\gamma)-\varepsilon>d(2\alpha-1).
\]
Now the numerator is $O(L^{d(2\alpha-1)}(\log L)^{2\alpha-1})$, which gives the same conclusion. Thus, for every $p<1$ and $d\geq2$, an appropriate fixed $\gamma>0$ satisfies
\[
\PP(\tau_{\rm cov}\leq\gamma t_L^\star)\leq C L^{-c}.
\]

To prove (i), first suppose that $\alpha<1/2$.
For the upper bound, fix $\varepsilon>0$ and set
\[
n_L:=\lfloor(1+\varepsilon)t_L^\star\rfloor,\qquad
m_L:=\lfloor\varepsilon t_L^\star/2\rfloor,\qquad
\ell_L:=n_L-m_L.
\]
Let $A_L$ be the event that the partial sums of a block of $\ell_L$ increments, including the initial position, do not cover $\ZZ_L^d$. Observe that $\{\tau_{\rm cov}>n_L\}$ is contained in this event for the increment block starting at time $m_L$. Indeed, if that block covers the torus, then so does the full trajectory. Moreover, translation invariance and \eqref{LDPcovSRWtoriut} imply
\[
Q_{\ell_L}(A_L)=\PP^{(0)}(\tau_{\rm cov}>\ell_L)\longrightarrow0,
\]
since $\ell_L/t_L^\star\to1+\varepsilon/2$. Lemma~\ref{lemDPal0}(ii) gives
\[
D(P_{m_L,\ell_L}\Vert Q_{\ell_L})\leq C\log\left(1+\frac{\ell_L}{m_L}\right)\leq C(p,d,\varepsilon)
\]
for all sufficiently large $L$. Applying \eqref{relaentroine} to this block event yields
\[
\PP(\tau_{\rm cov}>n_L)
\leq\frac{C(p,d,\varepsilon)+\log2}{\log(1/Q_{\ell_L}(A_L))}\longrightarrow0.
\]

If $d\geq3$ and $\alpha=1/2$, the same upper-tail conclusion follows from Proposition~\ref{propsharphighdupper}. Together with \eqref{eqdiffusivecoverlower}, this proves convergence in probability to $1$ throughout the parameter range in (i). Since $t_L^\star$ is a fixed positive multiple of $H(d,L)$, Proposition~\ref{prophighduppertail} gives uniform integrability and hence convergence in $L^1$.

For (ii), the lower bounds proved above imply $\EE\tau_{\rm cov}\geq cH(d,L)$. The upper bound for the mean and the upper-tail estimate follow from Proposition~\ref{prophighduppertail}. Combining the polynomial lower-tail bound with \eqref{uptailcovarbpow} proves \eqref{tauconcend23}.
\end{proof}

\begin{question}[Cover-time asymptotics beyond the diffusive regime]
\label{queshighdcoverlimit}
Suppose that $d=2$ and $\alpha\in[1/2,1)$, or that $d\geq3$ and $\alpha\in(1/2,1)$. Does there exist a constant $c(p,d)\in(0,\infty)$ such that
\[
\lim_{L\to\infty}\frac{\tau_{\rm cov}}{H(d,L)}
=c(p,d)\qquad\text{in }L^1?
\]
If so, does it equal the corresponding constant for simple random walk?
\end{question}

\section{Ranges in \texorpdfstring{$\ZZ^d$}{Z\textasciicircum d}}
\label{secrange}

We prove Theorem~\ref{thmrange}. In dimension two we use the interval entropy estimate from Section~\ref{secHighD}. In higher dimensions, a last-visit decomposition reduces the problem to the convergence of the direction frequencies and a summable bound on return probabilities. The latter argument applies throughout $p<1$.

\subsection{The planar diffusive regime}

Assume that $d=2$ and $p<5/8$, so that $\alpha<1/2$. For $m\geq0$ and $\ell\geq1$, write
\[
r_{m,\ell}:=|\{\widetilde S_m,\ldots,\widetilde S_{m+\ell}\}|.
\]
This quantity depends only on the increments in the interval. Let $r_\ell^{(0)}$ denote the range size of planar simple random walk through time $\ell$. We use the classical estimates of Dvoretzky and Erd\H{o}s \cite[Theorems 1 and 3]{MR0047272},
\begin{equation}
\label{eqSRWplanarrange}
\frac{\log\ell}{\ell}r_\ell^{(0)}\longrightarrow\pi
\quad\text{in probability},\qquad
\EE r_\ell^{(0)}\sim\frac{\pi\ell}{\log\ell}.
\end{equation}

\begin{proof}[Proof of Theorem~\ref{thmrange}(i)]
    Fix $\varepsilon\in(0,1)$ and set $m=\lfloor\varepsilon n\rfloor$ and $\ell=n-m$. By \eqref{eqlateentropy}, $D(P_{m,\ell}\Vert Q_\ell)$ is bounded as $n\to\infty$. Applying \eqref{relaentroine} to the exceptional events in \eqref{eqSRWplanarrange} therefore gives
\[
\frac{\log n}{n}r_{m,\ell}\longrightarrow\pi(1-\varepsilon)
\quad\text{in probability}.
\]
Since $r_{m,\ell}\leq|\mathcal R_n|$, letting $\varepsilon\downarrow0$ shows that, for every $\eta>0$,
\begin{equation}
\label{eqplanarrangelower}
\PP\left(\frac{\log n}{n}|\mathcal R_n|<\pi-\eta\right)\longrightarrow0.
\end{equation}

For the expectation upper bound, we split the path into shorter blocks of length $\ell$. We choose the initial segment to contribute $o(n/\log n)$ and the total variation error on each block to be $o(1/\log n)$, while keeping $\log\ell\sim\log n$ so that the SRW constant is unchanged. For $n$ sufficiently large, set
\[
m_0=\left\lfloor\frac{n}{(\log n)^2}\right\rfloor,
\qquad
\ell=\left\lfloor\frac{n}{(\log n)^6}\right\rfloor,
\qquad J=\left\lfloor\frac{n-m_0}{\ell}\right\rfloor.
\]
For every complete block starting at $m=m_0+j\ell$, $0\leq j<J$, Pinsker's inequality and \eqref{eqlateentropy} imply
\[
\|P_{m,\ell}-Q_\ell\|_{\rm TV}
\leq C\sqrt{\ell/m_0}\leq\frac{C}{(\log n)^2}.
\]
The range size takes values between $1$ and $\ell+1$, so the total variation bound gives
\[
\EE r_{m,\ell}
\leq\EE r_\ell^{(0)}+\frac{C\ell}{(\log n)^2}.
\]
The complete range is the union of the block ranges, the initial segment and the final incomplete block. Thus no independence between blocks is needed to obtain
\[
\EE|\mathcal R_n|
\leq m_0+\ell+2+J\left(\EE r_\ell^{(0)}
+\frac{C\ell}{(\log n)^2}\right).
\]
Here $m_0+\ell=o(n/\log n)$, $J\ell\leq n$, and $\log\ell\sim\log n$. It follows from \eqref{eqSRWplanarrange} that
\begin{equation}
\label{eqplanarrangemean}
\limsup_{n\to\infty}\frac{\log n}{n}\EE|\mathcal R_n|\leq\pi.
\end{equation}
Put $Z_n=(\log n)|\mathcal R_n|/n$. Equation~\eqref{eqplanarrangelower} and $0\leq(\pi-Z_n)_+\leq\pi$ give $\EE(\pi-Z_n)_+\to0$. Together with \eqref{eqplanarrangemean} and the identity
\[
\EE|Z_n-\pi|=\EE Z_n-\pi+2\EE(\pi-Z_n)_+,
\]
this proves Theorem~\ref{thmrange}(i). 
\end{proof}

In dimension two, these arguments can be extended to show that $\EE|\mathcal R_n|=\Theta(n/\log n)$ for every fixed $p<1$. The lower bound follows from the self-intersection estimate obtained by summing Lemma~\ref{lemrangeheatkernel}, together with Cauchy--Schwarz and Jensen's inequality. For the upper bound, one uses shorter blocks and the entropy estimate. We do not pursue these estimates here.

\begin{question}[Growth of the range at and above criticality]
For $d=2$ and $p\in[5/8,1)$, is there a constant $r(p)>0$ such that
\[
\lim_{n\to\infty}\frac{\log n}{n}|\mathcal R_n|=r(p)
\qquad\text{in probability}?
\]
If so, does the convergence also hold in $L^1$, and how does $r(p)$ depend on $p$?
\end{question}

\subsection{Higher dimensions}

We first give the last-visit argument in a form that does not require stationarity.

\begin{lemma}
\label{lemrangetransient}
Let $(Z_n)_{n\geq0}$ be an adapted process in $\ZZ^d$, with filtration $(\mathcal G_n)_{n\geq0}$ and increments in a fixed finite set. Suppose that a probability measure $\mu$ on this set satisfies
\begin{equation}
\label{eqrangeblocklaw}
\lim_{n\to\infty}\|\mathcal L(Z_{n+1}-Z_n\mid\mathcal G_n)-\mu\|_{\rm TV}
=0\qquad\text{almost surely},
\end{equation}
and that the random walk with step distribution $\mu$ is transient. If
\begin{equation}
\label{eqrangesummability}
\sum_{k\geq1}\sup_{i\geq0}\PP(Z_{i+k}=Z_i)<\infty,
\end{equation}
then
\[
\lim_{n\to\infty}\frac{|\{Z_0,\ldots,Z_n\}|}{n}=\gamma_\mu
\quad\text{in }L^1,
\]
where $\gamma_\mu$ is the probability that the random walk with step distribution $\mu$, started at the origin, never returns to the origin after time zero.
\end{lemma}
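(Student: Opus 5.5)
We follow the classical last-visit argument of Dvoretzky--Erd\H{o}s, using \eqref{eqrangeblocklaw} in place of stationarity. Write $R_n:=|\{Z_0,\ldots,Z_n\}|$. Counting each site at its last visit before time $n$ gives
\[
R_n=\sum_{i=0}^{n}\1_{\{Z_j\neq Z_i\text{ for all }i<j\leq n\}}.
\]
Fix an integer $K\geq1$. The $i$-th indicator is at most the window indicator $\chi_i^K:=\1_{\{Z_{i+j}\neq Z_i,\ 1\leq j\leq K\}}$. It is at least $\chi_i^K$ minus the indicator of a return to $Z_i$ at some time in $(i+K,n]$. Hence
\[
\Bigl|\frac{R_n}{n}-\frac1n\sum_{i=0}^{n}\chi_i^K\Bigr|\leq\frac Kn+\frac1n\sum_{i=0}^{n}\sum_{k>K}\1_{\{Z_{i+k}=Z_i\}}.
\]
The $L^1$ norm of the last term is at most $2\sum_{k>K}\sup_i\PP(Z_{i+k}=Z_i)$, which by \eqref{eqrangesummability} tends to $0$ as $K\to\infty$, uniformly in $n$. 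It therefore suffices to show, for fixed $K$, that
\[
\frac1n\sum_{i\leq n}\chi_i^K\longrightarrow\gamma_\mu^K:=\PP^\mu(\text{no return to }0\text{ in }[1,K])
\quad\text{in }L^1,
\]
and then to use $\gamma_\mu^K\downarrow\gamma_\mu$. This monotone limit is where transience of $\mu$ enters, since it makes $\gamma_\mu$ the escape probability.

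For fixed $K$, we first identify the mean. The conditional law of the next $K$ increments given $\mathcal G_i$ is built step by step from the one-step conditional laws. Coupling each step with an independent $\mu$-step, the total variation distance between this conditional law and $\mu^{\otimes K}$ is at most
\[
\EE\Bigl(\sum_{j=0}^{K-1}\|\mathcal L(Z_{i+j+1}-Z_{i+j}\mid\mathcal G_{i+j})-\mu\|_{\rm TV}\,\Bigm|\,\mathcal G_i\Bigr).
\]
By \eqref{eqrangeblocklaw} and conditional dominated convergence (the integrand is bounded by $K$), this tends to $0$ almost surely as $i\to\infty$. Consequently $\EE(\chi_i^K\mid\mathcal G_i)\to\gamma_\mu^K$ almost surely, and by Ces\`aro averaging
\[
\frac1n\sum_{i\leq n}\EE(\chi_i^K\mid\mathcal G_i)\to\gamma_\mu^K\quad\text{almost surely and in }L^1.
\]

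The main point is to replace $\chi_i^K$ by its conditional mean. Split the indices into $K+1$ residue classes modulo $K+1$. Within a fixed class, set $D_i:=\chi_i^K-\EE(\chi_i^K\mid\mathcal G_i)$. Each $D_i$ is $\mathcal G_{i+K}$-measurable, and the next index in the class is $i+K+1$, so the $D_i$ form bounded martingale differences with respect to $(\mathcal G_i)$ along the class. Orthogonality gives $\EE\bigl(\sum D_i\bigr)^2\leq n$, so $n^{-1}\sum_i D_i\to0$ in $L^2$. Combining the three steps and letting $n\to\infty$ and then $K\to\infty$ yields the $L^1$ convergence $R_n/n\to\gamma_\mu$. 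The only delicate step is the uniform-in-$n$ control of long returns, and it is supplied exactly by the hypothesis \eqref{eqrangesummability}.
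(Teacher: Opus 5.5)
Your proof is correct and is essentially the paper's argument. You use last-visit counting truncated at a window of length $K$, a step-by-step coupling to compare $\EE(\chi_i^K\mid\mathcal G_i)$ with $\gamma_\mu^K$, and orthogonality of the centered window indicators at lags exceeding $K$; your residue classes replace the paper's direct bound $(2K-1)/n$ on the second moment. The summability hypothesis then controls returns after lag $K$.

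Two small remarks. First, you only need $\EE\bigl|\EE(\chi_i^K\mid\mathcal G_i)-\gamma_\mu^K\bigr|\to0$, which follows by taking expectations and bounded convergence. Your almost-sure claim is true, but with the varying $\sigma$-fields $\mathcal G_i$ it requires Hunt's lemma rather than the fixed-$\sigma$-field conditional dominated convergence theorem. Second, $\gamma_\mu^K\downarrow\gamma_\mu$ is simply continuity from above, so transience is not what makes that step work.
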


\begin{proof}
For $K\geq1$, let $q_K$ be the probability that a random walk started at zero with independent $\mu$-increments does not return to zero during its first $K$ steps. For $i\geq0$, set
\[
Y_i^{(K)}:=\1_{\{Z_{i+j}\ne Z_i,\ 1\leq j\leq K\}},
\qquad
\delta_i:=\|\mathcal L(Z_{i+1}-Z_i\mid\mathcal G_i)-\mu\|_{\rm TV}.
\]
By \eqref{eqrangeblocklaw}, $\delta_i\to0$ almost surely. Put
\[
a_i^{(K)}:=\EE(Y_i^{(K)}\mid\mathcal G_i),
\qquad D_i^{(K)}:=Y_i^{(K)}-a_i^{(K)}.
\]
Successively comparing the conditional distributions of the next $K$ increments with $\mu$ gives
\[
|a_i^{(K)}-q_K|
\leq\EE\left(\left.\sum_{r=0}^{K-1}\delta_{i+r}\,\right|\mathcal G_i\right).
\]
For completeness, one may couple each next increment with a fresh $\mu$-increment until the first disagreement. The conditional probability of a disagreement at a given step, while the earlier increments agree, is at most the corresponding $\delta_{i+r}$; summing these probabilities gives the bound. Since $\delta_i\leq1$, bounded convergence and the tower property give
\[
\EE|a_i^{(K)}-q_K|
\leq\sum_{r=0}^{K-1}\EE\delta_{i+r}\longrightarrow0
\qquad(i\to\infty).
\]

Moreover, $D_i^{(K)}$ is $\mathcal G_{i+K}$-measurable and $\EE(D_i^{(K)}\mid\mathcal G_i)=0$. If $j\geq i+K$, then
\[
\EE(D_i^{(K)}D_j^{(K)})
=\EE\bigl[D_i^{(K)}\EE(D_j^{(K)}\mid\mathcal G_j)\bigr]=0.
\]
Thus $\EE(D_i^{(K)}D_j^{(K)})=0$ whenever $|i-j|\geq K$. Since $|D_i^{(K)}|\leq1$, for $n\geq K$,
\[
\EE\left|\frac1n\sum_{i=0}^{n-K}D_i^{(K)}\right|^2
\leq\frac{2K-1}{n}.
\]
Combining this with Ces\`aro averaging of $a_i^{(K)}$ gives
\begin{equation}
\label{eqrangetruncatedlimit}
\frac1n\sum_{i=0}^{n-K}Y_i^{(K)}\longrightarrow q_K
\quad\text{in }L^1.
\end{equation}

Count each visited site at its last visit before time $n$:
\[
|\{Z_0,\ldots,Z_n\}|
=\sum_{i=0}^n\1_{\{Z_{i+j}\ne Z_i,\ 1\leq j\leq n-i\}}.
\]
Comparison with the truncated sum and a union bound give
\[
\EE\left|
\frac{|\{Z_0,\ldots,Z_n\}|}{n}
-\frac1n\sum_{i=0}^{n-K}Y_i^{(K)}
\right|
\leq\frac Kn+\sum_{k>K}\sup_{i\geq0}\PP(Z_{i+k}=Z_i).
\]
Indeed, the last $K$ indices contribute at most $K$, and an earlier index counted by $Y_i^{(K)}$ but not as a last visit must have a return at some lag $k>K$. First let $n\to\infty$ and use \eqref{eqrangetruncatedlimit}. Then let $K\to\infty$, using \eqref{eqrangesummability} and $q_K\downarrow\gamma_\mu$.
\end{proof}

The following estimate supplies \eqref{eqrangesummability} for the ERW. We retain independent directions at leaves of the recursive parent tree, which works for every $p<1$.

\begin{lemma}
\label{lemrangeheatkernel}
For every $d\geq2$ and $p\in[0,1)$, there exists $C=C(d,p)$ such that, for all $m\geq0$ and $k\geq1$,
\begin{equation}
\label{eqrangeheatkernel}
\sup_{x\in\ZZ^d}
\PP(\widetilde S_{m+k}-\widetilde S_m=x\mid\FF_m)
\leq Ck^{-d/2}\qquad\text{almost surely}.
\end{equation}
\end{lemma}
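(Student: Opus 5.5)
The plan is to reuse the conditional forest decomposition \eqref{eqconditionalforestdecomp}, this time on $\ZZ^d$ rather than on the torus. The decomposition is a statement about the lifted increments, so it holds before projection. The remark before the lemma suggests keeping independent directions at the leaves of the recursive parent tree; that handles every $p<1$, including $\alpha<0$.

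\textbf{Case $\alpha\geq0$.} Take $\tau=m$, a deterministic time. Conditionally on $\cH_{m,k}$ we can write
\[
\widetilde S_{m+k}-\widetilde S_m=\sum_{j\in\mathscr I(m,k)}g_j+R_{m,k},
\]
where $R_{m,k}$ is $\cH_{m,k}$-measurable and the $g_j$ are independent uniform directions. Hence, conditionally on $\cH_{m,k}$, the probability of hitting a given $x$ is at most
\[
\sup_z\PP^{(0)}\bigl(\widetilde S_{|\mathscr I(m,k)|}=z\bigr)\leq C(d)\,|\mathscr I(m,k)|^{-d/2},
\]
by the local CLT bound for simple random walk on $\ZZ^d$. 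On the event $|\mathscr I(m,k)|>(1-\alpha)k/8$ this is $\leq C k^{-d/2}$. By \eqref{eqestisover}, the complementary event has conditional probability at most $2e^{-(1-\alpha)k/18}\leq Ck^{-d/2}$. Averaging over $\cH_{m,k}$ given $\FF_m$ then proves \eqref{eqrangeheatkernel}. The only check needed is that $\mathscr I(m,k)$ and the free labels really are conditionally independent uniform given $\FF_m$ together with the forest structure. This is exactly what is recalled from \cite{peres2026transition}.

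\textbf{Case $\alpha<0$ (i.e. $p<1/(2d)$).} Here a different construction is needed, since the step-reinforced representation with Bernoulli$(\alpha)$ variables fails. At time $n+1$, pick $u_{n+1}$ uniform on $[n]$. With probability $p$ copy $X_{u_{n+1}}$; otherwise choose uniformly among the other $2d-1$ directions. I would build a parent tree in which each $j>m$ points to $u_j$. The aim is to find, among times in $(m,m+k]$, a set of size at least $ck$ with two properties: conditionally on everything else, their directions are independent, and each is drawn from a law that dominates a fixed multiple of a nondegenerate symmetric law.

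Concretely, call $j\in(m,m+k]$ a \emph{leaf} if no $j'\in(j,m+k]$ has $u_{j'}=j$. The number of such $j$ is at least $ck$ with exponentially high probability. Conditionally on the $u$'s, on all non-leaf directions and on the leaves' parents' directions, the leaf directions are independent. Each leaf direction is distributed as: equal to its parent's direction with probability $p$, and otherwise uniform among the remaining $2d-1$ directions. Each such law is a mixture of $\frac{2d(1-p)}{2d-1}\cdot u$ and a point mass, since $1-p\geq$ the required weight. Equivalently, with probability $\beta:=2d(1-p)/(2d-1)-$(something positive), the leaf is a fresh uniform direction.

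I therefore introduce auxiliary Bernoulli$(\beta)$ coins with $\beta>0$ marking \emph{fresh} leaves. The fresh leaves are independent uniform directions, and there are at least $c'k$ of them except on an event of exponentially small probability, by Chernoff. The local CLT bound then gives $Ck^{-d/2}$ exactly as in the first case. This version in fact works for every $p<1$, so I would present it uniformly.

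\textbf{Main obstacle.} The step I expect to be hardest is the claim that leaf directions are conditionally independent given the rest, despite depending on their parents. The point is that a leaf is nobody's parent, so its value influences no later step within the window. Conditioning on the past, all $u$'s, and the non-leaf directions therefore leaves the leaves independent, each with a law depending only on its parent's direction. Steps after $m+k$ are irrelevant. I would also verify the count of leaves: $\PP(j \text{ is a leaf}\mid \FF_m)\geq\prod_{j'=j+1}^{m+k}(1-1/(j'-1))$, which is bounded below for $j\geq m+k/2$ when $m\geq k$. The case $m<k$ needs similar care using $j\in(m+k/2,m+k]$. Concentration of the leaf count then follows from a bounded-differences argument in the independent $u$'s.
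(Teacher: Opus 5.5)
Your case $\alpha\geq0$ is correct: the forest decomposition concerns the lifted increments, so it holds on $\ZZ^d$, and combined with $\sup_z\PP^{(0)}(\widetilde S_h=z)\leq C(d)h^{-d/2}$ and \eqref{eqestisover} it gives \eqref{eqrangeheatkernel}. Your leaf count is also sound. The $u_j$ with $j>m$ are independent of $\FF_m$. The expected number of leaves in $(m,m+k]$ is $\sum_{j=m+1}^{m+k}\frac{j-1}{m+k-1}\geq k/2$. Changing one $u_j$ moves the count by at most one. This is a legitimate substitute for the paper's comparison tree. Your argument that leaf directions are conditionally independent given everything else matches the paper's.

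The gap is in extracting fresh uniform leaves from the leaf law
\[
\nu_s=p\delta_s+\frac{1-p}{2d-1}\sum_{e\neq s}\delta_e .
\]
The identity $\nu_s=\beta u+(1-\beta)\delta_s$ with $\beta=2d(1-p)/(2d-1)=1-\alpha$ is a genuine mixture only when $\alpha\geq0$. In the case you are treating, $\beta>1$ and the point mass receives the negative weight $\alpha$. The largest $\beta$ with $\nu_s\geq\beta u$ is $2d\min\{p,(1-p)/(2d-1)\}$, which equals $2dp$ when $\alpha<0$. That works for $0<p<1/(2d)$, but it vanishes at $p=0$, where $\nu_s(s)=0$.

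The lemma includes $p=0$, and so does Theorem~\ref{thmrange}(ii), which uses it. At $p=0$, no fixed nondegenerate law lies under all the $\nu_s$, since $\min_s\nu_s(e)=\nu_e(e)=0$ for every $e$. Likewise, any symmetric law dominated by $\nu_s$ vanishes at $\pm s$, so it is supported on the hyperplane $s^\perp$. Thus your claim that the construction ``works for every $p<1$'' fails exactly at this endpoint.

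The missing step is to stop looking for a uniform (or SRW) component and work with $\nu_s$ itself. On the event of at least $k/3$ future leaves, partition them by parent direction and keep a largest class, of size $h\geq k/(6d)$. Reveal the marks of the other leaves. The block increment is then a fixed vector plus an i.i.d.\ sum of $h$ variables with law $\nu_s$. For $d\geq2$, $\nu_s$ is not supported on an affine hyperplane, even at $p=0$: its support $\{-s\}\cup\{\pm e:e\perp s\}$ has full affine hull. Esseen's concentration inequality then gives $\sup_x\nu_s^{*h}(x)\leq C(d,p)h^{-d/2}$. This is the paper's argument, and it treats all $p\in[0,1)$ uniformly.
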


\begin{proof}
We construct the ERW on a recursive parent tree. The parent choices $u_j$, $j\geq2$, are independent, with $u_j$ uniform on $[j-1]$, and each $j$ is joined to $u_j$. We retain every parent edge, whereas the forest in Section~\ref{secnotation} retains only some of them. A vertex is a leaf at time $n$ if it has no children among $1,\ldots,n$.

Independently of the parents, take i.i.d.\ marks $\zeta_j$, $j\geq1$, uniform on $[0,1]$. Fix an ordering of the $2d$ directions. The $2d$ equal subintervals of $[0,1]$ determine $X_1$ from $\zeta_1$ in this order. For $j\geq2$, given $X_{u_j}=s$, set $X_j=s$ if $\zeta_j\leq p$. Otherwise, divide $[0,1]$ into $2d-1$ equal subintervals, and let $X_j$ be the direction corresponding to the subinterval containing $(\zeta_j-p)/(1-p)$, using the fixed order of the remaining directions. Values at interval endpoints may be assigned arbitrarily. Thus, given its parent direction $s$, a vertex has direction law
\[
\nu_s=p\delta_s+\frac{1-p}{2d-1}
\sum_{e\in\{\pm e_1,\ldots,\pm e_d\}\setminus\{s\}}\delta_e.
\]
Here $\delta_e$ is the Dirac measure at direction $e$. For $d\geq2$ and $p<1$, this law is not supported on an affine hyperplane. This remains true at $p=0$. Consequently, Esseen's concentration bound \cite[Theorem 6.2 and its corollary]{MR231419} applied with a ball of radius less than $1/2$, gives
\begin{equation}
\label{eqleafconvolution}
\sup_x\nu_s^{*h}(x)\leq C(d,p)h^{-d/2},\qquad h\geq1.
\end{equation}
The constant is uniform over the finitely many directions $s$.

We next bound the number of leaves among $m+1,\ldots,m+k$ in the tree through time $m+k$. It suffices to consider $k\geq2$. For $m\geq1$, put
\[
\widehat{\FF}_m:=\sigma(u_2,\ldots,u_m,\zeta_1,\ldots,\zeta_m)
\supseteq\FF_m,
\]
and let $\widehat{\FF}_0$ be the trivial $\sigma$-field. We first prove the estimate conditionally on $\widehat{\FF}_m$. Independently add variables $w_j$, uniform on $[j-1]$, for $2\leq j\leq k$. Define comparison parents by
\[
\widehat u_j:=
\begin{cases}
u_{m+j}-m,&u_{m+j}>m,\\
w_j,&u_{m+j}\leq m.
\end{cases}
\]
These variables are independent of $\widehat{\FF}_m$ and of one another. For each $\ell \in[j-1]$,
\[
\PP(\widehat u_j=\ell )
=\frac1{m+j-1}+\frac{m}{m+j-1}\frac1{j-1}
=\frac1{j-1}.
\]
Thus they define a fresh recursive tree on $[k]$. Every leaf $j$ of this tree corresponds to a leaf $m+j$ of the original tree: a later vertex with parent $m+j$ would have comparison parent $j$. The comparison root is not a leaf when $k\geq2$.

Let $V_k$ be the number of comparison leaves. For $2\leq j\leq k$, its probability of being a leaf is
\[
\prod_{h=j+1}^k\left(1-\frac1{h-1}\right)=\frac{j-1}{k-1},
\]
so $\EE V_k=k/2$. Also, $V_k$ equals $k$ minus the number of distinct parent labels. Changing one parent therefore changes $V_k$ by at most one. McDiarmid's inequality yields
\begin{equation}
\label{eqrangeleaftail}
\PP(V_k<k/3)\leq
\exp\left(-\frac{k^2}{18(k-1)}\right)
\leq e^{-k/18}.
\end{equation}
This bound also holds conditionally on $\widehat{\FF}_m$. 

Keeping $\widehat{\FF}_m$ revealed, now reveal all parent choices through time $m+k$ and the marks at the non-leaf vertices of the future block. All directions outside the future leaves, and the parent directions of those leaves, are then known. The unrevealed marks remain independent. Partition the future leaves according to their parent directions and choose a largest class (there might be more than one). On the event that there are at least $k/3$ future leaves, this class has size $h\geq k/(6d)$. Reveal also the marks at all leaves outside the chosen class. The increment sum over the block is now a fixed vector plus a sum of $h$ independent variables with a common law $\nu_s$ if the parent direction is $s$. The class was chosen without using its marks, and no leaf direction enters any other increment, so this conditional independence is preserved.

Apply \eqref{eqleafconvolution} on this event and bound the conditional point probabilities by one on its complement. Since the number of future leaves is at least $V_k$, \eqref{eqrangeleaftail} and the tower property give
\[
\sup_x\PP(\widetilde S_{m+k}-\widetilde S_m=x\mid\widehat{\FF}_m)
\leq Ck^{-d/2}+e^{-k/18}.
\]
Finally, for every $x\in\ZZ^d$,
\begin{align*}
\PP(\widetilde S_{m+k}-\widetilde S_m=x\mid\FF_m)
&=\EE\bigl[\PP(\widetilde S_{m+k}-\widetilde S_m=x\mid\widehat{\FF}_m)
\mid\FF_m\bigr]\\
&\leq Ck^{-d/2}+e^{-k/18}.
\end{align*}
Absorbing the exponential term and the case $k=1$ into the constant proves the lemma. 
\end{proof}

\begin{proof}[Proof of Theorem~\ref{thmrange}(ii)]
By \eqref{directioncounttail}, for every $\varepsilon>0$,
\[
\sum_{n\geq1}\PP(\|\Delta_n\|>\varepsilon n)<\infty,
\]
because $\alpha<1$. Hence $\Delta_n/n\to0$ almost surely, and \eqref{PXn1} proves \eqref{eqrangeblocklaw} for the uniform law on the $2d$ directions. If $d\geq3$, \eqref{eqrangeheatkernel} gives \eqref{eqrangesummability}. Lemma~\ref{lemrangetransient} now yields the result.
\end{proof}

\section{Cover-time bounds on finite groups}
\label{secfinitegroup}

The conditional hitting argument also gives cover-time bounds for more general processes. We first state a criterion in terms of conditional mixing, and then apply it to generalized step-reinforced random walks.

\subsection{A conditional mixing-to-cover estimate}

Let $Z=(Z_n)_{n\geq0}$ be a process on a finite set $E$ with $N:=|E|>1$, adapted to a filtration $(\mathcal A_n)_{n\geq0}$. For $n\geq0$, define
\begin{equation}
\label{defconditionalD}
D_Z(n):=\sup_{m\geq0}\operatorname*{ess\,sup}\max_{y\in E}
\left|N\PP(Z_{m+n}=y\mid\mathcal A_m)-1\right|.
\end{equation}
Since $Z_m$ is $\mathcal A_m$-measurable, $D_Z(0)=N-1$. The tower identity
\[
\PP(Z_{m+n+1}=y\mid\mathcal A_m)
=\EE\bigl[\PP(Z_{m+n+1}=y\mid\mathcal A_{m+1})\mid\mathcal A_m\bigr]
\]
gives $D_Z(n+1)\leq D_Z(n)$. Moreover, the bound in \eqref{defconditionalD} holds with $m$ replaced by any almost surely finite stopping time $\sigma$, by conditioning on the events $\{\sigma=m\}$.   

The argument in the proof of Proposition  \ref{prophighduppertail} can be used to prove the following. The main difference is that to upper bound $\EE(L_{\tau,y}\mid\FF_\tau)$, we shall use $D_Z$ instead of the heat kernel estimates \eqref{eqERWtoriheatk}.

\begin{proposition}
\label{propmixingcovergeneral}
Assume that
\[
T:=\inf\{n\geq1:D_Z(n)\leq1/2\}<\infty,
\qquad
A:=\sum_{n=0}^T D_Z(n).
\]
There is an absolute constant $C>0$ such that
\begin{equation}
\label{generalhitcovermean}
\max_{y\in E}\EE\tau_y\leq CA,
\qquad
\EE\tau_{\rm cov}\leq CA\log N,
\end{equation}
where $\tau_y:=\inf\{n\geq0:Z_n=y\}$ and $\tau_{\rm cov}:=\max_{y\in E}\tau_y$. Moreover, for every $r\geq0$,
\begin{equation}
\label{generalcoverexptail}
\PP\left(\tau_{\rm cov}>CA(\log N+r)\right)\leq e^{-r}.
\end{equation}
\end{proposition}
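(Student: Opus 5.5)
We follow the second-moment hitting argument from the proof of Proposition~\ref{prophighduppertail} and finish with Lemma~\ref{lemblockcover}. Fix a deterministic time $m$ (or a stopping time) and a target $y\in E$. Take the block length $K:=4\lceil A\rceil+T$, and count the visits to $y$ in the window $[m+T,m+K]$:
\[
L_{m,y}:=\sum_{k=T}^{K}\1_{\{Z_{m+k}=y\}}.
\]
The aim is a constant lower bound on $\PP(L_{m,y}>0\mid\mathcal A_m)$. Lemma~\ref{lemblockcover} then gives the three assertions with block length of order $A$, provided $T\le A$.

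First, $T\le A$ holds because $D_Z(n)>1/2$ for every $n<T$ and $D_Z(0)=N-1\ge1$. Hence $A\ge T/2$, and $A\ge1$, so $K\le CA$.

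For the first moment, $D_Z(k)\le1/2$ for $k\ge T$ by monotonicity. Therefore $\PP(Z_{m+k}=y\mid\mathcal A_m)\ge 1/(2N)$, and
\[
\EE(L_{m,y}\mid\mathcal A_m)\ge \frac{K-T+1}{2N}\ge \frac{2A}{N}.
\]

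For the repeat visits, let $\sigma$ be the first visit to $y$ in the window, capped at $m+K$. This is a stopping time, and $\{L_{m,y}>0\}\in\mathcal A_\sigma$ with $Z_\sigma=y$ on that event. The definition of $D_Z$ extends to stopping times, so for each $j\ge0$ we have $\PP(Z_{\sigma+j}=y\mid\mathcal A_\sigma)\le (1+D_Z(j))/N$. Summing over $j\le K$, and using $D_Z(j)\le 1/2$ for $j>T$, the expected number of visits in the window from $\sigma$ on is at most
\[
\frac1N\Bigl(\sum_{j=0}^{T}(1+D_Z(j))+\tfrac32 K\Bigr)\le \frac{C'A}{N}.
\]
The $j=0$ term equals $D_Z(0)+1=N$ divided by $N$, and it is the visit at $\sigma$ itself; this term is absorbed since $A\ge D_Z(0)=N-1$. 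Combining the two bounds,
\[
\frac{2A}{N}\le \EE(L_{m,y}\mid\mathcal A_m)\le \frac{C'A}{N}\,\PP(L_{m,y}>0\mid\mathcal A_m),
\]
so $\rho\ge 2/C'$, an absolute constant.

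Lemma~\ref{lemblockcover}, applied with this $K\le CA$ and $\rho$, now yields \eqref{generalhitcovermean} and \eqref{generalcoverexptail} directly. The main point requiring care is the repeat-visit bound. One has to check that the head term $\sum_{j\le T}D_Z(j)$ is exactly what $A$ controls, including the large $j=0$ contribution. One also has to check that applying the conditional bound at the random time $\sigma$ is legitimate, which follows from the stopping-time remark after \eqref{defconditionalD}. The rest is bookkeeping of absolute constants.
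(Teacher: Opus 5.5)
Your proposal is correct and is essentially the paper's argument. You count visits after time $T$, bound the conditional mean from below using $D_Z\le 1/2$ and from above by conditioning at the first visit (via the stopping-time form of $D_Z$), and then apply Lemma~\ref{lemblockcover}. The only difference is the window: you take one of length of order $A$ to get a constant hitting probability, whereas the paper uses $[T,2T-1]$ with hitting probability $cT/A$; in both cases block length over $\rho$ is of order $A$.

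One small slip: the claim $T\le A$ need not hold. For a slowly mixing symmetric two-state chain, $A\approx T/(2\log 2)$. However, the bound you actually derive, $A\ge (T+1)/2$, is all that $K\le CA$ requires.
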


\begin{proof}
Fix $m\geq0$ and $y\in E$, and set
\[
L_{m,y}:=\sum_{n=T}^{2T-1}\1_{\{Z_{m+n}=y\}}.
\]
Since $D_Z(n)\leq1/2$ for $n\geq T$,
\[
\EE(L_{m,y}\mid\mathcal A_m)\geq\frac{T}{2N}.
\]
Conditioning at the first visit to $y$ in this interval and using the stopping-time form of \eqref{defconditionalD}, we also have
\begin{align*}
\EE(L_{m,y}\mid\mathcal A_m)
&\leq\PP(L_{m,y}>0\mid\mathcal A_m)
\left(1+\frac1N\sum_{n=1}^T(1+D_Z(n))\right)\\
&\leq\frac{CA}{N}\PP(L_{m,y}>0\mid\mathcal A_m).
\end{align*}
Indeed, the first visit contributes $1$, and at most $T$ further times remain. For the last inequality, note that $D_Z(0)=N-1$ and $D_Z(n)>1/2$ for $1\leq n<T$, so $A\geq c(N+T)$. Comparing the two bounds shows that
\[
\PP\bigl(Z_{m+n}=y\text{ for some }1\leq n\leq2T
\mid\mathcal A_m\bigr)\geq\frac{cT}{A}.
\]
Lemma~\ref{lemblockcover}, with block length $2T$, proves both conclusions.
\end{proof}

\subsection{Generalized step-reinforced random walks}

Let $G$ be a finite group with identity $e_G$ and let $\mu$ be a probability measure on $G$. We use the generalized model of \cite[Definition 1 and Remark 1.1]{peres2026transition}, including transformations which depend on the past forest. Take mutually independent sequences $(g_n)_{n\geq1}$, $(\xi_n)_{n\geq2}$ and $(u_n)_{n\geq2}$, where the $g_n$ are i.i.d. with law $\mu$, the $\xi_n$ are i.i.d. Bernoulli with parameter $\widetilde\alpha\in[0,1)$, and the $u_n$ are independent and uniform on $\{1,\ldots,n-1\}$. Let $\Theta$ be independent of these three sequences, and let
\[
\mathcal T_n(g)=\mathscr T_n\bigl(\Theta,(\xi_j,u_j)_{2\leq j<n},g\bigr)
\]
for measurable maps $\mathscr T_n$. Set $X_1^\star=g_1$ and
\[
X_n^\star=\begin{cases}
g_n,&\xi_n=0,\\
\mathcal T_n(X_{u_n}^\star),&\xi_n=1,
\end{cases}
\qquad
S_0^\star=e_G,\qquad S_n^\star=X_1^\star\cdots X_n^\star.
\]
Let $\F_n^\star$ be the forest on $[n]$ with an edge between $j$ and $u_j$ exactly when $\xi_j=1$. A vertex is isolated if it has no incident edges in this forest. The transformations need not be independent of one another. They may depend on the past reinforcement variables, but are independent of the fresh labels $(g_n)$. Given all pairs $(\xi_j,u_j)$, $\Theta$ and the non-isolated labels up to a fixed time, the isolated labels therefore remain independent with law $\mu$.

Write $N:=|G|>1$ and $P_\mu(x,y):=\mu(x^{-1}y)$. Assume that $P_\mu$ is irreducible and aperiodic. Let $\FF_m^\star:=\sigma(S_0^\star,\ldots,S_m^\star)$ and let $D(n):=D_{S^\star}(n)$, as in \eqref{defconditionalD}, for this filtration. Put
\[
d_\mu(k):=\max_{x,y\in G}|NP_\mu^k(x,y)-1|.
\]
We consider the following conditions:
\begin{enumerate}[(I)]
\item $\mu$ is a class function, that is, $\mu(xy)=\mu(yx)$ for all $x,y\in G$.
\item $\mu$ is symmetric.
\item $\mu(e_G)\geq\mu_0$ for some $\mu_0\in(0,1/2]$.
\end{enumerate}
Under Condition (II), let $\lambda_*$ be the largest absolute value of an eigenvalue of $P_\mu$ other than $1$. Under Condition (III), define
\begin{equation}
\label{defphimu}
\phi_\mu(u):=\inf_{0<|B|\leq uN}
\frac{\sum_{x\in B,y\in B^c}P_\mu(x,y)}{|B|},
\qquad \frac1N\leq u\leq\frac12,
\end{equation}
and set $\phi_\mu(u)=\phi_\mu(1/2)$ for $u>1/2$.

\begin{proposition}
\label{propconditionalmixgroup}
Let $q_n:=\lfloor(1-\widetilde\alpha)n/8\rfloor$. The following estimates hold for every $n\geq1$.
\begin{enumerate}[(i)]
\item Under Condition (I),
\[
D(n)\leq d_\mu(q_n)+2N e^{-(1-\widetilde\alpha)n/18}.
\]
\item Under Condition (II),
\[
D(n)\leq N\lambda_*^{q_n}+2N e^{-(1-\widetilde\alpha)n/18},
\]
with the convention $0^0=1$.
\item Under Condition (III), there exists $C(\mu_0)>0$ such that, for every $\varepsilon\in(0,N-1]$,
\begin{equation}
\label{eqDnisoprofile}
D(n)\leq\varepsilon
\quad\text{whenever}\quad
n\geq\frac{C(\mu_0)}{1-\widetilde\alpha}
\int_{4/N}^{8/\varepsilon}\frac{du}{u\phi_\mu(u)^2}.
\end{equation}
For $\varepsilon>N-1$, the bound follows from $D(n)\leq N-1$.
\end{enumerate}
\end{proposition}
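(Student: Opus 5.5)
We would follow the proof of Proposition~\ref{propheatkerest}, using the conditional forest decomposition in its group form. Fix $m\geq0$, a time $n\geq1$, and condition on $\FF_m^\star$ together with all reinforcement variables $(\xi_j,u_j)$ up to time $m+n$, on $\Theta$, and on every label outside the isolated set $\mathscr I(m,n)$. Denote this $\sigma$-field by $\cH_{m,n}$. Since the isolated labels enter the product exactly once each and never through a transformation, the increment $(S_m^\star)^{-1}S_{m+n}^\star$ becomes a word in which these labels appear as independent $\mu$-distributed letters, interleaved with $\cH_{m,n}$-measurable group elements: $h_0 g_{j_1}h_1g_{j_2}\cdots g_{j_k}h_k$ with $k=|\mathscr I(m,n)|$. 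The forest estimate \eqref{eqestisover} holds verbatim here, because it concerns only $(\xi_j,u_j)$. Hence $k\geq q_n$ except on an event of conditional probability at most $2e^{-(1-\widetilde\alpha)n/18}$. On that exceptional event we bound $|N\PP(\cdot\mid\cH)-1|\leq N$, which gives the error term $2Ne^{-(1-\widetilde\alpha)n/18}$ after the tower property. It therefore remains to bound $\max_y|N\PP(h_0g_1h_1\cdots g_kh_k=y)-1|$ uniformly over the interleaving elements, for $k\geq q_n$. Extra letters can only help, since $|N\nu*\mu(y)-1|\leq\max_z|N\nu(z)-1|$, which follows by averaging.

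For (i), because $\mu$ is a class function, $\mu(h^{-1}\cdot h)=\mu$. We can therefore push every $h_i$ to the left by conjugation: $g h=h (h^{-1}gh)$, and $h^{-1}gh$ again has law $\mu$ and is independent of the other letters. The word then has the law of $h\cdot g_1'\cdots g_k'$, which is a left translate of $\mu^{*k}$. This gives $D(n)\leq d_\mu(q_n)+2Ne^{-\cdots}$, with monotonicity in $k$ supplied by the averaging remark above.

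For (ii), we work in $\ell^2(G)$ with right-multiplication operators. The law of the word is $\delta_{h_0}*\mu*\delta_{h_1}*\cdots$. Convolution by a Dirac mass is an isometry that preserves constants, while convolution by $\mu$ contracts the mean-zero subspace by $\lambda_*$, because $\mu$ is symmetric and the operator is self-adjoint. Thus $\|\nu_k-u\|_2\leq\lambda_*^k\|\delta-u\|_2\leq\lambda_*^k$. Passing from $\ell^2$ to $\ell^\infty$ via $N|\nu(y)-1/N|\leq N\|\nu-u\|_2$ gives $N\lambda_*^{q_n}$.

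For (iii), the interleaving Dirac masses are permutations, so they preserve the isolation profile of any set: the sets $Bh$ have the same size and the same edge boundary under the group-invariant kernel $P_\mu$, using $P_\mu(xh,yh)$ versus right translation. We would run the evolving-set or Morris--Peres argument for time-inhomogeneous chains of the form $\mu$-step then permutation. Condition (III) supplies the needed laziness $\mu(e_G)\geq\mu_0$. Evolving sets under a permutation are simply relabelled, so the Morris--Peres bound $\int_{4/N}^{8/\varepsilon}du/(u\phi_\mu(u)^2)$ on the $L^\infty$-relative distance applies with a constant $C(\mu_0)$. Requiring $q_n$ to exceed that integral then yields \eqref{eqDnisoprofile}. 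The exponential forest error must be absorbed: for $n$ above the threshold, it is small compared with $\varepsilon$. We would handle this by enlarging $C(\mu_0)$ and using the fact that the integral is at least of order $\log(N/\varepsilon)$, since $\phi\leq1$. This absorption, together with checking that the Morris--Peres proof survives the interleaved permutations, which requires tracking the $\ell^\infty$ bound for a product of kernels and applying the $L^2$ duality step to the reversed chain, is the main obstacle.
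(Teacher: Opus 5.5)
Your proposal is correct and follows essentially the same route as the paper. The conditional forest decomposition turns the block into $P_\mu$-steps interleaved with revealed right translations, and the forest tail bound controls the number of free steps. Then (i) follows from commutation with right translations (your conjugation argument), (ii) from the $\ell^2$ contraction by $\lambda_*$ on mean-zero functions followed by Cauchy--Schwarz, and (iii) from an evolving-set bound, applied to the forward and reversed products, that is uniform over the interleaved permutations; the exceptional error is absorbed using that the integral is at least $\log(2N/\varepsilon)$.

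One remark in (iii) needs correcting, though it is not used. Without Condition (I), a right translate $Bh$ need not have the same $P_\mu$-edge boundary as $B$: that boundary equals the boundary of $B$ for the conjugated measure $z\mapsto\mu(h^{-1}zh)$. What the argument actually needs is only that $\phi_\mu$ is an infimum over all sets of a given size, and that permutations preserve set size and the uniform measure. This is exactly why the estimate is uniform over the translation factors.
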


\begin{proof}
Fix a time interval $\{m+1,\ldots,m+n\}$. Here isolation is determined in the whole forest $\F_{m+n}^\star$. First reveal $\Theta$, all pairs $(\xi_j,u_j)_{2\leq j\leq m+n}$, all labels $g_j$ for $j\leq m$, and the labels at the non-isolated vertices of the block. Revealing the parent choices even at deleted edges makes all transformations up to time $m+n$ known. For each isolated vertex $j$ of $\F_{m+n}^\star$, we have
$X_j^\star=g_j$, and no other increment up to time $m+n$
depends on $g_j$. The increments at the non-isolated vertices
of the block are therefore determined by the revealed
information. Denote their values by $h_j$.
The unrevealed labels at the isolated vertices remain
independent with law $\mu$. In other words, under this conditioning the transition kernels are
\[
K_j(x,y)=
\begin{cases}
\mu(x^{-1}y),&j\text{ is isolated in }\F_{m+n}^\star,\\
\1_{\{y=xh_j\}},&j\text{ is not isolated},
\end{cases}
\qquad m<j\leq m+n.
\]
The conditional endpoint law is therefore $(K_{m+1}\cdots K_{m+n})(S_m^\star,\cdot)$, as in \cite[Proposition 2.3]{peres2026transition}. We prove the kernel estimates under this conditioning and then take conditional expectations given $\FF_m^\star$. By \cite[Lemma 2.2]{peres2026transition}, the number $J_{m,n}$ of isolated vertices in the block satisfies
\[
\PP\left(J_{m,n}\leq\frac{(1-\widetilde\alpha)n}{8}
\,\middle|\,\FF_m^\star\right)
\leq2e^{-(1-\widetilde\alpha)n/18}.
\]

Under Condition (I), $P_\mu$ commutes with every right translation, so the relative $L^\infty$ distance on the complementary event is at most $d_\mu(q_n)$. Under Condition (II), right translations are isometries on the mean-zero subspace of $L^2$ of the uniform measure, while $P_\mu$ has norm $\lambda_*$ there. The product thus has norm at most $\lambda_*^{q_n}$, which gives the pointwise bound $N\lambda_*^{q_n}$ by Cauchy--Schwarz. In either case the exceptional event contributes at most $2N e^{-(1-\widetilde\alpha)n/18}$.

For (iii), put $J(\varepsilon):=\int_{4/N}^{8/\varepsilon}du/(u\phi_\mu(u)^2)$. Since $P_\mu$ is doubly stochastic, for every $B\subset G$,
\[
\sum_{x\in B,y\in B^c}P_\mu(x,y)
=\sum_{x\in B^c,y\in B}P_\mu(x,y).
\]
Thus the reversed kernel has the same conductance profile. Moreover, $e_G\in\operatorname{supp}\mu$, so irreducibility implies that $\operatorname{supp}\mu\,(\operatorname{supp}\mu)^{-1}$ generates $G$. The deterministic evolving-set estimate in \cite[Proposition 4.5 and the proof of Proposition 1.9]{Peres2026mix}, applied to the forward and reversed products, therefore bounds the relative $L^\infty$ distance of the conditional law by $\varepsilon/2$ whenever there are at least $C_0(\mu_0)J(\varepsilon)$ factors $P_\mu$. The estimate is uniform over the translation factors, so it applies to the conditional product above. Choosing $C(\mu_0)$ sufficiently large in \eqref{eqDnisoprofile} ensures that $q_n\geq C_0(\mu_0)J(\varepsilon)$. Finally, $\phi_\mu\leq1$ gives
\[
J(\varepsilon)\geq\log\frac{2N}{\varepsilon}.
\]
A further increase of $C(\mu_0)$ makes the exceptional contribution at most $\varepsilon/2$, proving (iii).
\end{proof}

\begin{corollary}
\label{corlazycover}
Suppose that $P_\mu$ is the lazy simple random walk kernel on $\ZZ_L^d$, $d\geq2$, and let $S^\star$ be a generalized step-reinforced random walk as above. There exists $C=C(\widetilde\alpha,d)>0$ such that
\[
\EE\tau_{\rm cov}\leq C H(d,L),
\qquad
\PP\left(\tau_{\rm cov}>C H(d,L)\right)\leq L^{-d},
\]
where $H(d,L)$ is the scale defined before Theorem~\ref{thmhighd}.
\end{corollary}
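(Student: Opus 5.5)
The plan is to apply Proposition~\ref{propmixingcovergeneral} with $Z=S^\star$ and $N=L^d$. It then suffices to show that $A=\sum_{n=0}^T D(n)\leq C(\widetilde\alpha,d)\,H(d,L)/\log L$, since $\log N=d\log L$. Once this is shown, \eqref{generalhitcovermean} gives $\EE\tau_{\rm cov}\leq CA\log N\leq CH(d,L)$. Taking $r=d\log L$ in \eqref{generalcoverexptail} gives $\PP(\tau_{\rm cov}>CA(2d\log L))\leq L^{-d}$, and the threshold there is again $O(H(d,L))$.

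To bound $D(n)$ we use Proposition~\ref{propconditionalmixgroup}(i). The lazy simple random walk measure on the abelian group $\ZZ_L^d$ is a class function, so Condition (I) holds, and it is irreducible and aperiodic. Hence
\[
D(n)\leq d_\mu(q_n)+2L^de^{-(1-\widetilde\alpha)n/18},\qquad q_n=\lfloor(1-\widetilde\alpha)n/8\rfloor .
\]
For the lazy walk on the torus we need the standard relative pointwise bounds. The first is $N P_\mu^k(x,y)\leq C(d)(L^d k^{-d/2}+1)$, which is the lazy analogue of \eqref{heatkerZldest} and holds with no parity issue. The second is that $d_\mu(k)\leq 1/4$ once $k\geq c(d)L^2\log L$; I expect $k\geq c(d)L^2$ to suffice, but the $\log L$ is harmless. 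This follows from the spectral gap of order $L^{-2}$ via the $\ell^2$ bound $d_\mu(k)\leq N\lambda_*^k$, or, more cleanly, from the local CLT uniformly on the torus. Consequently $T\leq C L^2\log L$, the exponential term is at most $1/4$ once $n\geq C\log L$, and $d_\mu(q_n)\leq C(L^d q_n^{-d/2}+1)$ for all $n$.

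Summing, and using $D(0)=N-1$ and $D(n)\leq N-1$ for small $n$,
\[
A\leq L^d\Bigl(1+C\sum_{n=1}^{T}n^{-d/2}\Bigr)+C\,T+C L^d\sum_{n}e^{-cn}.
\]
For $d=2$ this gives $A\leq CL^2\log T+CL^2\log L\leq CL^2\log L=CH(2,L)/\log L$. For $d\geq3$ the series converges, so $A\leq C(L^d+L^2\log L)\leq CL^d=CH(d,L)/\log L$.

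The main obstacle is the relative $L^\infty$ heat-kernel estimate for the lazy torus walk: both the uniform bound $NP^k\leq C(L^dk^{-d/2}+1)$ and the existence of a mixing threshold $T=O(L^2\log L)$. Appendix~\ref{appsrw} proves the corresponding estimates for simple random walk. I would deduce the lazy case from it by writing the lazy kernel as a binomial mixture of simple random walk powers, which also removes the parity issue. Alternatively, the bound can be obtained from Proposition~\ref{propconditionalmixgroup}(iii), since the torus has isoperimetric profile $\phi_\mu(u)\asymp (uL^d)^{-1/d}$; this gives $D(n)\leq\varepsilon$ for $n\gtrsim L^2\varepsilon^{-2/d}$, and summing over $\varepsilon$ recovers the same bound on $A$.
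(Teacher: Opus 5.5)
Your proposal is correct and follows essentially the paper's route: Proposition~\ref{propconditionalmixgroup}(i) bounds $D(n)$ by $d_\mu(q_n)$ plus the exponential error term, and Proposition~\ref{propmixingcovergeneral} with $r=\log N$ then gives both conclusions. The one difference is that you get the mixing threshold from the spectral gap, so $T=O(L^2\log L)$, whereas the paper reads $T=O(L^2)$ off the product Fourier bound $d_\mu(k)\le(1+2\sum_j e^{-ckj^2/L^2})^d-1$. As you note, the extra $O(T)$ contribution to $A$ is absorbed when $d=2$ (into $L^2\log L$) and when $d\ge3$ (into $L^d$).
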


\begin{proof}
The torus is abelian, so Proposition~\ref{propconditionalmixgroup}(i) applies. For the lazy simple random walk, Fourier inversion and $1-\cos u\geq2u^2/\pi^2$ for $|u|\leq\pi$ give
\[
d_\mu(k)\leq
\left(1+2\sum_{j=1}^\infty e^{-ckj^2/L^2}\right)^d-1,
\qquad k\geq1.
\]
For $k\leq L^2$, comparison with a Gaussian integral bounds the right-hand side by $CL^d k^{-d/2}$, see e.g. Lemma \ref{lemgaussum} below. For $k\geq L^2$, summing the exponentially decreasing terms bounds it by $Ce^{-ck/L^2}$. Combining these estimates with Proposition~\ref{propconditionalmixgroup}(i), and adjusting constants depending on $\widetilde\alpha,d$, yields
\[
D(n)\leq\begin{cases}
CL^d(n+1)^{-d/2},&0\leq n\leq L^2,\\
Ce^{-cn/L^2},&n\geq L^2.
\end{cases}
\]
Here the term $2L^d e^{-(1-\widetilde\alpha)n/18}$ is absorbed into the displayed bounds. With $T$ and $A$ as in Proposition \ref{propmixingcovergeneral}, it follows that $T\leq CL^2$ and
\[
A=\sum_{n=0}^T D(n)\leq
\begin{cases}
CL^2\log L,&d=2,\\
CL^d,&d\geq3.
\end{cases}
\]
The difference between the two cases comes from summing $(n+1)^{-d/2}$ up to $L^2$. Proposition~\ref{propmixingcovergeneral}, with $r=\log N$ in its tail bound, proves the result.
\end{proof}

\appendix
\section{Simple random walk estimates on the torus}
\label{appsrw}

\begin{lemma}
\label{lemgaussum}
There is an absolute constant $C>0$ such that, for every $u\in[-L/2,L/2]$ and $\kappa>0$,
\[
\sum_{z\in\ZZ}e^{-\kappa(u+Lz)^2}
\leq C\left(1+\frac1{L\sqrt\kappa}\right).
\]
\end{lemma}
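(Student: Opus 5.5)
The plan is to compare the periodized Gaussian sum with an integral, after separating off the one or two terms closest to the origin. Write $f(x):=e^{-\kappa x^2}$, which is even and nonincreasing in $|x|$. The points $x_z:=u+Lz$, $z\in\ZZ$, form an arithmetic progression with spacing $L$. Since $|u|\leq L/2$, at most two of them, namely $z=0$ and possibly one neighbour, lie in $[-L/2,L/2]$. Every other point satisfies $|x_z|\geq L/2$.

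We bound the terms near the origin trivially by $f\leq1$, which contributes at most $2$. For the remaining points with $x_z\geq L/2$, we use monotonicity: each term satisfies $f(x_z)\leq\frac1L\int_{x_z-L}^{x_z}f(x)\,dx$, because $f$ is decreasing on $[0,\infty)$ and $x_z-L\geq -L/2$. This requires a little care at the left endpoint. The cleaner version is to take only points with $x_z\geq L$, or to use the interval $[x_z-L,x_z]\cap[0,\infty)$. Points with $L/2\leq x_z<L$ are at most one more, and each is bounded by $1$.

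The intervals $[x_z-L,x_z]$ for distinct $z$ are disjoint up to endpoints. Summing therefore gives at most $\frac1L\int_0^\infty f=\frac{\sqrt\pi}{2L\sqrt\kappa}$. The negative side is handled symmetrically. The total is then at most $C\bigl(1+\frac1{L\sqrt\kappa}\bigr)$ with an absolute $C$, for instance $C=4+\sqrt\pi$.

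There is no real obstacle. The only point needing attention is the bookkeeping that keeps each integration interval on the side where $f$ is monotone. This is handled by discarding a bounded number of terms nearest the origin, which is also where the additive constant $1$ comes from.
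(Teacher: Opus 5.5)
Your argument is correct and is essentially the paper's. The paper first uses $|u+Lz|\geq L(|z|-1/2)$ for $z\neq0$, so only the $z=0$ term is bounded by $1$, and then compares $\sum_{z\geq1}e^{-\kappa L^2(z-1/2)^2}$ with $2\int_0^\infty e^{-\kappa L^2x^2}\,dx$ by monotonicity; this is the same sum-versus-integral comparison you carry out after discarding a bounded number of terms near the origin, and your hedging at the left endpoint is unnecessary, since $f$ is even and $|x|\leq x_z$ on $[x_z-L,x_z]$ when $x_z\geq L/2$, so your first comparison already holds at the cost of integrating over $[-L/2,\infty)$ instead of $[0,\infty)$.
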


\begin{proof}
For $z\geq1$, $|u+Lz|\geq L(z-1/2)$, and the same bound holds for $z\leq-1$ after replacing $z$ by $|z|$. Hence
\[
\sum_{z\in\ZZ}e^{-\kappa(u+Lz)^2}
\leq1+2\sum_{z=1}^\infty e^{-\kappa L^2(z-1/2)^2}.
\]
Since $x\mapsto e^{-\kappa L^2x^2}$ is decreasing on $[0,\infty)$,
\[
\sum_{z=1}^\infty e^{-\kappa L^2(z-1/2)^2}
\leq2\int_0^\infty e^{-\kappa L^2x^2}\,dx
=\frac{\sqrt\pi}{L\sqrt\kappa}.
\]
This proves the claim.
\end{proof}

\begin{proof}[Proof of Lemma \ref{lemSRWheatkerest}]
Let $\widetilde P$ be the transition kernel of simple random walk on $\ZZ^d$. The standard Gaussian upper bound for finite-range symmetric random walks, see Hebisch and Saloff-Coste \cite{Hebisch1993}, gives
\[
\widetilde P^n(\widetilde x,\widetilde y)
\leq \frac{C(d)}{n^{d/2}}
\exp\left(-\frac{c(d)\|\widetilde x-\widetilde y\|^2}{n}\right),
\qquad \widetilde x,\widetilde y\in\ZZ^d.
\]
Choose $u\in[-L/2,L/2]^d\cap\ZZ^d$ representing $y-x$ modulo $L$. Periodizing the preceding estimate and applying Lemma \ref{lemgaussum} coordinatewise, we obtain
\begin{align*}
P^n(x,y)
&\leq \frac{C(d)}{n^{d/2}}
\sum_{z\in\ZZ^d}\exp\left(-\frac{c(d)\|u+Lz\|^2}{n}\right)\\
&\leq \frac{C(d)}{n^{d/2}}
\left(1+\frac{\sqrt n}{L}\right)^d
\leq C(d)\left(n^{-d/2}+L^{-d}\right),
\end{align*}
which proves \eqref{heatkerZldest}.

It remains to prove the lower bound. Fix $\eta\in(0,1)$. Suppose first that $L$ is even and set
\[
A^n(x,y):=\frac{P^n(x,y)+P^{n+1}(x,y)}2,
\qquad
 d^{(\infty)}(n):=\max_{x,y\in\ZZ_L^d}|L^dA^n(x,y)-1|.
\]
By translation invariance it is enough to take $x=0$. Fourier inversion gives
\begin{equation}
\label{FourierinvA}
L^dA^n(0,y)-1
=\sum_{\ell\in\ZZ_L^d\setminus\{0\}}
\frac{1+\lambda_\ell}{2}\lambda_\ell^n\chi_\ell(-y),
\end{equation}
where
\[
\chi_\ell(y):=\exp(2\pi\mathrm i\,\ell\cdot y/L),
\qquad
\lambda_\ell:=\frac1d\sum_{j=1}^d
\cos\left(\frac{2\pi\ell_j}{L}\right).
\]
Let $w=(L/2,\ldots,L/2)$. Since $\lambda_{\ell+w}=-\lambda_\ell$, pairing $\ell$ and $\ell+w$ in \eqref{FourierinvA} yields
\begin{align*}
d^{(\infty)}(n)
&\leq\sum_{\substack{\ell\in\ZZ_L^d\setminus\{0\}\\ \lambda_\ell\geq0}}\lambda_\ell^n
\leq\sum_{\ell\in\ZZ_L^d\setminus\{0\}}e^{-n(1-\lambda_\ell)}\\
&=\left[\sum_{k=0}^{L-1}
\exp\left(-\frac nd\left(1-\cos\frac{2\pi k}{L}\right)\right)\right]^d-1.
\end{align*}
Using $1-\cos u\geq2u^2/\pi^2$ for $|u|\leq\pi$, we get
\[
d^{(\infty)}(n)
\leq\left(1+2\sum_{k=1}^{\infty}
 e^{-8nk^2/(dL^2)}\right)^d-1.
\]
The Gaussian integral bound shows that the right-hand side is at most $\eta/2$ whenever $n\geq c(d,\eta)L^2$, after increasing $c(d,\eta)$. Hence
\[
A^n(x,y)\geq\frac{1-\eta/2}{L^d},
\]
which is equivalent to \eqref{AnlowbdZLd}.

When $L$ is odd, Fourier inversion gives
\[
\max_{y\in\ZZ_L^d}|L^dP^n(0,y)-1|
\leq\sum_{\ell\in\ZZ_L^d\setminus\{0\}}|\lambda_\ell|^n.
\]
Represent each coordinate of $\ell$ by an integer in
$[-(L-1)/2,(L-1)/2]$. If $\lambda_\ell\geq0$, then
\[
1-|\lambda_\ell|=1-\lambda_\ell
\geq\frac{c(d)}{L^2}\sum_{j=1}^d\ell_j^2.
\]
If $\lambda_\ell<0$, put $r_j:=L/2-|\ell_j|\in[1/2,L/2]$. Then
\[
1-|\lambda_\ell|=1+\lambda_\ell
\geq\frac{c(d)}{L^2}\sum_{j=1}^d r_j^2.
\]
Consequently, the contribution of the non-negative eigenvalues is bounded by a $d$-dimensional Gaussian sum over $\ZZ^d\setminus\{0\}$, while the contribution of the negative eigenvalues is bounded by a Gaussian sum over the half-integer lattice $(\ZZ+1/2)^d$. Both sums can be made smaller than $\eta/4$ by taking $n\geq c(d,\eta)L^2$ with $c(d,\eta)$ sufficiently large. Hence
\[
\max_{x,y\in\ZZ_L^d}|L^dP^n(x,y)-1|\leq\frac\eta2,
\]
and therefore $P^n(x,y)\geq(1-\eta/2)/L^d$. This proves both \eqref{AnlowbdZLd} and the claimed one-step lower bound when $L$ is odd.
\end{proof}

\section{Acknowledgments}

The author would like to thank Yuval Peres for helpful comments on a first version of the manuscript. Shuo Qin is supported by the China Postdoctoral Science Foundation under Grant Numbers 2025M773086 and 2026T190815, and National Natural Science Foundation of China under Grant Number 12271284. 

\bibliographystyle{plain}
\bibliography{math_ref}

\end{document}